\newcommand{\IZ}{\mathbb Z}
\newcommand{\IN}{\mathbb N}
\newcommand{\IR}{\mathbb R}
\newcommand{\e}{\varepsilon}
\newcommand{\conv}{\mathrm{conv}}
\newcommand{\pr}{\mathrm{pr}}
\newcommand{\2}{\mathbf 2}
\newcommand{\supp}{\mathrm{supp}}
\newcommand{\w}{\omega}
\newcommand{\M}{\mathcal M}
\newcommand{\cov}{\mathrm{cov}}
\newcommand{\LLC}{\mathsf{ILC}}
\newcommand{\U}{\mathcal U}
\newcommand{\binfty}{{\infty\hskip-0.98em\infty}}
\newcommand{\Xiup}{\mathrm{\Xi}}
\newcommand{\Sigmaup}{\mathrm{\Sigma}}
\newtheorem{theorem}{Theorem}
\newtheorem{claimm}{Claim}
\newtheorem{lemma}{Lemma}
\newtheorem{proposition}{Proposition}
\newtheorem{problem}{Problem}
\theoremstyle{definition}
\newtheorem{definition}{Definition}
\title{Centerpole sets for colorings of Abelian groups}
\author{Taras Banakh \and  Ostap Chervak} 
\address{Ivan Franko National University of Lviv, Ukraine, and\newline
Uniwersytet Humanistyczno-Przyrodniczy Jana Kochanowskiego, Kielce, Poland}
\email{t.o.banakh@gmail.com, \; oschervak@gmail.com}
\keywords{Abelian topological group \and centerpole set \and coloring \and symmetric subset \and monochromatic subset}
\subjclass{05E15 \and 22B99}
\begin{document}

\begin{abstract}
A subset $C\subset G$ of a topological group $G$ is called {\em $k$-centerpole\/} if for each $k$-coloring of $G$ there is an unbounded monochromatic subset $G$, which is symmetric with respect to a point $c\in C$ in the sense that $S=cS^{-1}c$. By $c_k(G)$ we denote the smallest cardinality  $c_k(G)$ of a $k$-centerpole subset in $G$. We prove that $c_k(G)=c_k(\IZ^{m})$ if $G$ is a discrete abelian group of free rank $m\ge k$. Also we prove that 
$c_1(\IZ^{n+1})=1$, $c_2(\IZ^{n+2})=3$, $c_3(\IZ^{n+3})=6$, $8\le c_4(\IZ^{n+4})\le c_4(\IZ^4)=12$ for all $n\in\w$, and \ 
$\textstyle{\frac12(k^2+3k-4)\le c_k(\IZ^n)\le 2^k-1-\max\limits_{s\le k-2}\binom{k-1}{s-1}}$ \ for all \ $n\ge k\ge 4$.
\end{abstract}
\maketitle

\section{Introduction}

In \cite{BP} T.Banakh and I.Protasov proved that for any $k$-coloring $\chi:\IZ^k\to k=\{0,\dots,k{-}1\}$ of the abelian group $\IZ^k$ there is an infinite monochromatic subset $S\subset\IZ^k$ such that $S-c=c-S$ for some point $c\in\{0,1\}^k$. The equality $S-c=c-S$ means that the set $S$ is symmetric with respect to the point $c$. On the other hand, a suitable partition of $\IR^k$ into $k+1$ convex cones determines a Borel $(k+1)$-coloring of $\IR^k$  without unbounded monochromatic symmetric subsets. 
These two results motivate the following definition, cf. \cite{Ba}, \cite{BDR}.
 
\begin{definition}\label{d1} A subset $C$ of a topological group $G$ is called 
{\em $k$-centerpole\footnote{So, a centerpole set can be thought as a set of poles of central symmetries that detect unbounded monochromatic symmetric subsets.} for (Borel) colorings of $G$} if for any (Borel) $k$-coloring $\chi:G\to k$ of $G$ there is an unbounded monochromatic subset $S\subset G$, symmetric with respect to some point $c\in C$ in the sense that $Sc^{-1}=cS^{-1}$.

The smallest cardinality $|C|$ of such a $k$-centerpole set $C\subset G$ is denoted by $c_k(G)$ (resp. $c_k^B(G)$). If no $k$-centerpole set $C\subset G$ exists then we write $c_k(G)=\binfty$ (resp. $c_k^B(G)=\binfty$) and assume that $\binfty$ is greater than any cardinal that appears in our considerations. 
\end{definition}

Now we explain some terminology that appears in this definition. 
A subset $B$ of a topological group $G$ is called {\em totally bounded} if $B$ can be covered by finitely many left shifts of any neighborhood $U$ of the neutral element of $X$. In the opposite case $B$ is called {\em unbounded}. 

A cardinal number $k$ is identified with the set $\{\alpha:|\alpha|<\kappa\}$    of ordinals of smaller cardinality and endowed with the discrete topology. 

By a ({\em Borel\/}) {\em $k$-coloring} of a topological space $X$ we mean  a (Borel) function $\chi:X\to k$. A function $\chi:X\to k$ is {\em Borel\/} if  for every color $i\in k$ the set $\chi^{-1}(i)$ of points of color $i$ in $X$ is Borel. 

The definition of the numbers $c_k(G)$ and $c_k^B(G)$ implies that 
$$c_k^B(G)\le c_k(G)$$ for any topological group $G$ and any cardinal number $k$.
If the topological group $G$ is discrete, then each coloring of $G$ is Borel, so $c_k^B(G)=c_k(G)$ for all $k$. 
In general, the cardinal numbers $c_k(G)$ and $c_k^B(G)$ are different. For example, $c_\w^B(\IR^\w)=\w$ while $c_\w(\IR^\w)=\binfty$, see Theorem~\ref{t:exact}.

It follows from the definition that $c_k(G)$ and $c_k^B(G)$ considered as functions of $k$ and $G$ are non-decreasing with respect to $k$ and non-increasing with respect to $G$. More precisely, for a number $k\in\IN$, a topological group $G$ and its subgroup $H$ we have the inequalities
$$c_k(H)\ge c_k(G),\;\;c_k(G)\le c_{k+1}(G)\quad\mbox{and}\quad 
c_k^B(H)\ge c_k^B(G),\;\;c_k^B(G)\le c_{k+1}^B(G)
.$$
In the sequel we shall use these monotonicity properties of $c_k(G)$ and $c_k^B(G)$ without any special reference.

In this paper we investigate the problem of calculating the numbers $c_k(G)$ and $c_k^B(G)$ for an abelian topological group $G$  and show that in many cases this problem reduces to calculating the numbers $c_k(\IR^n\times\IZ^{m-n})$ and $c_k^B(\IR^n\times\IZ^{m-n})$ where $n=r_\IR(G)$ is the $\IR$-rank and $m=r_\IZ(G)$ is the $\IZ$-rank of the group $G$. 

For topological groups $G$ and $H$ the {\em $H$-rank} $r_H(G)$ of $G$ is defined as
$$r_H(G)=\sup\{k\in\w:H^k\hookrightarrow G\}$$where $H^k\hookrightarrow G$ means that $H^k$ is topologically isomorphic to a subgroup of the topological group $G$.
It is clear that $r_\IR(G)\le r_\IZ(G)$ for each topological group $G$.
\smallskip

It is interesting to remark that the $\IZ$-rank appears in the formula for calculating the value of the function $$\nu(G)=\min\{\kappa:c_k(G)=\binfty\}$$introduced and studied in \cite{Pro} and \cite{BP}. By \cite{BP}, for any discrete abelian group $G$
$$\nu(G)=\begin{cases}
\max\{|G[2]|,\log|G|\}&\mbox{if $G$ is uncountable or $G[2]$ is infinite},\\
r_\IZ(G)+1&\mbox{if $G$ is finitely generated},\\
r_\IZ(G)+2&\mbox{otherwise}.\\
\end{cases}
$$
Here $G[2]=\{x\in G:2x=0\}$ is the {\em Boolean subgroup} of $G$ and $\log|G|=\min\{\kappa:|G|\le 2^\kappa\}$.
\smallskip

A topological group $G$ is called {\em inductively locally compact} (briefly, an {\em $\LLC$-group}) if each finitely generated subgroup $H\subset G$ has locally compact closure in $G$.
The class of $\LLC$-groups includes all locally compact groups and all closed subgroups of topological vector spaces.

Our aim is to calculate the numbers $c_k(G)$ and $c_k^B(G)$ for an abelian $\LLC$-group.  First, let us exclude two cases in which these numbers can be found in a trivial way.

One of them happens if the number of colors is 1. In this case
$$c_1^B(G)=c_1(G)=\begin{cases}1&\mbox{if $G$ is not totally bounded},\\
\binfty&\mbox{if $G$ is totally bounded}.
\end{cases}
$$
The other trivial case happens if the  Boolean subgroup
$G[2]=\{x\in G:2x=0\}\subset G$ is unbounded in $G$. In this case, for each finite coloring $\chi:G\to k$ there is a color $i\in k$ such that the set $S=G[2]\cap\chi^{-1}(i)$ is unbounded. Since $S=-S$, we conclude that $S$ is a unbounded monochromatic symmetric subset with respect to $0$, which means that the singleton $\{0\}$ is $k$-centerpole in $G$ and thus 
$$\mbox{$c_k(G)=c_k^B(G)=1$ for all $k\in\IN$.}$$

It remains to calculate the values of the cardinal numbers 
$c_k(G)$ and $c_k^B(G)$ for $k\ge 2$ and an abelian topological group $G$ with totally bounded Boolean subgroup $G[2]$. 

The following theorem reduces this problem of calculation of $c_k(G)$ to the case of the group $\IR^n\oplus\IZ^{m-n}$ where $n=r_\IR(G)$ and $m=r_\IZ(G)$.

\begin{theorem}\label{t:ckG} Let $k\in\IN$ and $G$ be an  abelian  $\LLC$-group $G$ with totally bounded Boolean subgroup $G[2]$ and ranks $n=r_\IR(G)$ and $m=r_\IZ(G)$. Then
\begin{enumerate}
\item[\textup{(1)}] $c_k(G)=c_k(\IR^n\times\IZ^{m-n})$  if $k\le m$, and
\item[\textup{(2)}] $c_k(G)\ge\w$  if $k>m$.
\end{enumerate}
If the topologcal group $G$ is metrizable, then
\begin{enumerate}
\item[\textup{(3)}] $c_k^B(G)=c_k^B(\IR^n\times\IZ^{m-n})$ if $k\le m$, and
\smallskip
\item[\textup{(4)}] $c_k^B(G)\ge \w$ if $k>m$.
\end{enumerate}
\end{theorem}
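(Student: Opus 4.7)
The plan is to reduce $c_k(G)$ and $c_k^B(G)$ for an abelian $\LLC$-group $G$ with totally bounded Boolean subgroup to the ``model'' group $\IR^n\times\IZ^{m-n}$ by first proving a structural lemma: $G$ contains a closed subgroup $H$ topologically isomorphic to $\IR^n\times\IZ^{m-n}$ such that the quotient $G/H$ is totally bounded. To construct $H$, start from a topological copy of $\IR^n$ inside $G$ (existing by $r_\IR(G)=n$) and successively adjoin $m-n$ topologically independent discrete generators taken from a copy of $\IZ^m\hookrightarrow G$. At each stage the $\LLC$-hypothesis guarantees that the closure of the resulting finitely generated subgroup is locally compact and compactly generated, hence of the form $\IR^a\times\IZ^b\times K$ with $K$ compact by the structure theorem for LCA groups; the maximality of $r_\IR(G)=n$ together with total boundedness of $G[2]$ is used at each step to peel off the compact factors and push them into the residual totally bounded part.

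Granted the structural lemma, part (1) splits into two inequalities. The upper bound $c_k(G)\le c_k(\IR^n\times\IZ^{m-n})$ is immediate from the monotonicity $c_k(H)\ge c_k(G)$ for closed subgroups, stated in the introduction. For the lower bound I would argue by contrapositive: given a finite $C\subset G$ with $|C|<c_k(\IR^n\times\IZ^{m-n})$, project $C$ along the quotient map $G\to G/H$ to identify (after shifting by coset representatives) an effective set $C'\subset H$ of size $\le|C|$, pick a bad $k$-coloring $\chi'$ of $H\cong\IR^n\times\IZ^{m-n}$ witnessing that $C'$ is not $k$-centerpole, and extend $\chi'$ to a $k$-coloring $\chi$ of $G$ by composing with a retraction/selector along cosets of $H$. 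Because $G/H$ is totally bounded, any unbounded monochromatic $S\subset G$ must contain an unbounded subset lying in a single coset of $H$; its symmetry in $G$ around some $c\in C$ descends to a symmetry in $H$ around the $H$-component of $c$, contradicting badness of $\chi'$ and showing that $C$ is not $k$-centerpole in $G$.

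Parts (2) and (4), the case $k>m$, follow from the same extension construction, now using the fact recalled in the introduction via the formula for $\nu$: $c_k(\IZ^m)=\binfty\ge\w$ for every $k>m$. Thus for any finite $C\subset G$ one can find bad colorings on $\IZ^m\subset H$ and pull them back. Parts (3) and (4) concerning $c_k^B$ require an additional check that, when $G$ is metrizable, the selector along cosets of $H$ and the resulting extension $\chi$ can be chosen Borel; this is achieved via standard Borel-section theorems for continuous surjections onto metrizable groups, together with the fact that total boundedness is detected by countable bases of neighborhoods of the neutral element.

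The main obstacle is the structural lemma, specifically producing $H\cong\IR^n\times\IZ^{m-n}$ closed in $G$ with totally bounded quotient. The construction must jointly respect both ranks $r_\IR=n$ and $r_\IZ=m$ and requires an inductive stratification of $G$ by finitely generated subgroups, applying the compactly generated LCA structure theorem at each step while invoking total boundedness of $G[2]$ to suppress parasitic $2$-torsion coming from compact factors $K$. Once this subgroup is in place, the coloring-transfer arguments in both directions become essentially mechanical, so all the real work is concentrated in this preliminary structural decomposition.
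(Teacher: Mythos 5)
Your proof hinges on a structural lemma asserting that $G$ contains a closed subgroup $H\cong\IR^n\times\IZ^{m-n}$ with $G/H$ totally bounded, and this lemma is false. Take $G=\IQ$ with the discrete topology: it is an abelian $\LLC$-group, $G[2]=\{0\}$ is totally bounded, $n=0$ and $m=1$. Any subgroup $H\cong\IZ$ of $\IQ$ has infinite discrete quotient $\IQ/H$, which is not totally bounded (for a discrete group, totally bounded means finite, and $\IQ$ is not finitely generated). The same phenomenon occurs for any non-finitely-generated discrete torsion-free group, so the decomposition you want simply does not exist, and with it the entire lower-bound transfer collapses. Even granting such an $H$, two further steps are unjustified: an unbounded monochromatic $S\subset G$ with $G/H$ totally bounded need not meet any single coset of $H$ in an unbounded set, and ``the $H$-component of $c$'' is undefined unless $G$ splits as $H\oplus(\cdot)$, which you have not arranged.

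The paper's route avoids controlling the quotient altogether. The upper bound $c_k(G)\le c_k(\IR^n\times\IZ^{m-n})$ for $k\le m$ only needs an embedded copy of $\IR^n\times\IZ^{m-n}$ in (the closure of $\IR^n+\IZ^m$ inside) $G$, obtained from the $\LLC$ hypothesis and the structure theorem for compactly generated LCA groups; no condition on $G/H$ is required. For the lower bound the paper starts from an \emph{arbitrary} minimal $k$-centerpole set $C\subset G$ and proves (Lemma~\ref{l:sub}, Borel version Lemma~\ref{l:subB}) that $C$ remains $k$-centerpole in the subgroup $\langle C\rangle+G[2]$: one extends a bad coloring of this subgroup to all of $G$ by making each coset $x+2\langle C\rangle$ outside the subgroup monochromatic of a color different from that of $-x+2\langle C\rangle$, which forces any $c$-symmetric unbounded monochromatic set to live inside the subgroup. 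This is the key idea missing from your argument, and it is exactly what replaces the (unavailable) total boundedness of the quotient. One then passes from $\overline{\langle C\rangle}+K_2$ to $\IR^r\times\IZ^{s-r}$ by quotienting out a compact subgroup, and statements (2) and (4) fall out as the contrapositive of the lower bound combined with $c_k(\IR^n\times\IZ^{m-n})=\binfty$ for $k>m$, rather than by pulling back bad colorings as you propose.
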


Here we assume that $\w-\w=0$ and $\w-n=\w$ for each $n\in\w$.

Theorem~\ref{t:ckG} will be proved in Section~\ref{s:Pf:lc}. It reduces the problem of calculation of the numbers $c_k(G)$ and $c_k^B(G)$ to calculating these numbers for the groups $\IR^n\times\IZ^{m-n}$ where $n\le m$. The latter problem turned out to be highly non-trivial. In the following theorem we collect all the available information on the precise values of the numbers $c_k(\IR^n\times\IZ^m)$ and $c_k^B(\IR^n\times\IZ^m)$.

\begin{theorem}\label{t:exact} Let $k,n,m$ be cardinal numbers. 
\begin{enumerate}
\item[\textup{(1)}] If $n+m\ge 1$, then $c_1^B(\IR^n\times\IZ^{m})=c_1(\IR^n\times\IZ^m)=1$.
\item[\textup{(2)}] If $n+m\ge 2$, then $c_2^B(\IR^n\times\IZ^{m})=c_2(\IR^n\times\IZ^m)=3$.
\item[\textup{(3)}] If $n+m\ge 3$, then $c_3^B(\IR^n\times\IZ^{m})=c_3(\IR^n\times\IZ^m)=6$.
\item[\textup{(4)}] If $n+m=4$, then $c_4^B(\IR^n\times\IZ^{m})=c_4(\IR^n\times\IZ^m)=12$.
\smallskip
\item[\textup{(5)}] If $k\ge n+m+1<\w$, then $c_k^B(\IR^n\times\IZ^m)=\binfty$.
\item[\textup{(6)}] If $k\ge n+m+1$, then $c_k(\IR^n\times\IZ^m)=\binfty$.
\item[\textup{(7)}] If $n+m\ge \w$ and $\w\le k<\cov(\M)$, then $c_k^B(\IR^n\times\IZ^m)=\w$.
\end{enumerate}
\end{theorem}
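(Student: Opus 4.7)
The plan is to treat the seven items as three clusters: explicit upper bounds in~(1)--(4), transferred from $\IZ^k$ via the monotonicity of $c_k$ in the group; impossibility results in~(5)--(6), obtained through a simplicial fan coloring, together with matching sharp lower bounds for~(2)--(4); and the infinite-dimensional statement~(7), whose upper half is a Baire-category argument.

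For the upper bounds in~(1)--(4), since $\IZ^k$ embeds as a closed subgroup of $\IR^n\times\IZ^m$ whenever $n+m\ge k$, the inequality $c_k(H)\ge c_k(G)$ for $H\subset G$ reduces the task to the case $G=\IZ^k$. Part~(1) is immediate, as $\{0\}$ is centerpole thanks to the unbounded symmetric set $\IZ\times\{0\}\subset G$. For~(2)--(4) I would revisit the Banakh--Protasov proof that $\{0,1\}^k$ is $k$-centerpole in $\IZ^k$ and extract refined subsets of cardinalities $3$, $6$, $12$ (for $k=2,3,4$) that already suffice; pinning down the minimal useful subset of $\{0,1\}^k$ in each case is a combinatorial verification that becomes progressively more delicate with $k$.

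For the impossibility results~(5)--(6) the key construction is a simplicial fan coloring. Embed $G=\IR^n\times\IZ^m$ into $V=\IR^{n+m}$ and let $C_0,\ldots,C_{n+m}$ be the $n+m+1$ open convex cones obtained as the interiors of the normal cones at the origin of a regular simplex centered at~$0$; they partition $V\setminus\{0\}$. Color each point of $G$ by the index of the cone containing it. Any central reflection $x\mapsto 2c-x$ sends an open convex cone to a translate of its negative, and the negative of $C_i$ meets $C_i$ only in a bounded neighborhood of~$c$, so no unbounded monochromatic subset can be symmetric about any center, whence no $k$-centerpole set exists for $k\ge n+m+1$. The coloring is Borel, which yields~(5); padding with unused colors handles infinite $k$ in~(6). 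For the matching sharp lower bounds in~(2)--(4), one has to defeat each specific candidate centerpole set of subcritical size by an analogous but finer fan-type coloring that separates the candidate centers with carefully placed hyperplanes or affine cones.

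For part~(7), assume $n+m\ge\w$ and fix a countable dense subgroup $D\subset G$. I would show $D$ is $k$-centerpole for all Borel $k$-colorings whenever $\w\le k<\cov(\M)$: the hypothesis $k<\cov(\M)$ forces some color class $A=\chi^{-1}(i)$ to be non-meager, hence (being Borel) to have the Baire property, so a Pettis-style argument yields a neighborhood of the identity inside $A-A$, from which one inductively extracts an unbounded sequence in $A$ symmetric about a point of $D$. The matching lower bound $c_k^B\ge\w$ follows by running the fan construction inside a finite-dimensional direct summand containing any prescribed finite candidate. The main obstacle I anticipate is the combinatorial verification of the exact constants $6$ and $12$ in~(3) and~(4): proving sharpness there requires a case analysis of the centerpole candidates in $\IZ^3$ and $\IZ^4$ that does not obviously reduce to a uniform pattern.
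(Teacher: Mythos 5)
Your skeleton matches the paper for items (1), (5) and the upper half of (7) (the countable dense set, a non-meager Borel color class with the Baire property, and an unbounded non-meager symmetric intersection is essentially the paper's proof of (7)). But the entire substance of the theorem lives in the two places you defer to ``combinatorial verification,'' and what you do say there points the wrong way. For the upper bound $c_4(\IZ^4)\le 12$ you propose to extract a $12$-element subset of the cube $\{0,1\}^4$ from the Banakh--Protasov argument; the paper's $12$-element $4$-centerpole set is the sandwich $\Xiup^3_1\subset\{-1,0,1\}\times\2^3$, which is \emph{not} contained in any unit cube (affinely it is a tetrahedron $\Delta$ centered at $0$ together with $x-\Delta$ and $-x-\Delta$), and the proof that it is $4$-centerpole is not a finite check: it is a Borsuk--Ulam-type argument (an antipodal map $\partial K_0\to S^{k-1}$ manufactured from a hypothetical bad coloring cannot be null-homotopic), resting on the covering Lemma~\ref{perebor}. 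Since $c_4(\IZ^4)=12$ answers a previously posed open problem, no routine refinement of the cube argument will produce it.

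The lower bounds are equally out of reach of your fan colorings. The normal-cone coloring defeats \emph{every} candidate set only when the number of colors exceeds $n+m$; for subcritical $k$ the paper instead proves that every set of fewer than $t(\IR^k)$ points is $T$-shaped (covered by hyperplanes in general position satisfying a non-separation condition, Proposition~\ref{Tshape-char}) and that $T$-shaped sets admit bad Borel colorings, with $t(\IR^4)\ge 12$ requiring the long octahedron analysis of Lemma~\ref{l11}. Moreover, because $c_k$ is non-increasing in the group, a lower bound in $\IR^k$ gives nothing for $c_k^B(\IR^m)$ with $m>k$, so items (2) and (3) for large or infinite $n+m$ need the enlargement lemmas (a non-centerpole set in $X$, augmented by one or two points above a supporting hyperplane, stays non-centerpole in $X\oplus\IR$) together with the stabilization machinery; your proposal never explains how a coloring defeating $C$ inside a finite-dimensional summand is extended to the ambient group without creating new symmetric monochromatic sets. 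Finally, in (6) the nontrivial case is infinite $n+m$, not infinite $k$ (which padding indeed handles): there the paper invokes the Banakh--Protasov computation $\nu(G)=\log|G|$ for the discrete group, a set-theoretic argument with no analogue in the fan picture.
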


In the last item  by $\cov(\M)$ we denote the smallest cardinality of the cover of the real line by meager subsets. It is known that $\aleph_1\le\cov(\M)\le\mathfrak c$ and the equality $\cov(\M)=\mathfrak c$ is equivalent to the Martin Axiom for countable posets, see \cite[19.9]{JW}. 

The equality $c_4(\IZ^4)=12$ from the statement (4) of Theorem~\ref{t:exact} answers the problem of the calculation of $c_4(\IZ^4)$ posed in \cite{Ba} and then repeated in \cite[Problem 2.4]{BPOP}, \cite[Problem 12]{BVV}, and \cite[Question 4.5]{LvivSem}. 

Theorem~\ref{t:exact} presents all cases in which the exact values of the cardinals $c_k^B(\IR^n\times\IZ^{m-n})$ and $c_k(\IR^n\times\IZ^{m-n})$ are known. In the remaining cases we have some upper and lower bounds for these numbers.
 Because of the inequalities
$$c_k^B(\IR^m)\le c_k^B(\IR^n\times \IZ^{m-n})\le c_k(\IR^n\times\IZ^{m-n})\le c_k(\IZ^m),$$
we see that the upper bounds for the numbers $c_k^B(\IR^n\times\IZ^{m-n})$ and $c_k(\IR^n\times\IZ^{m-n})$ would follow from the upper bounds for the numbers $c_k(\IZ^m)$ while lower bounds from lower bounds on $c_k^B(\IR^m)$.

\begin{theorem}\label{t:bounds} For any numbers $k\in\IN$ and $n,m\in\IN\cup\{\w\}$, we get: 
\begin{enumerate}
\item[\textup{(1)}] $c_k(\IZ^m)\le 2^k-1-\max\limits_{s\le k-2}\binom{k-1}{s-1}$ if $k\le m$,
\item[\textup{(2)}] $c_k^B(\IR^k)\ge \frac12(k^2+3k-4)$ if $k\ge 4$,
\item[\textup{(3)}] $c_k^B(\IR^m)\ge k+4$ if $m\ge k\ge 4$,
\smallskip

\item[\textup{(4)}] $c_k^B(\IR^n)<c_{k+1}^B(\IR^{n+1})$ and $c_k(\IR^n)<c_{k+1}(\IR^{n+1})$ if $k\le n$;
\item[\textup{(5)}] $c_k^B(\IR^n\times \IZ^m)<c_{k+1}^B(\IR^n\times\IZ^{m+1})$ and  $c_k(\IR^n\times \IZ^m)<c_{k+1}(\IR^n\times\IZ^{m+1})$ if $k\le n+m$.
\end{enumerate}
\end{theorem}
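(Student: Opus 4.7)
The plan is to treat the five items in three groups: the upper bound (1) via an explicit construction, the lower bounds (2)–(3) via explicit Borel colorings with convex-cone structure, and the strict monotonicity (4)–(5) by lifting a witnessing coloring to one more dimension with one more color.

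For part (1), I would start from the standard $k$-centerpole set $C_0 = \{0,1\}^k \setminus \{\mathbf 0\}$ of size $2^k-1$, embedded into $\IZ^m$ via the first $k$ coordinates; this is known to be $k$-centerpole from \cite{BP}. To refine it, fix some $s^* \in \{1,\dots,k-2\}$ and remove from $C_0$ the $\binom{k-1}{s^*-1}$ vectors of weight $s^*$ whose first coordinate equals $1$. The proof would then refine the Banakh--Protasov inductive construction of an infinite monochromatic symmetric subset: whenever the inductive branching would force the midpoint into the discarded layer, one reroutes by exchanging one of the first $k$ coordinate moves for an auxiliary coordinate move in one of the extra $m-k$ directions. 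The main obstacle is bookkeeping the Hales--Jewett-style recursion tree to ensure that at every rerouting step at least one admissible alternative remains; this is where the hypothesis $k\le m$ is genuinely needed.

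For parts (2) and (3), the plan is to exhibit a Borel $k$-coloring of $\IR^k$ (resp.\ $\IR^m$) whose set of admissible symmetry centers is large. The construction partitions $\IR^k$ into finitely many closed polyhedral cones with common apex $0$, associated to a carefully chosen hyperplane arrangement, and assigns one of $k$ colors to each cone. Any unbounded monochromatic subset symmetric about a point $c$ must send the recession directions of its color class to themselves under the point reflection at $c$, which constrains $c$ to lie at an intersection of a small family of affine subspaces read off from the arrangement. Counting the forced intersection points should yield the lower bounds $\tfrac12(k^2+3k-4)$ in (2) and $k+4$ in (3). The main obstacle is designing the arrangement and its color assignment so that the resulting combinatorial count matches the claimed quadratic bound for $m=k$ and remains at least $k+4$ when $m>k$; the $k=4$ case (giving $12$) is a useful sanity check against Theorem~\ref{t:exact}(4).

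For parts (4) and (5), the plan is an inductive lifting argument. Starting from a (Borel) $k$-coloring $\chi$ of $G\in\{\IR^n,\IR^n\times\IZ^m\}$ whose minimum centerpole set $C$ realizes $c_k(G)$ (resp.\ $c_k^B(G)$), I would build a $(k{+}1)$-coloring $\tilde\chi$ of $G\times H$, with $H=\IR$ or $H=\IZ$ as appropriate, that uses the fresh color $k$ on a carefully chosen ``thin'' set $T\subset G\times H$ and agrees with $\chi\circ\pr_G$ on the complement. The thin set $T$ should be arranged so that every unbounded monochromatic symmetric subset of color $k$ has its center outside the copy of $C\times\{0\}$, while any unbounded monochromatic symmetric subset of an old color projects to one of $\chi$, so its center must project into $C$. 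The main obstacle is engineering $T$ simultaneously for the Borel and the general cases so that it forces at least one additional center without destroying the old ones, and so that the projection argument survives the passage to the coarser group $G\times\IZ$ in part (5); here the hypothesis $k\le n$ (resp.\ $k\le n+m$) is what guarantees that $c_k(G)<\binfty$, ensuring the base of the induction is nontrivial.
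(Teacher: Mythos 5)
Your plan does not match the paper's arguments, and at several points it rests on ideas that either are too vague to assess or would not work. For item (1) the paper does not refine the Banakh--Protasov recursion at all: the bound comes from Theorem~\ref{sandwich}, which shows by a topological argument (a covering lemma for $\Sigmaup_0$-sets plus a Borsuk--Ulam/antipodal-map obstruction on the boundary of a large cube) that the $\binom{k-1}{s-1}$-sandwich $\Xiup^{k-1}_{s-1}$, a set of size $2^k-1-\binom{k-1}{s-1}$ living in three parallel slices of $\IZ^k$, is $k$-centerpole already in $\IZ^k$. In particular your claim that the extra $m-k$ coordinates are ``genuinely needed'' for rerouting is wrong: the bound is attained with $m=k$, and the hypothesis $k\le m$ only ensures $\IZ^k$ embeds in $\IZ^m$. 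Your proposed set ($\{0,1\}^k$ minus a weight layer) is not the sandwich, and no mechanism is given for why the pruned recursion tree would still terminate; this part is a gap, not a proof. For items (2)--(3) the paper does not count intersection points of a cone arrangement. Item (2) is the inequality $c_k^B(\IR^k)\ge t(\IR^k)$ (via the fact that $T$-shaped sets are not $k$-centerpole, quoted from \cite{Ba}) combined with the purely geometric induction $t(\IR^n)\ge t(\IR^{n-1})+n+1$, whose base case $t(\IR^4)\ge 12$ is a long separate argument (Lemma~\ref{l11}, involving octahedra). Item (3) is then deduced from the strict monotonicity of item (4) together with the base case $c_4^B(\IR^m)\ge 8$ (Lemma~\ref{l:c4>8}, proved by the ``enlarging non-centerpole sets'' lemmas). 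Your sketch gives no indication of how a cone coloring would force exactly the quadratic count $\tfrac12(k^2+3k-4)$, and I do not see how it could: a single coloring constrains the possible centers, but the lower bound requires defeating \emph{every} candidate set of the given cardinality, which is what the $T$-shaped classification accomplishes.

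For items (4)--(5) your setup is also off target. There is no ``coloring whose minimum centerpole set realizes $c_k(G)$''; one must show that \emph{every} subset $C\subset \IR^{n+1}$ (resp.\ $\IR^n\times\IZ^{m+1}$) with $|C|\le c_k^B(\IR^n)$ fails to be $(k+1)$-centerpole. The paper's key device, absent from your plan, is to pick two distinct points $a,b\in C$, project along the line $L=\IR\cdot(a-b)$ onto a complementary subgroup $H$, and observe that $|\pr(C)|<|C|\le c_k^B(H)$ because $\pr(a)=\pr(b)$; this strict drop in cardinality is exactly what produces the strict inequality. One then takes a coloring of $H$ defeating $\pr(C)$, pulls it back to the half-space $H+L^+_\gamma$ containing $C$, and paints the complementary open half-space with the fresh color $k$; a color-$k$ set symmetric about a point of $C$ is automatically bounded, and an old-color symmetric set projects to a symmetric set of $H$ defeating $\pr(C)$. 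Your ``thin set $T$'' proposal neither identifies where the extra $+1$ comes from nor explains how old-color symmetric sets are controlled. In case (5) there is the additional subtlety, handled in the paper by a case analysis and the structure theorem for locally compact abelian groups, that the quotient of $\IR^n\times\IZ^{m+1}$ by $L$ must again be identified with $\IR^n\times\IZ^m$ (or $\IR^{n-1}\times\IZ^{m+1}$), which your sketch does not address.
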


The binomial coefficient $\binom{k}{i}$ in statement (1) equals $\frac{k!}{i!\,(k-i)!}$ if $i\in\{0,\dots,k\}$ and zero otherwise. The upper bound from this statement improves the previously known upper bound $c_k(\IZ^n)\le 2^k-1$ proved in \cite{Ba}. For $k=m\le 4$ it yields the upper bounds which coincide with the values of $c_k(\IZ^m)$ given in Theorem~\ref{t:exact}.

The lower bound $c_n^B(\IR^n)\ge \frac12(n^2+3n-4)$ from the item (2) improves the previously known lower bound $c_n^B(\IR^n)\ge\frac12(n^2+n)$, proved in \cite{Ba}.
For $n=4$ it gives the lower bound $12\le c_4^B(\IR^4)$, which coincides with the value of $c_4^B(\IR^4)=c_4(\IZ^4)$.

The statement (5) implies that the sequence $(c_k(\IZ^k))_{k=1}^\infty$ is strictly increasing, which answers Question 2 posed in \cite{Ba}.
Theorem~\ref{t:bounds} will be proved in Section~\ref{s:t:bounds} after some preparatory work done in Section~\ref{prepare}.

For every $k\in\IN$ the sequence $(c_k(\IZ^n))_{n=k}^\infty$ is non-increasing and thus it stabilizes starting from some $n$. The value of this number $n$ is upper bounded by the cardinal number $rc_k^B(\IZ^n)$ defined as follows.

For a topological group $G$ and a number $k\in\IN$ let $rc_k^B(G)$ be the minimal possible $\IZ$-rank $r_\IZ(\langle C\rangle)$ of a subgroup $\langle C\rangle$ of $G$ generated a $k$-centerpole subset $C\subset G$ of cardinality $|C|=c^B_k(G)$. If such a set $C$ does not exist (which happens if $c_k^B(G)=\binfty$), then we put $rc_k^B(G)=\binfty$. 

\begin{theorem}[Stabilization]\label{t:stab} Let $k\ge 2$ be an integer and $G$ be an abelian $\LLC$-group with totally bounded Boolean subgroup $G[2]$ and $\IR$-rank $n=r_\IR(G)$. Then
\begin{enumerate}
\item[\textup{(1)}] $c_k(G)=c_k^B(\IZ^\w)$ if $r_\IZ(G)\ge rc_k^B(\IZ^\w)$;
\item[\textup{(2)}] $c_k^B(G)=c_k^B(\IR^n\times\IZ^\w)$ if $G$ is metrizable and $r_\IZ(G)\ge rc_k^B(\IR^n\times\IZ^\w)$;
\item[\textup{(3)}] $c_k^B(G)=c_k^B(\IR^\w)$ if $G$ is metrizable and $r_\IR(\IR)\ge rc^B_k(\IR^\w)$.
\end{enumerate}
\end{theorem}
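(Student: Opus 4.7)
The three parts share a common structure: each asserts that, once a rank hypothesis on $G$ is satisfied, $c_k(G)$ or $c_k^B(G)$ stabilizes at the centerpole cardinality of a fixed ``limit'' group $L$---namely $L=\IZ^\w$ for part~(1), $L=\IR^n\times\IZ^\w$ for part~(2), and $L=\IR^\w$ for part~(3). My plan is to establish all three by a common template that combines a transport lemma with Theorem~\ref{t:ckG} and monotonicity.

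The key lemma I would prove is: if $C_0\subset L$ is a Borel $k$-centerpole of cardinality $\alpha=c_k^B(L)$ realizing the minimum rank $r_0$ of $\langle C_0\rangle$, then $\langle C_0\rangle$ is topologically isomorphic to an appropriate finite-rank group $H$ (namely $\IZ^{r_0}$ for~(1), $\IR^a\times\IZ^b$ for~(2), or an $r_0$-dimensional $\IR$-subspace for~(3)), and the transported set $C'\subset H$ is itself a $k$-centerpole of $H$. To prove this, given any (Borel) coloring $\chi\colon H\to k$, I would extend it to a Borel coloring $\tilde\chi\colon L\to k$ by setting $\tilde\chi|_H=\chi$ and, on $L\setminus H$, using a Borel coloring $\psi$ carefully chosen so that, for each reflection $\sigma_c\colon x\mapsto 2c-x$ with $c\in C_0$ and each color $i$, the ``matched'' set $\psi^{-1}(i)\cap\sigma_c(\psi^{-1}(i))$ is totally bounded in $L$. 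Each $\sigma_c$ is a Borel involution of $L$ fixing only $c$ (since $L$ is torsion-free), so a Borel selector on the reflection orbits produces such a $\psi$ for a single $\sigma_c$; the finitely many reflections in $C_0$ are then processed sequentially, at each step adjusting $\psi$ so that the new matched set is totally bounded. Applying the centerpole property of $C_0$ to $\tilde\chi$ yields an unbounded monochromatic $\tilde S\subset L$ symmetric with respect to some $c\in C_0$; by construction $\tilde S\setminus\langle C_0\rangle$ is totally bounded, so $\tilde S\cap\langle C_0\rangle$ is unbounded, and its image in $H$ supplies the required unbounded monochromatic symmetric subset.

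Given the key lemma, the three statements follow by combining it with Theorem~\ref{t:ckG} and monotonicity. For part~(1), $H=\IZ^{r_0}$ is discrete, so $c_k(\IZ^{r_0})=c_k^B(\IZ^{r_0})=\alpha$; since $r_\IZ(G)\ge r_0$ supplies a topological embedding $\IZ^{r_0}\hookrightarrow G$, monotonicity yields $c_k(G)\le\alpha$. The reverse inequality uses Theorem~\ref{t:ckG}(1) to write $c_k(G)=c_k(\IR^n\times\IZ^{m-n})$ and a dual transport argument: any $k$-centerpole of this group restricts to a Borel $k$-centerpole of $\IZ^\w$ of at most the same cardinality (via the discrete subgroup $\IZ^m\subset\IR^n\times\IZ^{m-n}$ together with the already-established discrete case), giving $c_k(G)\ge c_k^B(\IZ^\w)=\alpha$. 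Parts~(2) and~(3) proceed by the same template, with the metrizability of $G$ invoked to apply Theorem~\ref{t:ckG}(3), and with $\IZ^\w$ replaced by $\IR^n\times\IZ^\w$ or $\IR^\w$ respectively.

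The main obstacle is the construction of the Borel anti-symmetric coloring $\psi$. For part~(1), $L\setminus\langle C_0\rangle$ is a Borel subset of the Polish group $\IZ^\w$ with a purely combinatorial orbit structure, and a Borel selector (based on, for instance, a lexicographic order on $\IZ^\w$) handles each reflection; iteration over the finitely many $\sigma_c$ produces the required $\psi$ for $k\ge2$. For parts~(2) and~(3) the complement contains continuous $\IR$-factors, so the selector must respect the product topology---one uses a ``coordinate sign'' trick on a chosen linear functional to ensure that matched sets have bounded projections in every finite-dimensional quotient, iterating over $c\in C_0$ as before. A secondary technical point is verifying the appropriate topological character of $\langle C_0\rangle$ inside $L$, which needs some care when $L$ contains $\IR$-factors but follows from the finiteness of $r_0$ and the torsion-freeness of $L$.
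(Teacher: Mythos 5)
Your overall architecture -- transport a minimum-cardinality, minimum-rank centerpole set of the limit group into a finite-rank subgroup, then combine with Theorem~\ref{t:ckG} and monotonicity -- is exactly the paper's strategy (its Lemmas~\ref{l1:stab} and \ref{l2:stab}, which rest on the transfer Lemmas~\ref{l:sub} and \ref{l:subB}). The gap is in your key lemma, precisely at the step you flag as the main obstacle: the construction of the coloring $\psi$ on $L\setminus H$. Handling the reflections $\sigma_c$, $c\in C_0$, ``sequentially, at each step adjusting $\psi$'' does not work as described: a coloring made antisymmetric for $\sigma_{c_2}$ can lose antisymmetry for $\sigma_{c_1}$, and there is no argument that the iteration terminates with all matched sets bounded. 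The paper's device, which you are missing, is to make $\psi$ constant on cosets of $2\langle C_0\rangle$ (resp.\ $2H$): since $2c\in 2\langle C_0\rangle$ for every $c\in C_0$, \emph{all} the reflections $x\mapsto 2c-x$ induce the single involution $x\mapsto -x$ on the quotient, so one two-coloring of the quotient (obtained from a Borel selector of antipodal pairs, available by paracompactness) defeats every reflection simultaneously. The hypothesis that $G[2]$ is totally bounded (hence absorbed into $H$) is exactly what makes this involution fixed-point-free off $H/2H$, i.e.\ guarantees $x+2\langle C_0\rangle\ne -x+2\langle C_0\rangle$ for $x\notin H$; your sketch never identifies where this hypothesis enters.

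A second, smaller defect: the identification of $\langle C_0\rangle$ with the target group is not right for parts (2) and (3). A finitely generated subgroup of $\IR^\w$ is never ``an $r_0$-dimensional $\IR$-subspace''; it may well be a dense non-discrete subgroup of its linear span. The paper instead applies the transfer lemma to the \emph{closed} subgroup $H=L\cap G$ where $L$ is the linear span of $C_0$, and then invokes the structure theorem for closed subgroups of $\IR^r$ to identify $H$ with $\IR^s\times\IZ^{r-s}$ with $s\le n$; this also requires checking that $2H$ is closed so that Lemma~\ref{l:subB} applies. Finally, in your reverse inequality for part (1), a centerpole set of $\IR^n\times\IZ^{m-n}$ does not ``restrict'' to one of a copy of $\IZ^m$ -- it need not lie in any such copy; the correct move (Lemma~\ref{l2:stab}) is again the transfer lemma: the set is centerpole in the group it generates, which is algebraically free abelian of finite rank, giving $c_k^B(\IZ^\w)\le c_k(\IZ^r)\le c_k(\IR^n\times\IZ^{m-n})$.
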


In light of Theorem~\ref{t:stab} it is important to have lower and upper bounds for the numbers $rc_k(G)$. 

\begin{proposition}\label{p:rc} For any metrizable abelian $\LLC$-group $G$ with totally bounded Boolean subgroup $G[2]$, and a natural number $2\le k\le r_\IZ(G)$ we get
\begin{enumerate}
\item[\textup{(1)}] $rc_k^B(G)=k$ if $k\le 3$ and
\item[\textup{(2)}] $k\le rc_k^B(G)\le c_k^B(G)-3$ if $k\ge 3$.
\end{enumerate}
\end{proposition}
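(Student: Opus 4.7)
The plan splits into a common lower bound $rc_k^B(G)\ge k$ (valid for all $k\ge 2$) and two separate upper bounds, one for $k\le 3$ and one for $k\ge 3$.

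For the lower bound I argue by contradiction. Suppose $C\subset G$ is a minimum-cardinality $k$-centerpole set with $H=\langle C\rangle$ of $\IZ$-rank $l<k$. Since $H$ is finitely generated, its closure $\bar H$ in $G$ is locally compact (by the $\LLC$-property of $G$), and $\bar H$ inherits abelianness, metrizability and total boundedness of the Boolean subgroup from $G$. One verifies that $r_\IZ(\bar H)\le k-1$ (for example, by passing to a smaller closed subgroup containing $C$ if necessary). Theorem~\ref{t:ckG}(4) then yields $c_k^B(\bar H)\ge\w$, so no finite subset of $\bar H$ is a $k$-centerpole set for $\bar H$. In particular, $C$ itself is not $k$-centerpole for $\bar H$, so there exists a Borel $k$-coloring $\chi_0\colon\bar H\to k$ with no unbounded monochromatic subset of $\bar H$ symmetric about any point of $C$. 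I extend $\chi_0$ to a Borel $k$-coloring $\chi\colon G\to k$ using a Borel cross-section $s\colon G/\bar H\to G$ of the quotient, refined so that any unbounded monochromatic subset $S\subset G$ symmetric about some $c\in C\subset\bar H$ forces $S\cap\bar H$ (or a related restriction along the paired cosets) to be an unbounded monochromatic subset of $\bar H$ symmetric about $c$, contradicting the choice of $\chi_0$.

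For the upper bound when $k\le 3$, I exhibit an explicit minimum $k$-centerpole set of $\IZ$-rank exactly $k$ inside a topological copy of $\IZ^k$ in $G$ (which exists by the hypothesis $k\le r_\IZ(G)$). For $k=2$ the three-element set $\{0,e_1,e_2\}\subset\IZ^2$ is $2$-centerpole by \cite{BP} and has cardinality $3=c_2^B(G)$ by Theorem~\ref{t:exact}(2); for $k=3$, an explicit 6-element $3$-centerpole subset of $\IZ^3$ available from the existing literature has cardinality $6=c_3^B(G)$ by Theorem~\ref{t:exact}(3). Both generate subgroups of $\IZ$-rank $k$, giving $rc_k^B(G)\le k$ and, combined with the lower bound, the desired equality.

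For the upper bound $rc_k^B(G)\le c_k^B(G)-3$ when $k\ge 3$, I start from any minimum $k$-centerpole set $C_0\subset G$ of size $n=c_k^B(G)$. The centerpole property is translation-invariant: if $S$ is monochromatic symmetric about $c$ for $\chi$, then $S-g$ is monochromatic symmetric about $c-g$ for the coloring $\chi(\cdot+g)$, so $C$ is $k$-centerpole iff $C+g$ is. I translate $C_0$ so that $0\in C_0$, which already bounds $r_\IZ(\langle C_0\rangle)\le n-1$. To gain the remaining two units I introduce two further linear dependencies: the plan is to replace two carefully chosen elements $c_i,c_j\in C_0\setminus\{0\}$ by specific $\IZ$-linear combinations of the other elements of $C_0$, and verify that every $k$-coloring of $G$ continues to admit a monochromatic unbounded symmetric subset centered in the modified set. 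The main obstacle is twofold: first, arranging the Borel extension of $\chi_0$ in the lower bound so that $G$-unbounded monochromatic configurations really descend to $\bar H$-unbounded ones (a naive pullback through a Borel retraction can admit $G$-unbounded monochromatic symmetric subsets whose intersection with $\bar H$ is bounded); and second, justifying that the two additional linear relations in the upper bound for $k\ge 3$ can be imposed without destroying the minimum $k$-centerpole property.
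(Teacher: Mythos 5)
Your lower bound reduces to the claim that a minimum $k$-centerpole set $C\subset G$ must remain $k$-centerpole in (the closure of) $\langle C\rangle$, and you correctly flag the Borel extension of $\chi_0$ as the missing step --- but this is not a technicality to be "refined" later; it is the heart of the matter, and as set up it fails. If you descend to $H=\overline{\langle C\rangle}$ alone, there may be points $x\in G\setminus H$ with $2x\in 2H$ (namely $x\in H+G[2]$), and for such $x$ the cosets $x+2H$ and $-x+2H$ coincide, so no coset-respecting recoloring of $G\setminus H$ can give $x$ and $2c-x$ different colors; an unbounded monochromatic $c$-symmetric set can then live entirely outside $H$. The paper repairs this by descending not to $\overline{\langle C\rangle}$ but to $H=\overline{\langle C\rangle}+K_2$, where $K_2$ is the compact closure of $G[2]$ in the completion, and by proving a dedicated descent lemma (Lemma~\ref{l:subB}) whose hypotheses ($G[2]\subset H$, $2H$ closed, a Borel selector for $x\mapsto -x$ on $(G/2H)\setminus(H/2H)$ obtained from paracompactness) are exactly what make your "refined" extension possible. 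The paper then argues in the positive direction: $C$ descends to $H$, then through the compact quotient $H/K\cong\IR^s\times\IZ^{r-s}$, and finiteness of $c_k^B(\IR^r)$ forces $r\ge k$. Your contrapositive is logically equivalent, but the lemma you need is precisely the one you have not supplied.

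The upper bound $rc_k^B(G)\le c_k^B(G)-3$ is the more serious gap. The plan of translating $C_0$ so that $0\in C_0$ and then "replacing two carefully chosen elements by $\IZ$-linear combinations of the others" is not a viable strategy: there is no mechanism by which an arbitrary modification of a minimum centerpole set remains centerpole, and you offer none. The paper's route is structural rather than constructive: Lemma~\ref{l:afhull} shows that \emph{every} finite $k$-centerpole set $C$ for Borel colorings of $\IR^\w$ (for $k\ge 4$; the case $k=3$ of statement (2) follows from statement (1) since $c_3^B(G)=6$) automatically has affine hull of dimension at most $|C|-3$. Its proof is an induction using the enlargement lemmas of Section~\ref{s:enlarge}: if the affine hull were larger, one finds a support hyperplane $H$ with $0<|C\setminus H|\le 2$, shows $C\cap H$ is not centerpole in $H$, and invokes Lemma~\ref{l:+2} to conclude that adding at most two points on one side of $H$ cannot restore the centerpole property --- a contradiction. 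Applied to $q(C)\subset H/K\subset\IR^r$ with $0\in q(C)$, this gives $r\le |q(C)|-3\le c_k^B(G)-3$ with no need to alter $C$ at all. Only your treatment of $k\le 3$, via explicit minimum centerpole sets inside a copy of $\IZ^k$, matches the paper.
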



Finally, let us present the $(k+1)$-centerpole subset $\Xiup^{k}_s$ of $\IR^{1+k}$ that contains $2^k-1-\binom{k}{s}$ elements and gives the upper bound from Theorem~\ref{t:bounds}(1). This $(k+1)$-centerpole set $\Xiup_{k}$ is called the {\em $\binom{k}{s}$-sandwich}.

\begin{definition} Let $k$ be a non-negative integer and $s$ be a real number. The subsets
$$\mathbf 2^k_{<s}=\{(x_i)\in \mathbf 2^k:\textstyle{\sum\limits_{i=1}^kx_i}<s\}\mbox{ and }
\mathbf 2^k_{>s}=\{(x_i)\in \mathbf 2^k:\textstyle{\sum\limits_{i=1}^kx_i}>s\}$$
are called the {\em $s$-slices} of the $k$-cube $\mathbf 2^k$ where $\mathbf 2=\{0,1\}$ is the doubleton. For $s\in\{0,\dots,k\}$ the union of such slices has cardinality
$${|2^k_{<s}\cup 2^k_{>s}|=2^k-\binom{k}{s}=2^k-\frac{k!}{s!\,(k-s)!}}.$$
 
The subset
$$\Xiup^k_s=
\big(\{-1\}\times \mathbf 2^k_{<s}\big)\cup \big(\{0\}\times \mathbf 2^k_{<k}\big)\cup
\big(\{1\}\times\mathbf 2^k_{>s}\big)
$$
of the group $\IZ\times \IZ^{k}$ is called the {\em $\binom{k}{s}$-sandwich}.
For $s\in\{0,\dots,k\}$ it has cardinality
$$|\Xiup^k_s|=|\mathbf 2^k_{<k}|\cup|\mathbf 2^k_{<s}\cup \mathbf 2^k_{>s}|=2^{k+1}-1-\textstyle{\binom{k}{s}}.$$
\end{definition}  

The following theorem implies the upper bound in Theorem~\ref{t:bounds}(1). The proof of this theorem (given in Section~\ref{s:sandwich}) is not trivial and uses some elements of Algebraic Topology.

\begin{theorem}\label{sandwich} 
For every $k\in\IN$ and $s\le k-2$ the $\binom{k}{s}$-sandwich $\Xiup^{k}_{s}$ is a $(k+1)$-centerpole set in the group $\IZ\times\IZ^k$. 
\end{theorem}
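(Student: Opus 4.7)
The plan is to argue by contradiction. Assume there exists a $(k{+}1)$-coloring $\chi:\IZ^{k+1}\to\{0,1,\dots,k\}$ such that for every $c\in\Xiup^k_s$ and every color $i\in\{0,\dots,k\}$ the ``monochromatic symmetric set''  $A_{c,i}=\{x\in\IZ^{k+1}:\chi(x)=\chi(2c-x)=i\}$ is finite. I will derive a contradiction by producing an equivariant topological obstruction on the sphere $S^k$.

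First I would pass to a \emph{coloring at infinity}. Fix a non-principal ultrafilter $\mathcal U$ on $\IN$ and, for each rational direction $v\in S^k\cap\mathbb Q^{k+1}$, set $\widehat\chi(v)=\mathcal U\mbox{-}\lim_n\chi(\lfloor nv\rfloor)$. Since every $c\in\Xiup^k_s$ is bounded, $c/n\to 0$ in $\IR^{k+1}$, so the finiteness of each $A_{c,i}$ rescales to the single antipodal-separation property $\widehat\chi(v)\ne\widehat\chi(-v)$ for every such $v$. A standard Borel regularization then yields a closed cover $S^k=W_0\cup\cdots\cup W_k$ by color classes retaining a weakened form of this antipodal separation.

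Next I would apply the Lyusternik--Schnirelmann theorem: a closed $(k{+}1)$-cover of $S^k$ forces some $W_i$ to contain an antipodal pair $v,-v$. Tracing this pair back through the ultrafilter construction produces sequences $(x_n),(y_n)\subset\chi^{-1}(i)$ with $x_n/|x_n|\to v$, $y_n/|y_n|\to-v$ and $|x_n|,|y_n|\to\infty$. The midpoints $c_n=\tfrac12(x_n+y_n)$ stay in a fixed bounded subset of $\tfrac12\IZ^{k+1}$, so by pigeonhole infinitely many coincide with a single point $c^\star$; the pairs $\{x_n,y_n\}$ for those $n$ then constitute an unbounded monochromatic subset of color $i$ symmetric about $c^\star$.

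The crux and main obstacle is to guarantee $c^\star\in\Xiup^k_s$. Parity alone only constrains $c^\star$ to some translate of $\{-1,0,1\}\times\{0,1\}^k$; the three defining inequalities $\sum x_i<s$, $\sum x_i<k$, and $\sum x_i>s$ then single out which of these $3\cdot 2^k$ parity vertices are allowed, and the removed $\binom{k}{s}+1$ corners are precisely those whose avoidance demands genuine equivariant input. This is where algebraic topology enters: one must compute the $\IZ/2$-equivariant cohomological index of the nerve of the ``reflection cover'' of $S^k$ induced by $\Xiup^k_s$ and show that it is at least $k$ under the hypothesis $s\le k-2$. This non-vanishing (in the Conner--Floyd / Dold sense) is the precise Borsuk--Ulam-type statement that forces $c^\star$ to lie in $\Xiup^k_s$, and verifying it --- in particular, checking that the condition $s\le k-2$ is the exact numerical threshold at which the equivariant index survives the removal of the $\binom{k}{s}+1$ corners --- is the technical heart of the proof.
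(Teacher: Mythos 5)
Your outline has the right flavour (a Borsuk--Ulam-type obstruction at infinity), but it contains two genuine gaps, and the first is fatal to the architecture of the argument. The hypothesis you are negating says only that $\chi(x)\notin\chi(2\,\Xiup^k_s-x)$ for all $x$ outside a bounded set: the colour of $x$ avoids the colours of finitely many \emph{specific} lattice points near $-x$. Your ultrafilter limit along directions erases exactly this pointwise information, and the claimed ``Borel regularization yielding a closed cover retaining antipodal separation'' cannot exist in general: if your chain (rescale $\to$ antipodal separation $\to$ closed cover $\to$ Lyusternik--Schnirelmann $\to$ antipodal pair $\to$ bounded midpoints) were valid, it would run verbatim with the single centre $c=0$ and prove that $\{0\}$ is $(k+1)$-centerpole in $\IZ^{k+1}$, contradicting $c_2(\IZ^2)=3$. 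Concretely, the $2$-colouring of $\IZ^2$ by the sign of the first nonzero coordinate satisfies $\chi(x)\ne\chi(-x)$ for all $x\ne0$; its colour classes at infinity are half-open arcs, their closures contain antipodal pairs only at the seams, and tracing such a pair back gives sequences $x_n,y_n$ of the same colour whose directions are nearly opposite but whose sums $x_n+y_n$ are unbounded --- so no symmetric pair, and no fixed centre $c^\star$, is produced. Thus the antipodal pair does \emph{not} come for free from LS, and your step asserting that the midpoints ``stay in a fixed bounded subset'' is unjustified: approximate antipodality of directions says nothing about boundedness of $x_n+y_n$.

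The second gap is the one you flag yourself: the equivariant index computation that is supposed to use $s\le k-2$ is not carried out, and it is the entire content of the theorem. In the paper this role is played by a purely combinatorial covering lemma (Lemma~\ref{perebor}): the boundary of a large cube $[-2n,2n]^{1+k}$ admits a triangulation each of whose simplices lies in a translate of $2\,\Xiup^k_s$, and this is precisely where $s\le k-2$ enters (via a case analysis). That lemma, combined with the antipodal-avoidance property of $\chi$, forces the simplicial extension $f$ of $\chi$ to map $\partial K$ into $\partial\Delta$; a second, explicitly constructed map $\varphi$ on the equatorial sphere $\partial K_0$ satisfies $\varphi(x)\ne\varphi(-x)$, is homotopic to $f|\partial K_0$ inside $\partial\Delta$, and hence yields an essential antipodal-type map $\partial K_0\to S^{k-1}$ that must simultaneously be null-homotopic because $\partial K_0$ contracts inside $\partial K$. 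Without an analogue of this covering lemma your proposal never engages the hypothesis $s\le k-2$ or the actual shape of $\Xiup^k_s$, so it cannot be completed as written.
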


In light of this theorem it is important to known the geometric structure of $\binom{k}{s}$-sandwiches $\Xiup^k_s$ for $s\le k-2$. For $k\le 3$ those sandwiches are written below:
\begin{itemize}
\item $\Xiup^0_{-2}=\{(1,0)\}$ is a singleton in $\IZ\times\IZ^0=\IZ\times\{0\}$;
\item $\Xiup^1_{-1}=\{(0,1),(1,0),(1,1)\}$ is the unit square without a vertex in $\IZ^2$; 
\item $\Xiup^2_0=\{(0,0,0),(0,0,1),(0,1,0),(1,0,1),(1,1,0),(1,1,1)\}$ is the unit cube without two opposite vertices in $\IZ^3$;
\item $\Xiup^3_0$ is the unit cube without two opposite vertices in $\IZ^4$, so $|\Xiup^3_0|=14$; 
\item $\Xiup^3_1$ is a 12-element subset in $\IZ^4$ whose slices $\{-1\}\times \bold 2^3_{<1}$, $\{0\}\times\bold 2^3_{<3}$, and $\{1\}\times\bold 2^3_{>1}$ have 1, 7, and 4 points, respectively. 
\end{itemize}

By a {\em triangle} ({\em centered at the origin}) we shall understand any affinely independent subset $\{a,b,c\}$ in $\IR^n$ (such that $a+b+c=0$).
A {\em tetrahedron} ({\em centered at the origin}) is any affinely independent subset $\{a,b,c,d\}\subset\IR^n$ (with $a+b+c+d=0$).

Let us observe that the sandwich
\begin{itemize}
\item $\Xiup^0_{-2}$ has cardinality $c_1(\IR^1)=1$ and is affinely equivalent to any singleton $\{a\}$ in $\IR^1$;
\item $\Xiup^1_{-1}$ has cardinality $c_2(\IR^2)=3$ and is affinely equivalent to any triangle $\Delta=\{a,b,c\}$ in $\IR^2$;
\item $\Xiup^2_0$ has cardinality $c_3(\IR^3)=6$ and is affinely equivalent to $\Delta\cup (x-\Delta)$ where $\Delta\subset\IR^3$ is a triangle centered at zero and $x\in \IR^3$ does not belong to the linear span of $\Delta$;
\item $\Xiup^3_1$ has cardinality $c_4(\IR^4)=12$ and is affinely equivalent to $(x-\Delta)\cup\Delta\cup(-x-\Delta)$ where $\Delta\subset\IR^4$ is a tetrahedron centered at zero and $x\in \IR^4$ does not belong to the linear span of $\Delta$.
\end{itemize} To see that $\Xiup^3_1$ is of this form, observe that $c=(\frac14,\frac12,\frac12,\frac12)$ is the barycenter of $\Xiup^3_1$ and $\Xiup^3_1-c=(x-\Delta)\cup\Delta\cup(-x-\Delta)$ for the tetrahedron
 $$\Delta=\{(0,0,0,1),(0,0,1,0),(0,1,0,0),(1,1,1,1)\}-c$$ and the point $x=(\frac12,0,0,0)$.

Now we briefly describe the structure of this paper. In Section~\ref{prepare} we establish a covering property of sandwiches, which will be essentially used in the proof of Theorem~\ref{sandwich}, given in Section~\ref{s:sandwich}.
Section~\ref{s:Tshape} is devoted to T-shaped sets which will give us lower bounds for the numbers $c_k^B(\IR^k)$. In Section~\ref{s:enlarge} we prove some lemmas that will help us to analyze the geometric structure of centerpole sets in Euclidean spaces. In Section~\ref{s:group-subgroup} we study the interplay between centerpole properties of subsets in a group and those of its subgroups. In Section~\ref{s:stab} we prove a particular case of the Stability Theorem~\ref{t:stab} for the groups $\IR^n\times\IZ^{m-n}$.
In Sections~\ref{s:t:bounds}, \ref{Pf:exact}, and \ref{s:Pf:lc} we give the proofs of Theorems~\ref{t:bounds}, \ref{t:exact}, and \ref{t:ckG}, respectively. Sections~\ref{s:p:rc} and \ref{s:t:stab} are devoted to the proofs of Proposition~\ref{p:rc} and Theorem~\ref{t:stab}. The final Section~\ref{s:OP} contains selected open problems.

\section{Covering $\Sigmaup_0$-sets by shifts of the sandwich $\Xiup^k_s$}
\label{prepare}

In this section we shall prove a crucial covering property of the $\binom{k}{s}$-sandwich $\Xiup^{k}_s$. In the next section this property will be used in the proof of Theorem~\ref{sandwich}. We assume that $k\in\w$ and $s\le k-2$ is integer. 

First we introduce the notion of a $\Sigmaup_0$-subset of the cube $\mathbf 2^{k+1}=\{0,1\}^{k+1}$.
For $i\in\{0,\dots,k\}$ consider the $i$-th coordinate projection 
$$\pr_i:\IR^{k+1}\to\IR,\;\;\pr_i:(x_j)_{j=0}^k\mapsto x_i.$$
The subsets of the form $\2^{k+1}\cap \pr_i^{-1}(l)$ for $l\in\{0,1\}$ are called the {\em facets} of the cube $\2^{k+1}$. 

Next, consider the function $$\Sigmaup:\IR^{k+1}\to\IR,\;\;\Sigmaup:(x_i)_{i=0}^k\mapsto\textstyle{\sum\limits_{i=1}^kx_i},$$
and observe that $\Sigmaup(\2^{k+1})=\{0,\dots,k\}$. 

Taking the diagonal product of the functions $\pr_0$ and $\Sigmaup$,  we obtain the linear operator $$\Sigmaup_0:\IR^{k+1}\to\IR^2,\;\;\Sigmaup_0:(x_i)_{i=0}^k\mapsto(x_0,\textstyle{\sum\limits_{i=1}^kx_i}).$$

\begin{definition}
A subset $\tau\subset \2^{k+1}$ will be called a {\em $\Sigmaup_0$-set}
if 
\begin{itemize}
\item $\tau$ lies in a facet of $\2^{k+1}$;
\item there exists $a\in\{0,\dots,k-1\}$ such that $\Sigmaup_0(\tau)\subset\{(0,a),(0,a+1),(1,a+1)\}$ or \newline
$\Sigmaup_0(\tau)\subset\{(0,a),(1,a),(1,a+1)\}$.
\end{itemize}
\end{definition}

\begin{lemma} \label{perebor} Each $\Sigmaup_0$-set $\tau\subset \bold 2^{k+1}$ is covered by a suitable shift $x+\Xiup^k_s$ of the $\binom{k}{s}$-sandwich $\Xiup^k_s$.
\end{lemma}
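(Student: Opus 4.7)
The plan is to exploit the decomposition
$$\Xiup^k_s=\bigl(\{-1,0,1\}{\times}\2^k\bigr)\cap\Sigmaup_0^{-1}\!\bigl(\Sigmaup_0(\Xiup^k_s)\bigr),$$
whose image under $\Sigmaup_0$ is the ``staircase'' $\bigl(\{-1\}{\times}\{0,\dots,s{-}1\}\bigr)\cup\bigl(\{0\}{\times}\{0,\dots,k{-}1\}\bigr)\cup\bigl(\{1\}{\times}\{s{+}1,\dots,k\}\bigr)$. This identity reduces $\tau\subset x+\Xiup^k_s$ to the coordinatewise condition $t_0-x_0\in\{-1,0,1\}$, $t_i-x_i\in\{0,1\}$ for $i\ge 1$ and every $t\in\tau$, together with the linear sum condition $\Sigmaup_0(t-x)\in\Sigmaup_0(\Xiup^k_s)$ for every $t\in\tau$; the sum condition depends on $x$ only through the pair $(\alpha,\beta):=(x_0,\sum_{i\ge 1}x_i)$.

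Next I would compute, for each of the two L-shape types of $\Sigmaup_0(\tau)$, the admissible $(\alpha,\beta)$-region. A direct check shows that for the type $\{(0,a),(0,a{+}1),(1,a{+}1)\}$ one has either $\alpha=0$ with $\beta\in[a{-}k{+}2,\,a{-}s]$, or (when $s\ge 2$) $\alpha=1$ with $\beta\in[a{-}s{+}2,\,a]$; analogous bands arise for the second type. The hypothesis $s\le k-2$ is precisely what makes the $\alpha=0$ band nonempty and provides the slack that drives the entire argument.

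I would then realize such an $(\alpha,\beta)$ by an explicit $x$ and run a case analysis on the facet containing $\tau$. If $\tau\subset\{x_0=c\}$, then $t_0$ is constant on $\tau$, only the column-$0$ vertices of the L-shape are realized, and the trivial shift $x=0$ covers $\tau$ (supplemented by $x=(-1,0,\dots,0)$ in the boundary case $a=k-1$). If $\tau\subset\{x_j=c\}$ with $j\ge 1$, one takes $x_j\in\{c{-}1,c\}$ and $x_i=0$ for the remaining $i\ge 1$, which yields $\beta\in\{c{-}1,c\}$; meanwhile $\alpha\in\{0,1\}$ is always realizable since $T_0:=\{t_0:t\in\tau\}\subset\{0,1\}$. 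A short bookkeeping argument confirms that the four pairs $(\alpha,\beta)\in\{0,1\}\times\{c{-}1,c\}$ together hit the required band for every interior value $a\in\{0,\dots,k{-}2\}$.

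The main obstacle is the boundary case $a\in\{0,k{-}1\}$ combined with a facet $\{x_j=c\}$, $j\ge 1$, where the four naive pairs may fall just outside the admissible band. The saving observation is that the facet constraint automatically eliminates one vertex of the L-shape from $\tau$: $\Sigmaup_0(t)=(0,0)$ forces $t=\mathbf 0$, which is incompatible with $t_j=1$, while $\Sigmaup_0(t)\in\{(0,k),(1,k)\}$ forces $t_j=1$, which is incompatible with $t_j=0$. Thus $\tau$ collapses to a configuration supported on at most two L-shape vertices, and is covered by an explicit shift such as $x=\pm e_j$ or $x=\pm e_j+e_0$ chosen according to the sign of $c$. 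The entire verification is tedious but purely combinatorial, with $s\le k-2$ furnishing the crucial slack at every step.
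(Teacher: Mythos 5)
Your argument is correct, and at bottom it is the paper's argument reorganized rather than a different proof: the paper establishes the same statement by running through seventeen explicit cases (indexed by the facet direction $\gamma$, the facet level $l$, the L-shape type of $\Sigmaup_0(\tau)$, and the position of $a$ relative to $s$), in each case exhibiting one of the shifts $0$, $\pm\mathbf e_0$, $\pm\mathbf e_\gamma$, $\mathbf e_\gamma+\mathbf e_0$ --- which are exactly your candidates $x=\alpha\mathbf e_0+x_j\mathbf e_j$ with $(\alpha,x_j)\in\{0,1\}\times\{c-1,c\}$. What your write-up genuinely adds is the identity $\Xiup^k_s=(\{-1,0,1\}\times\2^k)\cap\Sigmaup_0^{-1}(\Sigmaup_0(\Xiup^k_s))$, which is correct (each slice $\2^k_{<s}$, $\2^k_{<k}$, $\2^k_{>s}$ is a full $\Sigmaup$-preimage inside $\2^k$) and which turns the verification into a transparent two-dimensional problem: translate an L-shape into the staircase. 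This buys a systematic check in place of a raw enumeration, and it isolates where $s\le k-2$ is used (nonemptiness of the $\alpha=0$ band). I verified your band computations and the claim that the four pairs $(\alpha,\beta)$ hit the band whenever all three L-shape vertices are realized; the facet constraint $t_j=c$ forces $a\le k-2$ for $c=0$ and $a\ge 1$ for $c=1$ exactly when the full L-shape is realized, so your ``saving observation'' about automatically eliminated vertices does close the remaining boundary configurations. One small slip: in the facet case $\tau\subset\pr_0^{-1}(c)$ you offer only the shifts $x=0$ and $x=-\mathbf e_0$, but for $c=1$ and $a<s$ the realized vertices $(1,a),(1,a+1)$ fall below the arm $\{1\}\times\{s+1,\dots,k\}$ of the staircase and one needs $x=+\mathbf e_0$ (the paper's case 0.3); this is the omission of a line, not of an idea, since $+\mathbf e_0$ already appears on your candidate list in the other facet case.
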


\begin{proof} 
Decompose the $\Sigmaup_0$-set $\tau$ into the union $\tau=\tau_0\cup\tau_1$ where $\tau_i=\tau\cap\pr_0^{-1}(i)$ for $i\in\{0,1\}$. 
By our hypothesis $\tau$ lies in a facet of the cube $\mathbf 2^{k+1}$. Consequently, there are numbers $\gamma\in\{0,\dots,k\}$ and $l\in\{0,1\}$ such that $\tau\subset\pr_\gamma^{-1}(l)$. If $\tau_0$ or $\tau_1$ is empty, then we can change the facet and assume that $\gamma=0$. 

Since $\tau$ is a $\Sigmaup_0$-set, the image $\Sigmaup_0(\tau)$ lies in one of the triangles: $\{(0,a),(0,a+1),(1,a+1)\}$ or $(\{(0,a),(1,a),(1,a+1)\}$ for some $a\in\{0,\dots,k-1\}$. 
This implies that $\Sigmaup(\tau)\subset\{a,a+1\}$. 

Identify the cube $\2^k$ with the subcube $\{0\}\times \2^k$ of $\Xiup^k_s$ and let $\bold e_0=(1,0,\dots,0)\in\2^{k+1}$. Then 
$$\Xiup^k_s=\2^k_{<k}\cup(\bold e_0+\2^k_{>s})\cup(-\bold e_0+\2^k_{<s}).$$
Depending on the value of $\gamma$, two cases are possible.
\smallskip

0. $\gamma=0$. This case has 4 subcases.
\smallskip

0.1. If $l=0$ and $a<k-1$ then $\Sigmaup_0(\tau)\subset\{(0,a),(0,a+1)\}\subset\{0,\dots,k-1\}$ and 
$\tau\subset \2^k_{<k}\subset\Xiup^k_s$.

0.2. If $l=0$ and $a\ge k-1$, then $a>k-2\ge s$  and  
$\tau\subset \2^k_{>s}\subset-\bold e_0+\Xiup^k_s$.

0.3. If $l=1$ and $a<k-1$, then $\Sigmaup_0(\tau)\subset \{(1,a),(1,a+1)\}\subset\{0,\dots,k-1\}$ and hence $\tau\subset\bold e_0+\2^k_{<k}\subset\bold e_0+\Xiup^k_s$.

0.4. If $l=1$ and $a\ge k-1$, then $a>k-2\ge s$ and then $\tau\subset \bold e_0+\2^k_{>s}\subset\Xiup^k_s$.
\smallskip

I. $\gamma\ne 0$. In this case $\tau_0$ and $\tau_1$ are not empty.
Let $\bold e_\gamma$ be the basic vector whose $\gamma$-th coordinate is 1 and the others are zero. By our assumption,  $\Sigmaup_0(\tau)\subset\{(0,a),(1,a),(1,a+1)\}$ or $\Sigmaup_0(\tau)\subset\{(0,a),(0,a+1),(1,a+1)\}$ for some $a\in\{0,\dots,k-1\}$.
So, we consider two subcases.
\smallskip

I.1. $\Sigmaup_0(\tau)\subset\{(0,a),(1,a),(1,a+1)\}$. 
This case has two subcases.
\smallskip

I.1.0. $l=0$. In this subcase $\Sigmaup(\tau)=\Sigmaup(\tau_0)\cup\Sigmaup(\tau_1)=\{a,a+1\}\subset\{0,\dots,k-1\}$ and hence $a\le k-2$. Depending on the value of $a$, we have three possibilities.

If $a>s$, then $\tau=\tau_0\cup\tau_1\subset \2^k_{<k}\cup (\bold e_0+\2^k_{>s})\subset\Xiup^k_s$.

If $a=s$, then for the shifted set $\bold e_\gamma+\tau$ we get
$$\Sigmaup_0(\bold e_\gamma+\tau)\subset\{(0,a+1),(1,a+1),(1,a+2)\}.$$ Since $a=s\le k-2$, we conclude that $\bold e_\gamma+\tau_0\subset \2^k_{<k}\subset\Xiup^k_s$. On the other hand, $\bold e_\gamma+\tau_1\subset \bold e_1+\2^k_{>s}\subset\Xiup^k_s$. Then $\tau\subset -\bold e_\gamma+\Xiup^k_s$.

If $a<s$, then $a+1\le s\le k-2$ and hence $\tau=\tau_0\cup\tau_1\subset \2^k_{<s}\cup(\bold e_0+\2^k_{<k})\subset \bold e_0+\Xiup^k_s$. 
 \smallskip

I.1.1. $l=1$. In this subcase three possibilities can occur:

If $a>s$, then $\tau=\tau_0\cup\tau_1\subset \2^k_{<k}+(\bold e_0+\2^k_{>s})\subset\Xiup^k_s$;

If $a<s$, then $a+1\le s\le k-2$ and then $\tau=\tau_0\cup\tau_1\subset \2^k_{<s}\cup(\bold e_0+\2^k_{<k})\subset \bold e_0+\Xiup^k_s$.

If $a=s$, then for the shift $-\bold e_\gamma+\tau$ we get $\Sigmaup_0({-}\bold e_\gamma{+}\tau)\subset\{(0,a{-}1),(1,a{-}1),(1,a)\}$ and hence
$-\bold e_\gamma+\tau\subset \2^k_{<s}\cup(\bold e_0+\2^k_{<k})\subset\bold e_0+\Xiup^k_s.$
Consequently, $\tau\subset \bold e_\gamma+\bold e_0+\Xiup^k_s$.
\smallskip

I.2. $\Sigmaup_0(\tau)\subset\{(0,a),(0,a+1),(1,a+1)\}$.  
Depending on the value of $l\in\{0,1\}$, consider two subcases.
\smallskip

I.2.0. $l=0$. In this case $\{0,\dots,k-1\}\supset\Sigmaup(\tau)=\Sigmaup(\tau_0)\cup\Sigmaup(\tau_1)=\{a,a+1\}\cup\{a+1\}$ and consequently, $a+1\le k-1$.
\smallskip

If $a\ge s$, then $\tau=\tau_0\cup\tau_1\subset \2^k_{<k}\cup (\bold e_0+\2^k_{>s})\subset\Xiup^k_s.$

If $a=s-1$, then we can consider the shift $\bold e_\gamma+\tau$ and repeating the preceding argument, show that $\bold e_\gamma+\tau\subset \Xiup^k_s$. Consequently, $\tau\subset-\bold e_\gamma+\Xiup^k_s$.

If $a<s-1$, then $\tau=\tau_0\cup\tau_1\subset \2^k_{<s}\cup (\bold e_0+\2^k_{<k})\subset \bold e_0+\Xiup^k_s$.
\smallskip

I.2.1. $l=1$. In this case we have four subcases.

If $a=k-1$, then for the shifted set $-\bold e_\gamma+\tau$ we get\newline $\Sigmaup_0(-\bold e_\gamma+\tau)\subset\{(0,a-1),(0,a),(1,a)\}$ and $-\bold e_\gamma+\tau\subset\2^k_{<k}\cup(\bold e_0+\2^k_{>s})=\Xiup^k_s$. Then $\tau\subset\bold e_\gamma+\Xiup^k_s$.

If  $s\le a<k-1$, then $\tau=\tau_0\cup\tau_1\subset \2^k_{<k}\cup(\bold e_0+\2^k_{>s})=\Xiup^k_s$.

If $a=s-1$, then for the shifted set $-\bold e_\gamma+\tau$ we get\newline 
$\Sigmaup_0(-\bold e_\gamma+\tau)\subset\{(0,a-1),(0,a),(1,a)\}$ and then
$-\bold e_\gamma+\tau\subset 2^k_{<s}\cup(\bold e_0+\2^k_{<k})=\bold e_0+\Xiup^k_s$
and $\tau\subset \bold e_\gamma+\bold e_0+\Xiup^k_s$.

If $a<s-1$, then $\tau=\tau_0\cup\tau_1\subset \2^k_{<s}\cup(\bold e_0+\2^k_{<k})=\bold e_0+\Xiup^k_s$.
\smallskip

This was the last of the 17 cases we have considered. 
\end{proof}

\section{Proof of Theorem~\ref{sandwich}}\label{s:sandwich}

The proof of Theorem~\ref{sandwich} uses the idea of the proof of Lemma 6 in \cite{Ba} (which established the upper bound $c_3(\IZ^3)\le 6$).

We need to prove that for every $k\le n$ and $s\le k-2$ the 
$\binom{k}{s}$-sandwich $\Xiup^k_s$ is $(k+1)$-centerpole in $\IZ\times \IZ^{k}=\IZ^{1+k}$. Assuming that this is not true, find a coloring $\chi:\IZ^{1+k}\to k+1=\{0,\dots,k\}$ such that $\IZ^{1+k}$ contains no unbounded monochromatic subset, symmetric with respect to some point $c\in\Xiup^k_s$. Observe that for each color $i\in\{0,\dots,k\}$ the intersection $A_i\cap (2c-A_i)$ is the largest subset of $A_i$, symmetric with respect to the point $c$. By our assumption, the (maximal $i$-colored $c$-symmetric) set $A_i\cap (2c-A_i)$ is bounded and so is the union $$B=\bigcup_{i=0}^k\,\bigcup_{c\in\Xiup^k_s}A_i\cap(2c-A_i)$$ of all such maximal symmetric monochromatic subsets. 

\begin{claimm}\label{cl1} $\chi(x)\notin\chi(-x+2\,\Xiup^k_s)$ for any $x\notin B$.
\end{claimm}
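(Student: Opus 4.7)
The plan is to proceed by contraposition. Suppose that $x\in\IZ^{1+k}$ satisfies $\chi(x)\in\chi(-x+2\,\Xiup^k_s)$; I will show that then $x\in B$. By the assumption there exists $c\in \Xiup^k_s$ with $\chi(x)=\chi(2c-x)$. Set $i=\chi(x)$, so that $x\in A_i$ and $2c-x\in A_i$. The second membership can be rewritten as $x\in 2c-A_i$, hence $x\in A_i\cap(2c-A_i)$. Since the latter set is one of the summands in the union defining $B$, we conclude $x\in B$, which is the required contradiction.

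The only thing to verify is that the rewriting in the previous paragraph is legitimate, i.e.\ that for the symmetry used in the claim (of the form $y\mapsto -y+2c$) the largest $c$-symmetric subset of $A_i$ really is $A_i\cap(2c-A_i)$; this is exactly the standard identification already used in the sentence introducing $B$, because a subset $S\subset A_i$ is symmetric about $c$ iff $S=2c-S$, and the largest such $S$ contained in $A_i$ is $A_i\cap(2c-A_i)$. So no obstacle is expected; the claim is essentially a reformulation of the definition of $B$, and the entire argument fits in one short paragraph.
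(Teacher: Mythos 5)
Your proof is correct and follows the same route as the paper: both argue by contraposition, noting that $\chi(x)=\chi(2c-x)$ for some $c\in\Xiup^k_s$ places $x$ in the set $A_i\cap(2c-A_i)$ with $i=\chi(x)$, which is one of the summands defining $B$. The only difference is that you spell out the membership $x\in A_i\cap(2c-A_i)$ explicitly, whereas the paper just says ``$x\in B$ by the definition of $B$''; the content is identical.
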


\begin{proof} Assuming conversely that $\chi(x)=\chi(-x+2c)$ for some $c\in\Xiup^k_s$, we get\break $\frac12(x+(-x+2c))=c$ and hence $x$ and $-x+2c$ are two points symmetric with respect to the center $c\in\Xiup^k_s$ and colored by the same color. Consequently, $x\in B$ by the definition of $B$.
\end{proof} 

Fix a number $n\in\IN$ so big that the cube $K=[-2n,2n]^{1+k}\subset\IR^{1+k}$  contains the bounded set $B$ in its interior and let $\partial K$ be the topological boundary $\partial K$ of the cube $K$ in $\IR^{1+k}$. Observe that Claim~\ref{cl1} implies:

\begin{claimm}\label{cl2} $\chi(-x)\notin\chi(x+2\,\Xiup^k_s)$ for each point $x\in\IZ^{1+k}\cap \partial K$.
\end{claimm}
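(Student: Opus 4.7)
The plan is to derive Claim~\ref{cl2} almost immediately from Claim~\ref{cl1} by substituting $-x$ for $x$. The whole content of Claim~\ref{cl2} is a symmetry observation about the way the auxiliary cube $K$ was chosen.

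First I would record the relevant properties of the cube $K=[-2n,2n]^{1+k}$. It was fixed to contain the bounded set $B$ in its interior, so $\partial K\cap B=\varnothing$. Moreover $K$ is centrally symmetric about the origin, hence $\partial K=-\partial K$. Combining these two facts, if $x\in\IZ^{1+k}\cap\partial K$, then $-x\in\partial K$ as well, and therefore $-x\notin B$.

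Next, I would apply Claim~\ref{cl1} to the point $y:=-x$. Since $y\notin B$, Claim~\ref{cl1} yields
\[
\chi(y)\notin\chi(-y+2\,\Xiup^k_s),
\]
which, after substituting $y=-x$, is exactly the statement $\chi(-x)\notin\chi(x+2\,\Xiup^k_s)$ required by Claim~\ref{cl2}.

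I do not expect any serious obstacle here: all the real work is packed into Claim~\ref{cl1} (which uses the definition of $B$ as the union of the maximal $c$-symmetric monochromatic sets for $c\in\Xiup^k_s$) and into the choice of $K$ large enough to contain $B$ in its interior. The only thing that needs to be said in the proof is that the central symmetry of the cube $K$ converts the hypothesis $x\in\partial K$ into the hypothesis $-x\notin B$ needed to invoke Claim~\ref{cl1} at the point $-x$.
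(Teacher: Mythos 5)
Your proof is correct and matches the paper's intent: the paper simply states that Claim~\ref{cl1} implies Claim~\ref{cl2}, and the substitution $y=-x$ together with the observations that $\partial K=-\partial K$ and $B\subset\Int(K)$ (hence $-x\notin B$) is exactly the implicit argument. Nothing is missing.
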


We recall that for every $i\in k+1=\{0,\dots,k\}$  
$$\pr_i:\IR^{1+k}\to\IR,\;\;\pr_i:(x_j)_{j=0}^k\mapsto x_i,$$denotes the $i$th coordinate projection and $\bold e_i$ is the unit vector along the $i$-th coordinate axis, that is, $\pr_j(\bold e_i)=1$ if $i=j$, and 0 otherwise.

For a subset $J\subset \{0,\dots,k\}$ let $\bold e_J=\sum_{j\in J}\bold e_j\in\IR^{1+k}$ be the vector of the principal diagonal of the cube $\2^J=\{(x_i)_{i=0}^k\in \2^{1+k}:\forall\, i\notin J\;\; (x_i=0)\}\subset\2^{1+k}$. 

For a point $x\in\IR^{1+k}$ let $J_x=\{i\in k+1:x_i\notin 2\IZ\}$ and let ${\lfloor} x{\rfloor}$ be the unique point in $(2\IZ)^{1+k}$ such that $x \in \lfloor x\rfloor+2\cdot\2^{J_x}$. So, ${\lfloor}x{\rfloor}\le x\le{\lfloor}x{\rfloor}+2\,\bold e_{J_x}$.  

Consider the function $\Sigmaup:\IR^{k+1}\to\IR$ assigning to each sequence $x=(x_i)_{i=0}^k$ the sum $\Sigmaup(x)=\sum_{i=1}^kx_i$. The map $\Sigmaup$ combined with the 0th coordinate projection $\pr_0$ compose the linear operator $$\Sigmaup_0:\IR^{1+k}\to\IR^2,\;\;\Sigmaup_0:(x_i)_{i=0}^k\mapsto (x_0,\Sigmaup(x))=(x_0,\textstyle{\sum\limits_{i=1}^kx_i}).$$

Choose a triangulation $T$ of the boundary $\partial K$ of the cube $K=[-2n,2n]^{1+k}$ such that for each simplex $\tau$ of the triangulation there is 
a point $\dot\tau\in(2\IZ)^{1+k}$ such that $\frac12(\tau-\dot\tau)$ is a $\Sigmaup_0$-subset of $\2^{1+k}$. The reader can easily check that such a triangulation $T$ always exists. The choice of the triangulation $T$ combined with Lemma~\ref{perebor} implies

\begin{claimm}\label{cl3} Each simplex $\tau$ of the triangulation $T$ is covered by a suitable shift $x+2\Xiup^k_s$ of the homothetic copy $2\Xiup^k_s$ of the $\binom{k}{s}$-sandwich $\Xiup^k_s$.
\end{claimm}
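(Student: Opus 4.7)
The plan is to show that Claim~\ref{cl3} follows almost immediately by combining the defining property of the triangulation $T$ with Lemma~\ref{perebor}. The job is essentially a change of scale: the triangulation is chosen so that each simplex, when translated by a vector in $(2\IZ)^{1+k}$ and rescaled by $\tfrac12$, lands inside the unit cube as a $\Sigmaup_0$-set; Lemma~\ref{perebor} then covers this rescaled piece by a shift of the unscaled sandwich, and undoing the rescaling produces the desired shift of $2\Xiup^k_s$.

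More concretely, I would fix an arbitrary simplex $\tau$ of $T$. By the construction of $T$ announced just before the claim, there exists a point $\dot\tau\in(2\IZ)^{1+k}$ such that the set
\[
\sigma:=\tfrac12(\tau-\dot\tau)
\]
is a $\Sigmaup_0$-subset of the unit cube $\2^{1+k}$. Then I would apply Lemma~\ref{perebor} directly to $\sigma$, obtaining a vector $y\in\IZ^{1+k}$ with $\sigma\subset y+\Xiup^{k}_{s}$. Multiplying both sides by $2$ and translating by $\dot\tau$ yields
\[
\tau=\dot\tau+2\sigma\;\subset\;\dot\tau+2y+2\,\Xiup^{k}_{s},
\]
so the vector $x:=\dot\tau+2y\in\IZ^{1+k}$ is the required shift and $\tau\subset x+2\,\Xiup^{k}_{s}$.

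There is essentially no obstacle in the argument itself once Lemma~\ref{perebor} and the existence of $T$ are in hand; both have been established above. The only point worth verifying carefully (and which I would spell out in one line) is that the operations ``divide by two, apply Lemma~\ref{perebor}, multiply by two, translate back'' commute with taking shifts and with the homothety, i.e.\ that $2(y+\Xiup^k_s)=2y+2\Xiup^k_s$ and $\dot\tau+2y\in\IZ^{1+k}$ (which holds since $\dot\tau\in(2\IZ)^{1+k}\subset\IZ^{1+k}$ and $y\in\IZ^{1+k}$). Since $\tau$ was an arbitrary simplex of $T$, this proves the claim.
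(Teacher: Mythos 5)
Your proposal is correct and is exactly the argument the paper intends: the paper simply states that ``the choice of the triangulation $T$ combined with Lemma~\ref{perebor} implies'' the claim, and your rescaling computation $\tau=\dot\tau+2\sigma\subset\dot\tau+2y+2\Xiup^k_s$ is the straightforward way to make that implication explicit.
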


Let $\Delta$ be (the geometric realization of) a simplex in $\IR^k$ with vertices $w_0,\dots,w_k$ such that $w_0+\dots +w_k=0$. The latter equality means that $\Delta$ is centered at the origin (which lies in the interior of $\Delta$). By $\Delta^{(0)}=\{w_0,\dots,w_k\}$ we denote the set of vertices of the simplex $\Delta$.

Each point $y\in\Delta$ can be uniquely written as the convex combination $y=\sum_{i=0}^k y_iw_i$ for some non-negative real numbers $y_0,\dots,y_k$ with $\sum_{i=0}^ky_i=1$. The set $$\supp(y)=\{i\in\{0,\dots,k\}:y_i\ne 0\}$$ is called the {\em support} of $y$. It is clear that $\supp(y)$ is the smallest subset of $\Delta^{(0)}$ whose convex hull contains the point $y$.

Identifying each number $i\in\{0,\dots,k\}$ with the vertex $w_i$ of $\Delta$, we can think of the coloring $\chi:\IZ^{1+k}\to \{0,\dots,k\}$ as a function $\chi:\IZ^{1+k}\to \Delta^{(0)}=\{w_0,\dots,w_k\}$.
 
Now extend the restriction $\chi|\partial K\cap(2\IZ)^{1+k}$ of $\chi$ to a simplicial map $f:\partial K\to \Delta$ (which is affine on the convex hull of each simplex $\tau\in T$). The simpliciality of $f$ implies:

\begin{claimm}\label{cl4} For each simplex $\tau\in T$ and a point $x\in\conv(\tau)$ $$\supp(f(x))\subset \chi(\tau)\subset\chi({\lfloor}x{\rfloor}+2\cdot \2^{J_x}).$$
\end{claimm}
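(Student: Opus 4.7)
The two inclusions will be handled separately: the first, $\supp(f(x))\subset\chi(\tau)$, is a routine consequence of the affineness of $f$ on $\conv(\tau)$, while the second, $\chi(\tau)\subset\chi(\lfloor x\rfloor+2\cdot\2^{J_x})$, is the substantive part and rests on the $\Sigmaup_0$-structure of the simplices of $T$.

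For the first inclusion I would write $x=\sum_{v\in\tau}\lambda_v v$ as a convex combination of the vertices of $\tau$. Since $f$ is affine on $\conv(\tau)$ and coincides with $\chi$ on the vertex set $\tau\subset(2\IZ)^{1+k}$, one has $f(x)=\sum_v\lambda_v\chi(v)$ inside $\Delta$. Grouping by color, the coefficient of $w_j\in\Delta^{(0)}$ equals $\sum_{v:\chi(v)=w_j}\lambda_v$, which is strictly positive only when some $v\in\tau$ with $\chi(v)=w_j$ satisfies $\lambda_v>0$. Hence every $w_j\in\supp(f(x))$ lies in $\chi(\tau)$.

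For the second inclusion I would aim at the stronger set-theoretic statement $\tau\subset\lfloor x\rfloor+2\cdot\2^{J_x}$. Both defining conditions of a $\Sigmaup_0$-set (lying in a coordinate facet of $\2^{1+k}$, and having $\Sigmaup_0$-image in one of the two listed triangles) are preserved under taking subsets, so $T$ is closed under passing to faces. This allows me to assume without loss of generality that $x$ lies in the relative interior of $\tau$, reducing a general point $x\in\conv(\tau)$ to the interior case applied to the face of $\tau$ that supports $x$. By the construction of $T$ there exists $\dot\tau\in(2\IZ)^{1+k}$ with $\tau\subset\dot\tau+2\cdot\2^{1+k}$, so $v_i\in\{\dot\tau_i,\dot\tau_i+2\}$ for every $v\in\tau$ and every coordinate $i$.

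A coordinate case analysis then closes the argument. For $i\notin J_x$ the value $x_i$ is even, so the strictly positive convex combination $x_i=\sum_v\lambda_v v_i$ of the two-element set $\{\dot\tau_i,\dot\tau_i+2\}$ must be degenerate, forcing every vertex of $\tau$ to share the common value $x_i=\lfloor x\rfloor_i$ in coordinate $i$. For $i\in J_x$ the value $x_i$ is non-even, so it lies strictly between $\dot\tau_i$ and $\dot\tau_i+2$; this pins down $\lfloor x\rfloor_i=\dot\tau_i$ and gives $\{\dot\tau_i,\dot\tau_i+2\}=\{\lfloor x\rfloor_i,\lfloor x\rfloor_i+2\}$. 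Combining the two cases yields $\tau\subset\lfloor x\rfloor+2\cdot\2^{J_x}$, and applying $\chi$ produces the desired inclusion. The only delicate point I anticipate is the reduction to the interior case via the hereditary nature of $\Sigmaup_0$-sets; everything else is a direct case check.
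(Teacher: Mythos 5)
The paper offers no written proof of this claim (it is presented as an immediate consequence of the simpliciality of $f$), and your proposal supplies exactly the intended details: affineness of $f$ on $\conv(\tau)$ for the first inclusion, and the coordinate\-/wise comparison of $\lfloor x\rfloor$ and $J_x$ with the vertex coordinates $v_i\in\{\dot\tau_i,\dot\tau_i+2\}$ for the second. Both the convex\-/combination computation and the two coordinate cases are correct.

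There is, however, one genuine wrinkle in your ``without loss of generality'' step. Passing to the face $\sigma$ of $\tau$ that carries $x$ in its relative interior proves $\sigma\subset\lfloor x\rfloor+2\cdot\2^{J_x}$, but it does \emph{not} recover the asserted inclusion $\tau\subset\lfloor x\rfloor+2\cdot\2^{J_x}$ for the original simplex, and indeed that stronger statement is false for boundary points: if $x$ is a vertex of an edge $\tau=\{x,x+2\bold e_i\}$ with $x\in(2\IZ)^{1+k}$, then $J_x=\emptyset$ and $\lfloor x\rfloor+2\cdot\2^{J_x}=\{x\}$ omits the other vertex, so even $\chi(\tau)\subset\chi(\lfloor x\rfloor+2\cdot\2^{J_x})$ can fail when the two vertices have different colors. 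The correct chain is $\supp(f(x))\subset\chi(\sigma)\subset\chi(\lfloor x\rfloor+2\cdot\2^{J_x})$, where $\sigma$ is the carrier of $x$: your first\-/inclusion argument already shows $\supp(f(x))\subset\chi(\{v\in\tau:\lambda_v>0\})=\chi(\sigma)$, and your coordinate analysis applies verbatim to $\sigma$. Since the only consequences used later (in Claims~5 and~8) are $\supp(f(x))\subset\chi(\tau)$ and $\supp(f(x))\subset\chi(\lfloor x\rfloor+2\cdot\2^{J_x})$, your argument does establish everything that is needed; just do not assert the middle inclusion for $\tau$ itself, only for its carrier face.
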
 

This Claim has the following corollary. 

\begin{claimm}\label{cl5} $f(\partial K)\subset\partial \Delta$.
\end{claimm}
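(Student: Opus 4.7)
The plan is to reduce Claim~\ref{cl5} to the statement that $\chi(\tau)$ is a \emph{proper} subset of $\{0,\dots,k\}=\Delta^{(0)}$ for every simplex $\tau$ of the triangulation $T$. Indeed, once this is established, Claim~\ref{cl4} gives $\supp(f(x))\subset\chi(\tau)\subsetneq\Delta^{(0)}$ for every $x\in\conv(\tau)$, so $f(x)$ lies in a proper face of $\Delta$ and hence in $\partial\Delta$. Since the simplices of $T$ cover $\partial K$, this yields $f(\partial K)\subset\partial\Delta$.

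To exhibit a color missing from $\chi(\tau)$, I would combine Claims~\ref{cl1} and~\ref{cl3}. Fix $\tau\in T$. By Claim~\ref{cl3} there is a shift $x_\tau\in\IZ^{1+k}$ with $\tau\subset x_\tau+2\Xiup^k_s$. Since $\Xiup^k_s\subset\{-1,0,1\}^{1+k}$ is a fixed finite set, the distance from $x_\tau$ to the simplex $\tau$ is bounded by a constant $d_k$ depending only on $k$. I would choose $n$ at the very beginning of the argument so large that the cube $K=[-2n,2n]^{1+k}$ contains a $d_k$-neighborhood of $B$ in its interior; because $K=-K$, this forces both $x_\tau$ and $-x_\tau$ to avoid $B$.

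With this choice of $n$, I apply Claim~\ref{cl1} to the point $-x_\tau\notin B$ to obtain
$$\chi(-x_\tau)\notin\chi(x_\tau+2\Xiup^k_s)\supset\chi(\tau).$$
Hence $\chi(-x_\tau)$ is missing from $\chi(\tau)$, so $\chi(\tau)\subsetneq\Delta^{(0)}$, as desired.

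The only delicate point is the initial choice of $n$, which has to be large enough to absorb both the fixed shift coming from $2\Xiup^k_s$ and the reflection $x_\tau\mapsto -x_\tau$; this is a uniform bound depending only on $k$ and on the diameter of the bounded set $B$. With that in place, the argument is a routine composition of Claims~\ref{cl1}, \ref{cl3}, and~\ref{cl4} (with Lemma~\ref{perebor} doing the combinatorial heavy lifting through Claim~\ref{cl3}), and I do not foresee any genuine obstacle.
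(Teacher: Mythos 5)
Your argument is correct and is essentially the paper's own proof: the paper likewise covers each simplex $\tau$ by a shift of $2\Xiup^k_s$ via Claim~\ref{cl3} (Lemma~\ref{perebor}), applies Claim~\ref{cl1} (in the form of Claim~\ref{cl2}) to the reflected center of that shift to produce a color missing from $\chi(\tau)$, and then uses the simpliciality of $f$ to place $f(\conv(\tau))$ in the corresponding proper face of $\Delta$. Your explicit requirement that $K$ contain a $d_k$-neighborhood of $B$ is a sensible tightening of a detail the paper leaves implicit in its choice of $n$.
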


\begin{proof} Given any point $x\in\partial K$, find a simplex $\tau\in T$ whose convex hull contains $x$. By the choice of the triangulation $T$ and Lemma~\ref{perebor}, $\tau\subset -y+2\Xiup^k_s$ for some point $y\in \IZ^{1+k}$. By Claim~\ref{cl2}, $\chi(-y)\notin\chi(\tau)$ and thus
$$f(x)\in\conv(f(\tau))=\conv(\chi(\tau))\subset\conv(\Delta^{(0)}\setminus\chi(-y))\subset\partial\Delta.$$
\end{proof}

Now consider the intersection $K_0=\{0\}\times[-2n,2n]^k$ of the cube $K$ with the hyperplane $\{0\}\times \IR^k$, which will be identified with the space $\IR^k$, and let  $\partial K_0=\partial K\cap \IR^k$ be the boundary of $K_0$.

For each subset $J\subset k+1=\{0,\dots,k\}$ consider the map $$p_J:\IR^{1+k}\to\IR,\;p_J:(x_i)_{i=0}^k\mapsto 1\cdot \prod_{j\in J}x_j.$$Here we assume that $p_\emptyset(x)=1$.
It follows that $\sum_{J\subset k+1}p_J(x)>0$ for all $x\in [0,2]^{k+1}$.

We remind that for a point $x\in\IR^{1+k}$, $J_x=\{i\in\{0,\dots,k\}:x_i\notin 2\IZ\}$ and ${\lfloor}x{\rfloor}$ stands for the unique point in $(2\IZ)^{1+k}$ such that $x \in {\lfloor}x{\rfloor}+\2^{J_x}$ where $\2^J=\{(x_i)_{i=0}^k\in\2^{k+1}:\forall i\notin J\;(x_i=0)\}$.  

Now consider the map $\varphi:\partial K_0\to \Delta$ defined by the formula:
$$\varphi(x)=\frac{\sum\limits_{J\subset k+1}p_J(x-{\lfloor}x{\rfloor})\cdot \chi({\lfloor}x{\rfloor}+\bold e_J)}{\sum\limits_{J\subset k+1}p_J(x-{\lfloor}x{\rfloor})}.$$
It can be shown that the map $\varphi$ is well-defined and continuous. 

\begin{claimm}\label{cl6} $\supp(\varphi(x))=\chi({\lfloor}x{\rfloor}+2\cdot \2^{J_x})\subset\chi(\lfloor x\rfloor+2\Xiup^k_s)$ for all $x\in \partial K_0$.
\end{claimm}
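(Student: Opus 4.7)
The plan is to read off the support directly from the definition of $\varphi$ and then invoke the special geometry of $\partial K_0$ to upgrade $2\cdot\2^{J_x}$ to the larger target $2\Xiup^k_s$. First I would set $y = x - \lfloor x \rfloor$ and observe that its coordinates are strictly positive on $J_x$ and equal to zero off $J_x$; this makes the monomial weight $p_J(y) = \prod_{j \in J} y_j$ strictly positive precisely for $J \subset J_x$ and zero otherwise. That confirms both that the denominator in the definition is positive (so $\varphi$ is well-defined and continuous on $\partial K_0$) and that $\varphi(x)$ is a convex combination of the vertices $\chi(\lfloor x\rfloor + 2\bold e_J)$ of $\Delta$, $J \subset J_x$, with strictly positive normalized weights $\alpha_J(x)$.

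The next step is to translate this into a statement about $\supp$. After identifying each color $i$ with the vertex $w_i$ of $\Delta$ and collecting terms by color, the coefficient of $w_i$ in $\varphi(x)$ equals the sum of the weights $\alpha_J(x)$ over those $J \subset J_x$ with $\chi(\lfloor x\rfloor + 2\bold e_J) = i$. Since every such weight is strictly positive, this coefficient is nonzero if and only if $i$ appears in the image $\chi(\{\lfloor x\rfloor + 2\bold e_J : J \subset J_x\}) = \chi(\lfloor x\rfloor + 2\cdot\2^{J_x})$, yielding the asserted equality $\supp(\varphi(x)) = \chi(\lfloor x\rfloor + 2\cdot\2^{J_x})$.

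For the inclusion $\chi(\lfloor x\rfloor + 2\cdot\2^{J_x}) \subset \chi(\lfloor x\rfloor + 2\Xiup^k_s)$ I would reduce to verifying the set-theoretic containment $\2^{J_x} \subset \Xiup^k_s$ for every $x \in \partial K_0$. Here the boundary hypothesis is essential: the condition $x_0 = 0 \in 2\IZ$ forces $0 \notin J_x$, and the condition that $x$ lies on some facet $\{x_i = \pm 2n\}$ of $K_0$ with $i \ge 1$ forces such an $i$ to be absent from $J_x$ as well. Hence $J_x$ is a proper subset of $\{1, \dots, k\}$, so $|J_x| \le k - 1$, and every $v \in \2^{J_x}$ has $v_0 = 0$ and $\Sigmaup(v) = \sum_{i=1}^{k} v_i \le |J_x| < k$, placing $v$ inside $\{0\} \times \2^k_{<k} \subset \Xiup^k_s$. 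The only genuinely geometric step is this last one---$\partial K_0$ is rigged precisely to exclude the single ``all-ones'' vertex $\bold e_{\{1,\dots,k\}}$ of $\2^{1+k}$ whose inclusion would violate the containment---and the rest is bookkeeping with the convex-combination formula.
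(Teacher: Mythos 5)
Your proof is correct and follows essentially the same route as the paper: read off $\supp(\varphi(x))$ from the positivity pattern of the weights $p_J(x-\lfloor x\rfloor)$, then use $x_0=0$ and the facet condition to get $0\notin J_x$ and $|J_x|<k$, hence $\2^{J_x}\subset\{0\}\times\2^k_{<k}\subset\Xiup^k_s$. You merely make explicit the two steps the paper treats as immediate (the positivity/grouping-by-color argument for the equality, and the reason $|J_x|<k$), and you correctly resolve the paper's $\bold e_J$ versus $2\bold e_J$ notational slip in the definition of $\varphi$.
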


\begin{proof} Let $x\in\partial K_0$ be any point. The definition of $\varphi$ implies that $\supp(\varphi(x))=\chi({\lfloor}x{\rfloor}+\2^{J_x})$. The inclusion $x\in \partial K_0$ implies that the set $J_x=\{j\in\{0,\dots,k\}:\pr_j(x)\notin 2\IZ\}$ has cardinality $|J_x|<k$ and thus $\2^{J_x}\subset \{0\}\times \2^k_{<k}\subset\Xiup^k_s$. Consequently, $\lfloor x\rfloor+2\cdot\2^{J_x}\subset \lfloor x\rfloor +2\Xiup^k_s$ and $\chi(\lfloor x\rfloor+2\cdot\2^{J_x})\subset \chi(\lfloor x\rfloor +2\Xiup^k_s)$.
\end{proof}

\begin{claimm}\label{cl7} $\varphi(x)\ne \varphi(-x)$ for all $x\in \partial K_0$.
\end{claimm}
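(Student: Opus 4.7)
The plan is to derive a contradiction from the assumption $\varphi(x)=\varphi(-x)$ by producing a color that lies in $\supp\varphi(-x)$ but not in $\supp\varphi(x)$. The natural candidate is the color $\chi(-\lfloor x\rfloor)$.

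First I would check that $\lfloor x\rfloor\in\IZ^{1+k}\cap\partial K$, so that Claim~\ref{cl2} is applicable. Since $x\in\partial K_0\subset\partial K$, some coordinate satisfies $x_i=\pm 2n$; this is even, hence fixed by the floor operation, so $\lfloor x\rfloor_i=\pm 2n$ and $\lfloor x\rfloor\in\partial K$. Claim~\ref{cl2} applied with $y=\lfloor x\rfloor$ then gives $\chi(-\lfloor x\rfloor)\notin\chi(\lfloor x\rfloor+2\Xiup^k_s)$. On the other hand, Claim~\ref{cl6} embeds $\supp\varphi(x)\subset\chi(\lfloor x\rfloor+2\Xiup^k_s)$, so in particular $\chi(-\lfloor x\rfloor)\notin\supp\varphi(x)$.

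Next I would show $\chi(-\lfloor x\rfloor)\in\supp\varphi(-x)$ by identifying $-\lfloor x\rfloor$ as a vertex of the $-x$-cube $\lfloor -x\rfloor+2\cdot\mathbf 2^{J_{-x}}$. Unwinding the definition of the floor operator, one checks that $J_{-x}=J_x$ and $\lfloor -x\rfloor=-\lfloor x\rfloor-2\mathbf e_{J_x}$, so
$$-\lfloor x\rfloor=\lfloor -x\rfloor+2\mathbf e_{J_x}\in\lfloor -x\rfloor+2\cdot\mathbf 2^{J_{-x}}.$$
Applying Claim~\ref{cl6} to $-x$ then yields $\chi(-\lfloor x\rfloor)\in\chi(\lfloor -x\rfloor+2\cdot\mathbf 2^{J_{-x}})=\supp\varphi(-x)$. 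Combined with the previous paragraph, $\chi(-\lfloor x\rfloor)$ separates the two supports, contradicting the equality of $\varphi(x)$ and $\varphi(-x)$.

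The argument is essentially a two-line combination of Claims~\ref{cl2} and~\ref{cl6}; the only place requiring genuine care is the arithmetic of the floor operator, namely the identities $J_{-x}=J_x$ and $\lfloor -x\rfloor=-\lfloor x\rfloor-2\mathbf e_{J_x}$, and the verification that any boundary coordinate $\pm 2n$ of $x$ is preserved by $\lfloor\cdot\rfloor$, which is what puts $\lfloor x\rfloor$ into $\partial K$ and lets Claim~\ref{cl2} fire. I do not anticipate any deeper obstacle.
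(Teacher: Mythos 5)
Your argument is correct and is essentially the paper's own proof: both identify $\chi(-\lfloor x\rfloor)$ as the color separating $\supp\varphi(-x)$ from $\supp\varphi(x)$ via the identities $J_{-x}=J_x$, $\lfloor -x\rfloor=-\lfloor x\rfloor-2\mathbf e_{J_x}$ and Claim~\ref{cl6}. The only cosmetic difference is that you invoke Claim~\ref{cl2} at $\lfloor x\rfloor\in\IZ^{1+k}\cap\partial K$ while the paper cites Claim~\ref{cl1} directly at $-\lfloor x\rfloor\notin B$; these are interchangeable, and your explicit check that $\lfloor x\rfloor$ stays on $\partial K$ is a welcome extra detail.
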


\begin{proof} Observe that $J_x=J_{-x}$ and $\lfloor -x\rfloor=-{\lfloor}x{\rfloor}-2\bold e_{J_x}$. By Claim~\ref{cl6},
$$\chi(-{\lfloor}x{\rfloor})=\chi(\lfloor -x\rfloor+2\cdot\bold e_{-J_x})\in\chi([-x]+2\cdot\2^{J_{-x}})=\supp(\varphi(-x)).$$
On the other hand, Claim~\ref{cl1} guarantees that
$$\chi(-{\lfloor}x{\rfloor})\not\ni\chi({\lfloor}x{\rfloor}+2\Xiup^k_s) \supset\chi({\lfloor}x{\rfloor}+2\cdot \2^{J_x})=\supp(\varphi(x)) 
.$$
Consequently, $\supp(\varphi(-x))\ne\supp(\varphi(x))$ and $\varphi(x)\ne \varphi(-x)$.
\end{proof} 

Finally, consider the homotopy $$(f_t):\partial K_0\times [0,1]\to\Delta,\quad f_t:x\mapsto t\varphi(x)+(1-t)f(x),$$
connecting the map $f=f_0$ with the map $\varphi=f_1$.

\begin{claimm}\label{cl8} $\supp(f_t(x))\subset\chi({\lfloor}x{\rfloor}+2\cdot\2^{J_x})\subset\partial\Delta$ for all $x\in\partial K_0$ and $t\in[0,1]$.
\end{claimm}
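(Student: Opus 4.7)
The plan is to combine the bounds on the supports of $f(x)$ and $\varphi(x)$ supplied by Claims~\ref{cl4} and~\ref{cl6} with the fact that $f_t(x)$ is a convex combination of these two points of $\Delta$. Writing $f(x)=\sum_{i=0}^k\beta_iw_i$ and $\varphi(x)=\sum_{i=0}^k\alpha_iw_i$ in barycentric coordinates, the coefficient of $w_i$ in $f_t(x)=t\varphi(x)+(1-t)f(x)$ equals $t\alpha_i+(1-t)\beta_i$, which vanishes whenever it vanishes in both $f(x)$ and $\varphi(x)$ (since $t,1-t\ge 0$); consequently $\supp(f_t(x))\subset\supp(f(x))\cup\supp(\varphi(x))$.

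To bound each summand, fix $x\in\partial K_0$ and choose a simplex $\tau\in T$ whose convex hull contains $x$. Claim~\ref{cl4} gives $\supp(f(x))\subset\chi(\tau)\subset\chi(\lfloor x\rfloor+2\cdot\2^{J_x})$, while Claim~\ref{cl6} supplies the equality $\supp(\varphi(x))=\chi(\lfloor x\rfloor+2\cdot\2^{J_x})$. Taking the union of these bounds yields the first desired inclusion $\supp(f_t(x))\subset\chi(\lfloor x\rfloor+2\cdot\2^{J_x})$.

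The conclusion $f_t(x)\in\partial\Delta$ I would then deduce by exhibiting a vertex of $\Delta$ missing from $\chi(\lfloor x\rfloor+2\cdot\2^{J_x})$. The key observation is that $\lfloor x\rfloor\in\IZ^{1+k}\cap\partial K$: indeed $\pr_0(x)=0\in 2\IZ$ forces $\pr_0(\lfloor x\rfloor)=0$, and since $x\in\partial K_0$ some coordinate $\pr_j(x)=\pm 2n$ for $j\in\{1,\dots,k\}$ belongs to $2\IZ$, so $\pr_j(\lfloor x\rfloor)=\pm 2n$. Hence Claim~\ref{cl2} applies to $\lfloor x\rfloor$, yielding $\chi(-\lfloor x\rfloor)\notin\chi(\lfloor x\rfloor+2\Xiup^k_s)$. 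Since both indices $0$ and $j$ lie outside $J_x$ we have $|J_x|\le k-1$, so $\2^{J_x}\subset\{0\}\times\2^k_{<k}\subset\Xiup^k_s$, and therefore $\chi(\lfloor x\rfloor+2\cdot\2^{J_x})\subset\chi(\lfloor x\rfloor+2\Xiup^k_s)$ also omits the vertex $\chi(-\lfloor x\rfloor)$. Thus $\supp(f_t(x))$ is a proper subset of $\Delta^{(0)}$, so $f_t(x)$ lies on a proper face of $\Delta$, contained in $\partial\Delta$.

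The main obstacle is matching the descriptions of $\supp(f(x))$ and $\supp(\varphi(x))$ produced by the rather different constructions of $f$ (piecewise affine on the triangulation $T$) and $\varphi$ (multilinear interpolation on the cubical lattice), and verifying that the rounding $\lfloor x\rfloor$ actually lands in $\partial K$ so that Claim~\ref{cl2} is applicable. Once this common description $\chi(\lfloor x\rfloor+2\cdot\2^{J_x})$ is secured for both maps, the exclusion of a single vertex by Claim~\ref{cl2} finishes the argument.
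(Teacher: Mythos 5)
Your proof is correct and follows essentially the same route as the paper: the first inclusion comes from combining Claims~\ref{cl4} and~\ref{cl6} via $\supp(f_t(x))\subset\supp(f(x))\cup\supp(\varphi(x))$, and the boundary conclusion from $|J_x|<k$, the inclusion $\2^{J_x}\subset\{0\}\times\2^k_{<k}\subset\Xiup^k_s$, and the exclusion of the vertex $\chi(-\lfloor x\rfloor)$. The only cosmetic difference is that you invoke Claim~\ref{cl2} (after carefully checking $\lfloor x\rfloor\in\IZ^{1+k}\cap\partial K$) where the paper cites Claim~\ref{cl1} directly; your extra verification is a welcome detail the paper leaves implicit.
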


\begin{proof} The inclusion $\supp(f_t(x))\subset\chi({\lfloor}x{\rfloor}+2\cdot\2^{J_x})$ follows from Claims~\ref{cl4} and \ref{cl6}. 

The inclusion $x\in \partial K_0$ implies that the set $J_x=\{j\in\{0,\dots,k\}:\pr_j(x)\notin 2\IZ\}$ has cardinality $|J_x|<k$ and thus $\2^{J_x}\subset \{0\}\times \2^k_{<k}\subset\Xiup^k_s$. By Claim~\ref{cl1}, $\chi(-{\lfloor}x{\rfloor})\notin \chi({\lfloor}x{\rfloor}+2\Xiup^k_s)$ and then $$
\begin{aligned}
f_t(x)&\in\conv(\supp(f_t(x))\subset\conv(\chi({\lfloor}x{\rfloor}+2\cdot \2^{J_x}))\subset\\
&\subset\conv(\chi({\lfloor}x{\rfloor}+2\Xiup^k_s))\subset\conv(\Delta^{(0)}\setminus\chi({-}{\lfloor}x{\rfloor}))\subset\partial\Delta.
\end{aligned}
$$
\end{proof}

Let $S^{k-1}=\{x\in\IR^k:\|x\|=1\}$ be the unit sphere in $\IR^k$ with respect to the Euclidean norm $\|\cdot\|$ and $r:\IR^k\setminus\{0\}\to S^{k-1}$, $r:x\mapsto x/\|x\|$, be the radial retraction. Observe that its restriction $r|\partial \Delta$ to the  boundary of the geometric simplex $\Delta$ is a homeomorphism.
\smallskip

By Claim~\ref{cl5}, $f(\partial K)\subset\partial \Delta\subset\IR^k\setminus\{0\}$, so we can consider the map $g_0:\partial K\to S^{k-1}$ defined by $g_0(x)\mapsto r\circ f(x)=f(x)/\|f(x)\|$. By Claim~\ref{cl8}, the map $g_0|\partial K_0$ is homotopic to the map $$g_1:\partial K_0\to S^{k-1},\;\;g_1(x)\mapsto r\circ f_1(x)=r\circ\varphi(x).$$ 
It follows from Claim~\ref{cl7} that $g_1(x)\ne g_1(-x)$ for all $x\in \partial K_0$. 
This implies that the formula $$h_t(x)=\frac{g_1(x)-tg_1(-x)}{\|g_1(x)-tg_1(-x)\|},\quad x\in\partial K_0,\;t\in[0,1],$$ determines a well-defined homotopy $(h_t):\partial K_0\to S^{k-1}$ connecting the map $g_1$ with the map $$h_1(x)=\dfrac{g_1(x)-g_1(-x)}{\|g_1(x)-g_1(-x)\|},$$ which is antipodal in the sense that $h_1(-x)=-h_1(x)$. By \cite[Chap.4,\S7.10]{Spa}, each antipodal map between spheres of the same dimension is not homotopically trivial. Consequently, the antipodal map $h_1:\partial K_0\to S^{k-1}$ is not homotopically trivial. On the other hand, $h_1$ is homotopic to the map $h_0=g_1$, which is homotopic to  
$g_0|\partial K_0$ and the latter map is homotopically trivial since the boundary $\partial K_0$ of the cube $K_0$ is contractible in the boundary $\partial K$ of $K$. This contradiction completes the proof of Theorem~\ref{sandwich}.

\section{$T$-shaped sets in $\IR^n$}\label{s:Tshape}

Theorem~\ref{sandwich}, proved in the preceding section, yields an upper bound for the numbers $c_k(\IZ^k)$. A lower bound for the numbers $c_k^B(\IR^k)$ will be obtained by the technique of $T$-shaped sets created in \cite{Ba}.

Let $\IR_+=[0,\infty)$ be the closed half-line. For every $n\ge 0$ consider the subset $T_0\subset \IR^0$ defined inductively: $$T_0=\emptyset\subset\IR^0=\{0\},\;\;\;T_1=\{0\}\subset\IR^1, \mbox{ \ and \ }T_n=\big(\IR^{n-1}\times\{0\}\big)\cup \big(T_{n-1}\times\IR_+\big)\subset \IR^n$$ for $n>1$.

\begin{definition} A subset $C\subset \IR^n$ is called {\em $T$-shaped} if $f(C)\subset \IR\times T_{n-1}$ for some affine transformation $f:\IR^n\to\IR\times\IR^{n-1}$. 
The smallest cardinality of a subset $A\subset\IR^n$, which is not $T$-shaped is denoted by $t(\IR^n)$.
\end{definition} 

Let us describe the geometric structure of $T$-shaped sets.

We say that for $k\le n$, hyperplanes $H_1,\dots,H_k$ in $\IR^{n}$ are in {\em general position}
if they are pairwise distinct and their normal vectors are linearly independent. 
This happens if and only if there is an affine transformation $f:\IR^{n}\to\IR^{n}$ that maps the $i$-th hyperplane onto the hyperplane $\IR^{i-1}\times\{0\}\times\IR^{n-i}$ for all $i\in\{1,\dots,k\}$. 

We shall say that a hyperplane $H\subset \IR^n$ {\em does not separate} a subset $S\subset\IR^{n+1}$ if $S$ lies in one of two closed half-spaces bounded by the hyperplane $H$. Such a hyperplane $H$ will be called {\em non-separating} for $S$.
A hyperplane $H$ is called a {\em support hyperplane} for $S$ if $H\cap S\ne\emptyset$ and $H$ does not separate $S$.

\begin{proposition}\label{Tshape-char} Let $n\in\IN$. A subset $S\subset\IR^{n+1}$ is $T$-shaped if and only if $$S\subset H_1\cup \dots\cup H_n$$ for some hyperplanes $H_1,\dots,H_n$ in general position such that each hyperplane $H_i$, $1\le i\le n$, does not separate the set $S\setminus(H_1\cup\dots\cup H_{i-1})$.
\end{proposition}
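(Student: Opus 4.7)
The plan is to prove both implications by first unfolding the recursive definition of $T_n$ explicitly as a union of coordinate half-hyperplanes in $\IR^{n+1}$, and then matching this explicit description with the geometric condition in the proposition through a suitable affine isomorphism. By induction on $n$ one obtains the closed-form description
$$T_n=\bigcup_{i=1}^n\{\,y\in\IR^n:y_i=0\text{ and }y_j\ge 0\text{ for every }j>i\,\}.$$
Viewing $\IR^{n+1}=\IR\times\IR^n$ with coordinates $(x_0,y_1,\dots,y_n)$ and setting $\tilde H_j=\{y_j=0\}$, this gives $\IR\times T_n\subset\tilde H_1\cup\dots\cup\tilde H_n$, together with the following non-separation cascade: the hyperplane $\tilde H_n$ is non-separating for $\IR\times T_n$ (every point satisfies $y_n\ge 0$), and more generally $\tilde H_{n+1-i}$ is non-separating for $(\IR\times T_n)\setminus(\tilde H_n\cup\dots\cup\tilde H_{n+2-i})$, since once $y_n,\dots,y_{n+2-i}$ are strictly positive the defining disjunction forces $y_{n+1-i}\ge 0$.

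For the $(\Rightarrow)$ direction I would take an affine isomorphism $f\colon\IR^{n+1}\to\IR\times\IR^n$ witnessing $f(S)\subset\IR\times T_n$ and simply set $H_i:=f^{-1}(\tilde H_{n+1-i})$ for $i=1,\dots,n$. General position is preserved by $f^{-1}$ because the $\tilde H_j$ have linearly independent normals, and the non-separation cascade for $\IR\times T_n$ pulls back through $f$ to exactly the cascade required by the proposition.

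For the $(\Leftarrow)$ direction I would build $f$ by hand. Choose linear functionals $\phi_i$ and constants $c_i$ with $H_i=\phi_i^{-1}(c_i)$, orienting the signs so that $S\setminus(H_1\cup\dots\cup H_{i-1})\subset\{\phi_i\ge c_i\}$ (which is possible by the non-separation hypothesis). General position makes $\phi_1,\dots,\phi_n$ linearly independent in the dual space of $\IR^{n+1}$; pick any $\phi_0$ completing them to a dual basis and set
$$f(x)=\bigl(\phi_0(x)-c_0,\;\phi_n(x)-c_n,\;\phi_{n-1}(x)-c_{n-1},\;\dots,\;\phi_1(x)-c_1\bigr).$$
To verify $f(S)\subset\IR\times T_n$, fix $x\in S$ and let $i^*=\min\{i:x\in H_i\}$, which exists because $S\subset H_1\cup\dots\cup H_n$. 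Minimality places $x$ inside $S\setminus(H_1\cup\dots\cup H_{l-1})\subset\{\phi_l\ge c_l\}$ for every $l\le i^*$, so in $f(x)$ the coordinate coming from $l=i^*$ vanishes while all coordinates coming from smaller indices $l<i^*$ are non-negative, which is precisely the condition for $f(x)\in\IR\times T_n$ by the unfolded description of $T_n$.

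The main obstacle I expect is purely combinatorial bookkeeping: the order in which the proposition peels off hyperplanes is reversed from the order in which the coordinate hyperplanes $\tilde H_j$ are stripped out of $\IR\times T_n$, so one has to carefully track the reindexing $i\leftrightarrow n+1-i$ and fix the signs of the defining functionals accordingly. Once these conventions are fixed consistently, the verification in both directions becomes essentially routine.
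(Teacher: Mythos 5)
Your proof is correct and follows essentially the same route as the paper, whose entire argument is the remark that the proposition ``can be easily derived from'' the equality $\IR\times T_n=\bigcup_{i=0}^{n-1}\IR^{n-i}\times\{0\}\times\IR^i_+$ --- your unfolded description of $T_n$ is exactly this decomposition, reindexed. You merely carry out in detail the two routine verifications (the pullback of the coordinate hyperplanes and the construction of $f$ from the oriented functionals) that the paper leaves to the reader.
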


\begin{proof} This proposition can be easily derived from the equality
$$\IR\times T_n=\bigcup_{i=0}^{n-1}\,\IR^{n-i}\times\{0\}\times\IR^i_+$$that can be easily proved by induction on $n$.
\end{proof}

By Lemma 7 of \cite{Ba}, $T$-shaped subsets of Euclidean spaces $\IR^k$ are $k$-centerpole for Borel colorings. Consequently, $t(\IR^n)\le c_n^B(\IR^n)$. This gives us a lower bound for the numbers $c_k^B(\IR^n)$ and $c_k(\IR^n)$:

\begin{proposition}\label{p1} $t(\IR^k)\le c_k^B(\IR^k)\le c_k^B(\IR^n)\le c_k(\IR^n)$ for any finite $k\le n$.
\end{proposition}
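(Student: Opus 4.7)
The chain decomposes into three inequalities verified independently. The leftmost bound $t(\IR^k)\le c_k^B(\IR^k)$ is the essential step: my plan is to invoke Lemma~7 of \cite{Ba}, cited in the paragraph preceding the proposition, which (in the form used there) shows that every $T$-shaped subset of $\IR^k$ fails to be $k$-centerpole for Borel colorings. Consequently, any Borel $k$-centerpole set $C\subset\IR^k$ must be non-$T$-shaped, and by the definition of $t(\IR^k)$ as the least cardinality of a non-$T$-shaped subset of $\IR^k$ we obtain $|C|\ge t(\IR^k)$. Taking the infimum over all such $C$ yields $c_k^B(\IR^k)\ge t(\IR^k)$.

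The middle inequality $c_k^B(\IR^k)\le c_k^B(\IR^n)$ is to be obtained from the monotonicity of the Borel centerpole number with respect to group inclusion, as recorded in the introduction after Definition~\ref{d1}, applied to the canonical closed embedding $\IR^k\hookrightarrow\IR^n$ available for $k\le n$. The rightmost bound $c_k^B(\IR^n)\le c_k(\IR^n)$ is the trivial comparison from the same passage: every Borel $k$-coloring is a $k$-coloring, so every $k$-centerpole set for arbitrary colorings is \emph{a fortiori} $k$-centerpole for Borel colorings, and hence the infimum over the wider Borel class is no larger than the infimum over the narrower general class.

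The main obstacle is the first link, where the geometric $T$-shape obstruction has to be imported from \cite{Ba}; the Borel coloring of $\IR^k$ witnessing the failure of centerpoleness for $T$-shaped $C$ is precisely the nontrivial ingredient, and the lower bound $t(\IR^k)\le c_k^B(\IR^k)$ is essentially a repackaging of that construction via the definition of $t(\IR^k)$. The remaining two links are formal consequences of the monotonicity and definitional remarks already assembled in the introduction, so no further analysis is required for them.
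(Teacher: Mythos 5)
Your treatment of the two outer inequalities is fine and agrees with what the paper does. For $t(\IR^k)\le c_k^B(\IR^k)$ the paper likewise just cites Lemma~7 of \cite{Ba}, and you have correctly read that lemma as saying that $T$-shaped sets \emph{fail} to be $k$-centerpole for Borel colorings (the paper's sentence drops the negation, but only your reading makes the implication ``$k$-centerpole $\Rightarrow$ non-$T$-shaped $\Rightarrow$ cardinality $\ge t(\IR^k)$'' go through). The comparison $c_k^B(\IR^n)\le c_k(\IR^n)$ is indeed the trivial remark recorded in the introduction.

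The middle link is where your argument breaks. The monotonicity stated after Definition~\ref{d1} is $c_k^B(H)\ge c_k^B(G)$ for a subgroup $H$ of $G$: a $k$-centerpole set of a subgroup remains $k$-centerpole in the ambient group, so enlarging the group can only \emph{decrease} the centerpole number. Applied to the embedding $\IR^k\hookrightarrow\IR^n$ this yields $c_k^B(\IR^k)\ge c_k^B(\IR^n)$, the reverse of the inequality you assert; your derivation therefore proves the opposite of what it claims. There is no formal route from the stated monotonicity to $c_k^B(\IR^k)\le c_k^B(\IR^n)$: combined with the genuine monotonicity it would force $c_k^B(\IR^k)=c_k^B(\IR^n)$ for all $n\ge k$, which is essentially one of the open problems posed at the end of the paper. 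The chain printed in Proposition~\ref{p1} appears to contain a misprint — every later application uses either the diagonal case $n=k$ or the subgroup direction, as in $t(\IR^{n+m})\le c_k^B(\IR^{n+m})\le c_k^B(\IR^n\times\IZ^m)\le c_k(\IR^n\times\IZ^m)\le c_k(\IZ^m)$ in Section~\ref{Pf:exact}, and never $c_k^B(\IR^k)\le c_k^B(\IR^n)$ with $n>k$. So you should either restrict the chain to $t(\IR^k)\le c_k^B(\IR^k)\le c_k(\IR^k)$ or replace the larger group by a subgroup of $\IR^k$; as written, this step is not justified, and flagging the direction of the monotonicity is exactly the check your proposal skipped.
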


In the following theorem we collect all the available information on the numbers $t(\IR^n)$.

\begin{theorem}\label{t5}
 \begin{enumerate}
\item $t(\IR^1)=1$,
\item $t(\IR^2)=3$, 
\item $t(\IR^3)=6$, 
\item $t(\IR^4)=12$, 
\smallskip

\item $t(\IR^n)\le n^2-n+1$ for every $n\ge 1$;
\item $t(\IR^{n})\ge t(\IR^{n-1})+n+1$ for any $n\ge 4$,
\item $t(\IR^n)\ge \frac12(n^2+3n-4)$ for any $n\ge 4$.
\end{enumerate}
\end{theorem}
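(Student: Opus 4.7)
Parts (1)--(2) unwind directly from the recursive definition of $T_n$. In $\IR^1$ one has $\IR\times T_0=\IR\times\emptyset=\emptyset$, so only the empty set is $T$-shaped and $t(\IR^1)=1$. In $\IR^2$ one has $\IR\times T_1=\IR\times\{0\}$, a line, whose affine images are again lines; hence $T$-shaped sets in $\IR^2$ are exactly subsets of lines, and any three non-collinear points witness $t(\IR^2)=3$.

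For the upper bound (5) $t(\IR^n)\le n^2-n+1$, I would build a non-$T$-shaped witness inductively on $n$. Given $X_{n-1}\subset\IR^{n-1}$ of size $(n-1)^2-(n-1)+1=n^2-3n+3$ that is not $T$-shaped in $\IR^{n-1}$, embed it into a hyperplane $H\subset\IR^n$ and adjoin $2(n-1)$ further points, split symmetrically between the two open half-spaces bounded by $H$ and placed generically with respect to the affine hulls of subsets of $X_{n-1}$. By Proposition~\ref{Tshape-char}, any would-be cover $H_1\cup\dots\cup H_n$ must start with a non-separating hyperplane; the symmetric placement of the added points forces $H_1=H$, and then the residual $X_{n-1}$ would need to be covered by $n-1$ hyperplanes of $H$, contradicting the inductive hypothesis. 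The counting $(n^2-3n+3)+2(n-1)=n^2-n+1$ matches. For the sharper upper bounds in (3)--(4), rather than this generic construction I would use the sandwich sets $\Xiup^2_0$ (six points) and $\Xiup^3_1$ (twelve points) from Section~\ref{prepare}, verifying directly via Proposition~\ref{Tshape-char} that neither admits an allowable hyperplane cover.

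The lower bounds (3), (4), (6), (7) all share a common structural strategy. Let $C\subset\IR^n$ be a non-$T$-shaped set of minimum cardinality, and choose a supporting hyperplane $H$ of $\conv(C)$ maximizing $|C\cap H|$. The two key claims are: (i) $|C\cap H|\ge n+1$, for otherwise $C\cap H$ lies in an affine flat of codimension at least $2$, and $H$ can be rotated around that flat inside the closed half-space containing $C$ so as to absorb an additional point of $C$ while remaining non-separating, contradicting maximality; and (ii) the projection of $C\setminus H$ parallel to $H$ is not $T$-shaped in $\IR^{n-1}$, since otherwise its $(n-1)$-fold hyperplane cover pulls back and, together with $H$, $T$-shape-covers $C$ (support conditions being inherited from the open half-space geometry), violating the choice of $C$. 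Combining (i) and (ii) gives $|C|\ge t(\IR^{n-1})+n+1$, which is (6). Iterating from the base $t(\IR^3)=6$ yields $t(\IR^n)\ge\tfrac12(n^2+3n-6)$ for $n\ge 4$; closing the one-unit gap in (7) requires strengthening (i) to $|C\cap H|\ge n+2$ at the base case $n=4$, via an explicit exclusion of all 4-point supporting slices. Statement (3) lies outside the scope of (6) and needs a direct case analysis showing that any five points in $\IR^3$ can be covered by a plane through three of them together with a closed half-plane containing the remaining two, while statement (4) follows by combining (7) at $n=4$ with the construction $\Xiup^3_1$.

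\textbf{Main obstacle.} The delicate point is justifying (i): the rotation of a supporting hyperplane around a codimension-$2$ flat must be controlled so that it remains non-separating while monotonically gaining a new incidence with $C$, and the admissible direction of rotation depends on the geometry of $C$ on the support side. The refined base-case analysis needed for (7)---forcing $|C\cap H|\ge n+2$ when $n=4$---looks like the most painstaking step, since one must classify all $4$-point affinely independent configurations lying in a $3$-dimensional supporting hyperplane of $\IR^4$ and exclude each against the remaining points of $C$ using a compatible hyperplane configuration.
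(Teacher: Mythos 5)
Your parts (1)--(2) and the broad shape of your lower-bound induction (maximal supporting hyperplane plus projection) are in the spirit of the paper, and your remark that (3) needs a separate five-point analysis is correct. But there are genuine gaps. First, your key claim (i), that the supporting hyperplane $H$ maximizing $|C\cap H|$ satisfies $|C\cap H|\ge n+1$, is not delivered by the rotation argument and is false in general: $n$ affinely independent points of $C\cap H$ already span the $(n-1)$-dimensional hyperplane $H$, so there is no codimension-$2$ flat to rotate about, and for a set in general position every supporting hyperplane meets $C$ in at most $n$ points. The paper only uses $|C\cap H|\ge n$ and recovers the missing $+1$ differently: if $C\setminus H$ does not lie in a hyperplane parallel to $H$, it contains a chord $\vec{xy}$ transverse to $H$, and projecting along that chord collapses two points, giving $|\pr(C\setminus H)|\le |C|-|C\cap H|-1<t(\IR^{n-1})$; in the remaining case ($C\setminus H$ contained in a parallel supporting hyperplane) one only gets the alternative bound $2t(\IR^{n-1})$, which is then discharged using the crude estimate $t(\IR^{n-1})\ge\frac12n(n-1)\ge n+1$ for $n\ge4$. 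Without this case split and the collapsing projection your count falls one short.

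Second, and more seriously, the base case $t(\IR^4)\ge 12$ is where nearly all the work of the theorem lives (the paper's Lemma~\ref{l11}: every $11$-point subset of $\IR^4$ is $T$-shaped, proved through a chain of claims about parallel supporting hyperplanes and an octahedral middle layer), and you replace it with the suggestion to strengthen (i) to $|C\cap H|\ge 6$ when $n=4$. That points in the wrong direction: the paper proves (Claims~\ref{s1} and~\ref{s3}) that every supporting hyperplane of a putative $11$-point counterexample meets it in at most $4$ points, and the contradiction comes from the resulting $4+1+6$ layer structure, not from a large supporting slice. Note also that (7) is seeded at $n=4$ by (4), not at $n=3$: iterating (6) from $t(\IR^3)=6$ only gives $t(\IR^4)\ge 11$. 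Two smaller points: your inductive construction for (5) is both unnecessary (the paper simply takes $n^2-n+1$ points in general position, which cannot be covered by $n-1$ hyperplanes, whereas $\IR\times T_{n-1}$ lies in $n-1$ hyperplanes) and broken as written, since a hyperplane with added points strictly on both sides separates $C$ and therefore cannot serve as the first hyperplane of an admissible cover, contrary to your claim that the symmetric placement forces $H_1=H$. Finally, for the upper bounds in (3)--(4) the paper does not verify non-$T$-shapedness of the sandwiches directly but routes through Theorem~\ref{sandwich} and Proposition~\ref{p1} (centerpole sets cannot be $T$-shaped); your proposed direct verification would be an acceptable alternative but is not carried out.
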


\begin{proof} 1. Since $T_0=\emptyset$, a subset of $\IR^1$ is $T$-shaped if and only if it is empty. Consequently, $t(\IR^1)=1$.
\smallskip

2. Since $T_1=\{0\}\subset\IR^1$, a subset $C\subset\IR^2$ is $T$-shaped if and only if $C$ lies in an affine line. Consequently, $t(\IR^2)=3$.
\smallskip

3. By Theorem~\ref{sandwich}, the 6-element $\binom{2}{0}$-sandwich $\Xiup^2_0$ is $3$-centerpole in $\IR^3$. Consequently, $c_3(\IR^3)\le 6$. By Proposition~\ref{p1}, $t(\IR^3)\le c_3(\IR^3)\le 6$. To see that $t(\IR^3)\ge 6$, we need to check that a subset $C\subset\IR^3$ of cardinality $|C|\le 5$ is $T$-shaped, which means that after a suitable affine transformation of $\IR^3$, $C$ can be embedded into $\IR\times T_2$. By the definition, $T_2=\IR\times\{0\}\cup \{0\}\times\IR_+$.

Consider the convex hull $\conv(C)$ of $C$ in $\IR^3$. If $C$ lies in an affine plane $H$, then applying to $\IR^3$ a suitable affine transformation, we can assume that $C\subset H=\IR\times\IR\times\{0\}\subset\IR\times T_2$. If $C$ does not lie in a plane, then the convex polyhedron $\conv(C)$ has a supporting plane $H_1$ such that $|H_1\cap C|\ge 3$. So, $C\setminus H_1$ lies in one of the closed half-spaces with respect to the plane $H_1$. Denote this subspace by $H_1^+$. The set $C\setminus H_1$ has cardinality $|C\setminus H_1|\le 2$ and hence it lies in an affine plane $H_2\subset \IR^3$ that meets $H_1$. Find an affine transformation $f:\IR^3\to\IR^3$ such that $f(H_1)=\IR\times\IR\times\{0\}$, $f(H_1^+)=\IR\times\IR\times\IR_+$ and $f(H_2)=\{\IR\}\times\{0\}\times\{\IR\}$. Then 
$$f(C)\subset \IR\times\IR\times\{0\}\cup\IR\times\{0\}\times \IR_+=\IR\times T_2$$and hence $C$ is $T$-shaped. 
\smallskip

4. By Theorem~\ref{sandwich}, the $\binom{3}{1}$-sandwich $\Xiup^3_1$ is $4$-centerpole in $\IZ^4$. Consequently, $$t(\IR^4)\le c_4(\IR^4)\le c_4(\IZ^4)\le|\Xiup^3_1|=2^4-1-\textstyle{\binom{3}{1}}=12.$$
The reverse inequality $t(\IR^4)\ge 12$ will be proved in Lemma~\ref{l11} below.
\smallskip

5. Let $C\subset\IR^n$ be a set consisting of $n^2-n+1=n(n-1)+1$ points in general position. This means that no $(n+1)$-element subset of $C$ lies in a hyperplane. 
Then $C$ can not be covered by less than $n$ hyperplanes and consequently $C$ is not $T$-shaped (because the set $\IR\times T_{n-1}$ lies in the union of $(n-1)$ hyperplanes). Then $t(\IR^n)\le|C|=n^2-n+1$.
\smallskip

6. First we prove the inequality 
\begin{equation}\label{eq7}t(\IR^n)\ge \min\{2t(\IR^{n-1}),t(\IR^{n-1})+n+1\}
\end{equation} for every $n\ge 2$. Take any subset $C\subset\IR^n$ of cardinality\newline $|C|<\min\{2t(\IR^{n-1}),t(\IR^{n-1})+n+1\}$. We need to show that $C$ is $T$-shaped. 

Consider the convex hull $\conv(C)$ of $C$ in $\IR^n$. If $\conv(C)$ lies in some hyperplane, then $C$ is $T$-shaped by the definition. So, we assume that $\conv(C)$ does not lie in a hyperplane and then $\conv(C)$ is a compact convex body in $\IR^n$. Let $H$ be a supporting hyperplane of $\conv(C)$ having maximal possible cardinality of the intersection $C\cap H$. It is clear that $|C\cap H|\ge n$.
\smallskip

Now two cases are possible:
\smallskip

a) The set $C\setminus H$ lies in a hyperplane $H_1$, parallel to $H$. Then $H_1$ is a supporting hyperplane of $\conv(C)$ and then $|C\cap H_1|\le|C\cap H|$ by the choice of $H$. Now we see that $|C\cap H_1|\le\frac12|C|<t(\IR^{n-1})$.

Applying to $\IR^n=\IR^{n-1}\times \IR$ a suitable affine transformation, we can assume that $H=\IR^{n-1}\times \{0\}$ and $C\setminus H\subset \IR^{n-1}\times \IR_+$. Let $\pr:\IR^n\to\IR^{n-1}$ be the coordinate projection. Since $|\pr_n(C\setminus H)|<t(\IR^{n-1})$, the set $C'=\pr_n(C\setminus H)$ is $T$-shaped. This means that there is an affine transformation $f:\IR^{n-1}\to\IR^{n-1}$ such that $f(C')\subset \IR\times T_{n-2}$.  This affine transformation $f$ induces the affine transformation 
$$\Phi:\IR^{n-1}\times\IR\to\IR^{n-1}\times\IR,\;\;\Phi(x,y)=(f(x),y),$$ such that $$\Phi(C)=\Phi(C\cap H)\cup\Phi(C\setminus H)\subset \big(\IR\times\IR^{n-2}\times\{0\}\big)\cup \big(\IR\times T_{n-2}\times \IR_+\big)=\IR\times T_{n-1}.$$The affine transformation $\Phi$ witnesses that the set $C$ is $T$-shaped.
\smallskip

b) The set $C\setminus H$ does not lie in a hyperplane parallel to $H$. Then $C\setminus H$ contains two distinct points $x,y$ such that the vector $\vec{xy}$ is not parallel to $H$. Applying to $\IR^n=\IR^{n-1}\times \IR$ a suitable affine transformation, we can assume that $H=\IR^{n-1}\times \{0\}$, $C\setminus H\subset \IR^{n-1}\times\IR_+$, and under the projection $\pr:\IR^{n-1}\times\IR\to\IR^{n-1}$ the images of the points $x$ and $y$ coincide.
Then the projection $C'=\pr(C\setminus H)$ has cardinality $|C'|\le|C\setminus H|-1<|C|-|C\cap H|-1<t(\IR^{n-1})+n+1-n-1=t(\IR^{n-1})$. Continuing as in the preceding case, we can find an affine transformation $\Phi$, witnessing that $C$ is a $T$-shaped set in $\IR^n$.
\smallskip

This proves the inequality (\ref{eq7}). By analogy we can prove that $t(\IR^n)\ge t(\IR^{n-1})+n$. Since $t(\IR^1)=1$, by induction we can show that $t(\IR^n)\ge \frac12 n(n+1)$. In particular, $t(\IR^{n-1})\ge \frac12n(n-1)\ge n+1$ for all $n\ge 4$. In this case
$$t(\IR^n)\ge \min\{2t(\IR^{n-1}),t(\IR^{n-1})+n+1\}=t(\IR^{n-1})+n+1.$$
\smallskip

7. The lower bound $t(\IR^n)\ge \frac12(n^2+3n-4)$, $n\ge 4$, will be proved by induction. For $n=4$ it is true according to the statement (4). Assuming that it is true for some $n>4$ and applying the lower bound (6), we get
$$t(\IR^{n+1})\ge t(\IR^n)+(n+1)+1\ge \frac12(n^2+3n-4)+n+2=\frac12((n+1)^2+3(n+1)-4).$$ 
\end{proof}

To finish the proof of Theorem~\ref{t5}, it remains to prove the promised:

\begin{lemma}\label{l11} Each subset $C\subset\IR^4$ of cardinality $|C|<12$ is $T$-shaped.
\end{lemma}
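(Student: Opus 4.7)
The plan is to extend the inductive machinery of Theorem~\ref{t5}(6) by a single unit. That argument yields only
$$t(\IR^4)\ge\min\{2t(\IR^3),\,t(\IR^3)+5\}=\min\{12,11\}=11,$$
so the task reduces to showing that an $11$-point subset $C\subset\IR^4$ is always $T$-shaped. By Proposition~\ref{Tshape-char}, this means exhibiting three hyperplanes $H_1,H_2,H_3\subset\IR^4$ in general position covering $C$, with each $H_i$ non-separating for $C\setminus(H_1\cup\cdots\cup H_{i-1})$.

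First I would dispose of the degenerate case $\dim\conv(C)\le 3$, in which $C$ already lies in a hyperplane and is trivially $T$-shaped. Otherwise $\conv(C)$ is a genuine $4$-polytope; I would pick a supporting hyperplane $H_1$ of $\conv(C)$ containing a facet of maximum possible size, so $k:=|C\cap H_1|\ge 4$ since every facet of a $4$-polytope has at least four vertices. The case analysis on $k$ mirrors the dichotomy (a)/(b) of Theorem~\ref{t5}(6):
\begin{itemize}
\item If $k\ge 6$, then $|C\setminus H_1|\le 5<6=t(\IR^3)$, and the parallel/projection reduction to $\IR^3$ finishes immediately by Theorem~\ref{t5}(3).
\item If $k=5$, then $|C\setminus H_1|=6$; case~(a) is ruled out by the maximality of $k$, and case~(b) gives $|\pr(C\setminus H_1)|\le 5<6$.
\end{itemize}

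The genuinely critical case is $k=4$, where $|C\setminus H_1|=7$ and the naive projection gives only $|\pr(C\setminus H_1)|\le 6$, one point short. To handle it I would abandon the one-step reduction and build the triple $(H_1,H_2,H_3)$ directly: choose a supporting hyperplane $H_2$ of $\conv(C\setminus H_1)$ containing a facet not parallel to $H_1$ (possible since a bounded $4$-polytope has at most one facet parallel to any given hyperplane, while $\conv(C\setminus H_1)$ has at least five facets in the full-dimensional subcase; if $C\setminus H_1$ itself lies in a hyperplane, take $H_2$ to be that hyperplane). Since a facet has $\ge 4$ vertices, $|C\cap H_2\setminus H_1|\ge 4$, whence the remaining set $C\setminus(H_1\cup H_2)$ has at most $3$ points and lies in some $2$-plane $\pi\subset\IR^4$; any hyperplane $H_3\supset\pi$ then covers it and trivially does not separate it.

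The main obstacle is ensuring the general-position condition on the triple. The hyperplanes containing $\pi$ form a one-parameter family whose normals sweep a $2$-dimensional subspace $W\subset\IR^4$ (the annihilator of the direction of $\pi$), and this family admits a normal linearly independent from those of $H_1,H_2$ unless $W$ coincides with the $2$-dimensional span $V$ of the normals of $H_1,H_2$---that is, unless $\pi$ is parallel to the affine $2$-plane $H_1\cap H_2$. The sandwich $\Xiup^3_1$ from Theorem~\ref{sandwich} shows this obstruction is unavoidable for $|C|=12$, so the real work is to show that at $|C|=11$ it can always be broken by reselecting $H_1$ among the facets of $\conv(C)$, or $H_2$ among the non-parallel facets of $\conv(C\setminus H_1)$. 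I expect this avoidance argument to be the most delicate step, requiring a finite case analysis on the combinatorial type of $\conv(C)$ under the hypothesis that $k=4$ for every supporting hyperplane---which, as one checks, forces $\conv(C)$ to be a simplicial $4$-polytope all of whose non-vertex points of $C$ must lie strictly inside.
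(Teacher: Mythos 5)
Your reduction to the case $|C|=11$, and your treatment of the cases in which some support hyperplane meets $C$ in at least five points, are correct and coincide with the opening moves of the paper's proof: Claims~\ref{s1}--\ref{s3} there show, contrapositively, that a non-$T$-shaped $11$-point set must have \emph{every} support hyperplane meeting it in at most four points. You have also correctly located the one remaining case and its precise obstruction: with $|C\cap H_1|=4$ and $|C\setminus(H_1\cup H_2)|=3$, the third hyperplane can be chosen in general position unless those three points span a plane parallel to $H_1\cap H_2$ --- and this parallelism is in fact forced whenever $C$ is not $T$-shaped (the paper's Claim~\ref{s6}).

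The difficulty is that at exactly this point your argument stops. You state that you \emph{expect} the obstruction to be breakable at $|C|=11$ by reselecting $H_1$ or $H_2$, via a finite case analysis on the combinatorial type of $\conv(C)$, but that analysis is not carried out, and it is the entire substance of the lemma; nothing you have written rules out the bad configuration. The paper's resolution is not a census of polytope types but a structural argument: starting from a support hyperplane $P_1$ with $|P_1\cap C|=4$, it shows that the parallel support hyperplane $P_2$ meets $C$ in exactly one point $p_2$ (Claim~\ref{s7}), that the remaining six points lie in a third hyperplane $P_3$ parallel to $P_1$ (Claim~\ref{s7a}), and that they form an octahedron there with some center $c$ (Claim~\ref{s8}). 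Running the same analysis along a second, suitably chosen direction produces a second octahedron which is forced to share the center $c$, and the central symmetry then places the point $2c-p_2$ simultaneously in a hyperplane $H_1$ and in a hyperplane disjoint from $H_1$ --- the desired contradiction. This is a genuinely nontrivial geometric argument occupying several pages; until you supply it (or a completed version of your proposed case analysis), the $k=4$ case, and hence the lemma, remains unproved.
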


\begin{proof} Assume that some subset $C\subset\IR^4$ of cardinality $|C|<12$ is not $T$-shaped. Without loss of generality, $|C|=11$. 
 
We recall that a hyperplane $H\subset\IR^4$ is called a {\em support hyperplane} for $C$ if $C\cap H\ne \emptyset$ and $H$ does not separate $C$ (which means that $C$ lies in a closed half-space $H^+$ bounded by the hyperplane).

\begin{claimm}\label{s1} Each support hyperplane $H\subset\IR^4$ for $C$ has at most 5 common points with $C$.
\end{claimm}

\begin{proof} Assume that $H$ is a support hyperplane for $C$ with $|H\cap C|>5$. After a suitable affine transformation of $\IR^4$, we can assume that 
$H=\IR^3\times\{0\}$ and $C\subset \IR^3\times\IR_+$. Let $\pr:\IR^4\to\IR^3$ be the coordinate projection.
Since $|C\setminus H|=|C|-|C\cap H|<11-5=6$ and $t(\IR^3)=6$ (by Theorem~\ref{t5}(3)), $\pr(C\setminus H)$ is $T$-shaped in $H$ and so $C$ is $T$-shaped $\IR^4$.  
\end{proof}

\begin{claimm}\label{s2} For any two parallel hyperplanes $H_1$ and $H_2$ in $\IR^4$ the set $C\setminus (H_1\cup H_2)$ is non-empty. 
\end{claimm}

\begin{proof}
Otherwise one of these hyperplanes contains more than 6 points, which contradicts Claim~\ref{s1}.
\end{proof}

\begin{claimm}\label{s3} Each support hyperplane $H$ for the set $C$ has less than 5 common points with $C$.
\end{claimm}

\begin{proof}
Previous claim guarantees the existence of two distinct points $a,b\in C$ that lie in an affine line $L$ that meets $H$. After a suitable affine transformation of $\IR^4$, we can assume that $H=\IR^{3}\times \{0\}$, $C\subset \IR^{3}\times\IR_+$, and $L=\{0\}^3\times\IR$. Let $\pr:\IR^4\to\IR^3$ be the coordinate projection. 
Assuming that $|H\cap C|\ge 5$ and taking into account that $\pr(a)=\pr(b)$, we conclude that $$|\pr(C\setminus H)|\le |C\setminus H|-1=|C|-|C\cap H|-1\le 5<6=t(\IR^3).$$ It follows that $\pr(C\setminus H)$ is $T$-shaped in $\IR^3$ and then $C$ is $T$-shaped in $\IR^4$.
\end{proof}

The characterization of $T$-shaped sets given in Proposition~\ref{Tshape-char} implies:

\begin{claimm}\label{s6} If $H_1$ is a support hyperplane for $C$, $H_2$ is a support hyperplane for $C\setminus H_1$ and $H_1,H_2$ are not parallel, then  $|C\setminus (H_1\cup H_2)|\ge 3$ and if $|C\setminus (H_1\cup H_2)|=3$, then the set $C\setminus (H_1\cup H_2)$ does not lie in a line but lies in a plane, parallel to $H_1\cap H_2$.
\end{claimm}

\begin{claimm}\label{s7}If $H_1$ and $P_2$ are parallel support hyperplanes for $C$ and $|H_1\cap C|=4$, then  $|P_2\cap C|=1$.
\end{claimm}

\begin{proof} By Claim~\ref{s3}, $C\setminus H_1$ does not lie in a hyperplane. Now consider 4 cases.
\smallskip

1) $|P_2\cap C|>4$. In this case $C$ is $T$-shaped by Claim~\ref{s3}.
\smallskip

2) $|P_2\cap C|=4$. We claim that the set $P_2\cap C$ does not lie in a plane $P$.
Otherwise $P$ can be enlarged to a support hyperplane that contains $\ge 5$ points of $C$, which is forbidden by Claim~\ref{s3}. Therefore, the convex hull of $P_2\cap C$ is a convex body in $P_2$ and we can find a support hyperplane $H_2$ for $C\setminus H_1$ that meets $H_1$, has at least 4 common points with $C\setminus H_1$ and exactly three common points with the set $C\cap P_2$. In this case the unique point $c_2$ of the set $C\cap P_2\setminus H_2$ lies in $C\setminus (H_1\cup H_2)$. By Proposition~\ref{Tshape-char}, the set $C\setminus (H_1\cup H_2)$ contains exactly 3 points that lie in a plane parallel to $H_1\cap H_2$. Since this set contains the point $c_2\in C\cap P_2$, we conclude that $C\setminus (H_1\cup H_2)\subset P_2$ and hence $|C\cap P_2|=6$, which is a contradiction.  
\smallskip

3) $|P_2\cap C|=3$. Let $Pl$ be a plane which contains $P_2\cap C$ and lies in the hyperplane $P_2$. We claim that the set $C\setminus(H_1\cup Pl)$ lies in a plane $Pl_1$ that is parallel to $Pl$. Let $S$ be the set of all points $x\in C\setminus (H_1\cup Pl)$ that belong to a support hyperplane $H_x$ to $C\setminus H_1$ that has at least 4 common points with $C\setminus H_1$ and contains the plane $Pl$. Claim~\ref{s6} guarantees that the set $C\setminus (H_1\cup H_x)$ contains exactly 3 elements and lies in a plane that is parallel to the intersection $H_1\cap H_x$ (which is parallel to $Pl$). Since the set $C\setminus H_1$ does not lie in a hyperplane, the set $S$ contains more that one point, which implies that the set $C\setminus (H_1\cup Pl)=\bigcup_{x\in S}C\setminus(H_1\cup H_x)$ lies in a plane $Pl_1$ that is parallel to the plane $Pl$. Let $H_2$ be the hyperplane that contains the parallel planes $Pl$ and $Pl_1$. Since $H_2$ meets $H_1$, we see that $C\subset H_1\cup H_2$ is $T$-shaped by Proposition~\ref{Tshape-char} and this is a contradiction.
\smallskip

4) $|P_2\cap C|=2$. Since $C\setminus H_1$ does not lie in a hyperplane, there is a support hyperplane $H_2$ to $C\setminus H_1$ such that $|H_2\cap (C\setminus H_1)|\ge 4$ and $|H_2\cap P_2\cap C|=1$. It follows that the hyperplane $H_2$ does not coincide with $P_2$ and hence meets the hyperplane $H_1$. By Claim~\ref{s6}, the complement $C\setminus (H_1\cup H_2)$ contains exactly 3 points that lie in a plane, parallel to $H_1\cap H_2$. Since $C\setminus(H_1\cup H_2)$ meets the hyperplane $P_2$ we conclude that $C\setminus(H_1\cup H_2)\subset P_2$ and $|C\cap P_2|\ge 4$, which is a contradiction. 
\end{proof}

\begin{claimm}\label{s7a} If $P_1$ and $P_2$ are parallel support hyperplanes for $C$ and $|P_1\cap C|=4$, then the set $C\setminus(P_1\cup P_2)$ lies in a hyperplane $P_3$ that is parallel to $P_1$ and $P_2$.
\end{claimm}

\begin{proof} By Claim~\ref{s7}, $|P_2\cap C|=1$ and hence $|C\setminus(P_1\cup P_2)|=6$. Let $x$ be the unique point of $P_2\cap C$. Take any support hyperplane $H\ni x$ for the set $C\setminus P_1$ such that $|H\cap C|\ge 4$. Since $H$ meets $P_1$, Proposition~\ref{Tshape-char} guarantees that the set $C'=C\setminus (P_1\cup H)$ contains exactly 3-points that lie in a plane parallel to the intersection $P_1\cap H$ and hence parallel to $P_1$. The hyperplane $H'$ containing the set $C'\cup\{x\}$ is a support hyperplane for the set $C\setminus P_1$. Applying Proposition~\ref{Tshape-char}, we conclude that the set $C''=C\setminus (P_1\cup H')=C\cap H\setminus P_2$ contains exactly 3 points lying in a plane parallel to $P_1\cap H'$.  Thus $C\setminus (P_1\cup P_2)$ lies in two planes parallel to $P_1$ and hence it lies in a hyperplane $P_3$. Proposition~\ref{Tshape-char} implies that the hyperplane $P_3$ is parallel to $P_1$.
\end{proof}

By an {\em octahedron} in a linear space $L$ we understand a set of the form $$c+\{\bold e_i,-\bold e_i:1\le i\le 3\}$$where  $\bold e_1,\bold e_2,\bold e_3$ are linearly independent vectors in $L$ and $c\in L$ is the {\em center} of the octahedron. Up to an affine equivalence an octahedron is a unique 6-element set $X$with 3-dimensional affine hull $A$ such that for each support plane $P\subset A$ of $X$ with $|P\cap X|\ge 3$ the set $X\setminus P$ contains 3 points and lies in a plane $P'$, parallel to $P$.

\begin{claimm}\label{s8} If $P_1$ and $P_2$ are parallel support hyperplanes for $X$ and $|P_1\cap C|=4$, then the set $C\setminus(P_1\cup P_2)$ is an octahedron that lies in a hyperplane $P_3$, parallel to $P_1$.
\end{claimm}

\begin{proof} By the preceding Claim, the set $K=C\setminus (P_1\cup P_2)$ lies in a hyperplane $P_3$, parallel to $P_1$. Let us show that $K$ does not lie in a plane. In the opposite case, we could find a hyperplane $H_2$ that contains the set $K$ and meets the hyperplane $P_1$. Then for each hyperplane $H_3$ that contains the unique point $C\cap P_2$ and has one-dimensional intersection with $P_1\cap H_2$, we get $C\subset P_1\cup H_2\cup H_3$ witnessing that $C$ is $T$-shaped.

Thus the affine hull of $K$ is 3-dimensional. To see that $K$ is an octahedron, it suffices to check that for each support plane $P\subset P_3$ of $K$ with $|P\cap K|\ge 3$ the set $K\setminus P$ contains exactly 3 points and lies in a plane parallel to $P$.

Let $x$ be the unique point of the set $C\cap P_2$ and $H_2$ be the hyperplane containing the plane $P$ and passing through $x$. It follows that $H_2$ is a support hyperplane for the set $C\setminus P_1$. By Claim~\ref{s6}, the set $C\setminus (P_1\cup H_2)=K\setminus P$ contains exactly 3 elements and lies in a plane $P'$ parallel to the intersection $H_1\cap H_2$. 

Now let $H_2'$ be the hyperplane that contains the support plane $P'$ and passes through the point $x$. Since $P'$ is a support plane for $K$ in the hyperplane $P_3$, $H_3$ is a support hyperplane for $K\cup\{x\}=C\setminus P_1$ in $\IR^4$. 
Since $H_3'$ intersects $P_1$, Claim~\ref{s6} guarantees that the set $C\setminus(P_1\cup H_2')=K\setminus P'$ contains exactly 3 points and the plane $P$ containing these 3 points is parallel to $P_1\cap H_2'$ which is parallel to the plane $P'$.
\end{proof} 

After this preparatory work we are ready to finish the proof of Lemma~\ref{l11}.
As $C$ is not $T$-shaped, it does not lie in a hyperplane. So, we can find a support hyperplane $P_1$ for $C$ such that $|P_1\cap C|\ge 4$. Let $P_2$ be a support hyperplane for $C$, which is parallel to $P_1$. By Claim~\ref{s7}, $|P_1\cap C|=4$ and $|P_2\cap C|=1$. Let $p_2$ be the unique point of the set $P_2\cap C$. By Claim~\ref{s8}, $C\setminus(P_1\cup P_2)$ is an octahedron that lies in a hyperplane $P_3$, parallel to the hyperplanes $P_1$ and $P_2$. Let $c$ be the center of this octahedron and $2c-p_2$ be the point, symmetric to $p_2$ with respect to $c$.

Fix any 
3-element subset $F$ of $P_1\cap C$  such that $2c-p_2\in F$ if $2c-p_2\in C\cap P_1$. Next, find a hyperplane $H_1$ for $C$ that contains $F$ and meets $C\setminus H_1$ at some point $a$. 
If $a=p_2$, then the set $C\subset H_1\cup P_3\cup (C\cap P_1\setminus F)$ is $T$-shaped by Proposition~\ref{Tshape-char}.

Consequently, $a$ is a point of the octahedron $C\cap P_3$ with center $c$. Let $H_2$ be a support hyperplane for $C$ that is parallel to the hyperplane $H_1$. By Claims~\ref{s7} and \ref{s8}, $|C\cap H_1|=4$, $|C\cap H_2|=1$ and $C\setminus(H_1\cup H_2)$ is an octahedron that lies in a hyperplane $H_3$, parallel to $H_1$ and $H_2$. 
If $H_3$ does not meet the octahedron $C\cap P_3$, then $(C\cap P_3)\cap (C\cap H_3)=(C\cap P_3)\setminus H_1=C\cap P_3\setminus\{a\}$. In this case the octahedra $C\cap P_3$ and $C\cap H_3$ have 5 common points and hence lie in the same hyperplane $P_3=H_3$, which is not possible. So, the support hyperplane $H_3$ meets the octahedron $C\cap P_3$ at a single point and this point is $2c-a$. In this case the octahedra $C\cap P_3$ and $C\cap H_3$ have 4 common points which belong to the set $C\cap P_3\setminus\{a,2c-a\}$ and lie in the 2-dimensional plane $P_3\cap H_3$. This implies that the octahedra $C\cap P_3$ and $C\cap H_3$ have the common center $c$. Since $p_2\in C\cap H_3$, the point $2c-p_2$ belongs to the octahedron $C\cap H_3\subset C$. It follows from $p_2\in P_2$ and $c\in P_3$ that $2c-p_2\in C\setminus(P_2\cup P_3)=C\cap P_1$ and hence $2c-p_2\subset F\subset H_1$ by the choice of the set $F$. On the other hand, $2c-p_2$ belongs to the hyperplane $H_3$, which is disjoint with $H_1$ and this is a desired contradiction.
\end{proof}

\section{Enlarging non-centerpole sets}\label{s:enlarge}

In this section we prove several lemmas on enlarging non-centerpole subsets. Namely, we show that under certain conditions, a non-$k$-centerpole subset  $C$ of a topological group $X$ (possibly enlarged by one or two points) remains not $k$-centerpole in the direct sum $X\oplus\IR$. The group $X\oplus\IR$ can be identified with the direct product $X\times\IR$ so that $X$ is identified with the subgroup $X\times\{0\}\subset X\times\IR$ while the real line $\IR$ is identified with the subgroup $\{e\}\times\IR\subset X\times\IR$ where $e$ is the neutral element of the group $X$.

\begin{lemma}\label{l:+0} If for $k\ge 2$ a subset $C\subset X$ of a topological group $X$ is not $k$-centerpole (for Borel colorings), then set $C$ is not $k$-centerpole in $X\oplus\IR$.
\end{lemma}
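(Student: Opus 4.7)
The strategy is to extend a witness coloring on $X$ to a witness coloring on $X\oplus\IR$ by using the sign of the $\IR$-coordinate to separate two distinguished colors, leaving $X\times\{0\}$ colored by the original witness.

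First I would fix a (Borel) $k$-coloring $\chi:X\to k$ certifying that $C$ is not $k$-centerpole in $X$; equivalently, the set $\chi^{-1}(i)\cap(2c-\chi^{-1}(i))$ is totally bounded in $X$ for every $c\in C$ and every color $i\in k$. Using that $k\ge 2$, pick two distinct colors $0,1\in k$ and define
$$
\chi':X\times\IR\to k,\qquad \chi'(x,t)=\begin{cases}\chi(x),& t=0,\\ 0,& t>0,\\ 1,& t<0.\end{cases}
$$
Borelness of $\chi'$ is immediate from Borelness of $\chi$, since each preimage $(\chi')^{-1}(i)$ is a Borel combination of $\chi^{-1}(i)\times\{0\}$ with the half-spaces $X\times(0,\infty)$ and $X\times(-\infty,0)$.

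The main verification is that no unbounded monochromatic subset of $X\times\IR$ is symmetric with respect to any point $(c,0)\in C\times\{0\}$. The key observation is that the central symmetry $(x,t)\mapsto(2c-x,-t)$ through $(c,0)$ interchanges the two open half-spaces $X\times(0,\infty)$ and $X\times(-\infty,0)$ and preserves the hyperplane $X\times\{0\}$ setwise. Hence a $(c,0)$-symmetric monochromatic set $S$ cannot meet either open half-space, as otherwise it would contain both a point of color $0$ and a point of color $1$. Consequently $S\subset X\times\{0\}$, and the projection $\pi_X(S)$ is a $c$-symmetric monochromatic subset of $X$ under $\chi$, hence totally bounded by the choice of $\chi$. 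Since $\pi_\IR(S)\subset\{0\}$ is trivially totally bounded, the set $S$ is totally bounded in $X\times\IR$.

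I do not foresee any substantive obstacle: the whole argument reduces to the elementary fact that any central symmetry whose center lies in $X\times\{0\}$ exchanges the two open half-spaces of $X\times\IR$, combined with the standard fact that total boundedness in a direct product of topological groups is equivalent to total boundedness of both coordinate projections.
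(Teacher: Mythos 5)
Your proposal is correct and follows essentially the same route as the paper: extend the witness coloring $\chi$ by coloring the two open half-spaces $X\times(0,\infty)$ and $X\times(-\infty,0)$ with two distinct fixed colors, so that any symmetric set centered at a point of $C\times\{0\}$ is forced into the hyperplane $X\times\{0\}$. Your verification paragraph merely spells out the details the paper leaves to the reader.
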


\begin{proof}
Since the set $C\subset X$ is not $k$-centerpole (for Borel colorings), there exists a (Borel) coloring $\chi:X\to k$ such that $X$ contains no monochromatic unbounded subset, which is symmetric with respect to a point $c\in C$. Extend $\chi$ to a (Borel) coloring $\tilde \chi:X\times\IR\to k$ letting 
$$\tilde\chi(x,t)=\begin{cases}
\chi(x)&\mbox{if $t=0$}\\
0&\mbox{if $t<0$}\\
1&\mbox{if $t>0$}.
\end{cases} 
$$This coloring witnesses that $C$ is not $k$-centerpole in $X\oplus\IR$ (for Borel colorings).
\end{proof}

\begin{lemma}\label{l:+1} If for $k\ge 3$ a subset $C\subset X$ of a topological group $X$ with $c_2^B(X)\ge 2$ is not $k$-centerpole (for Borel colorings), then for each $x\in X\times(0,\infty)$ the set $C\cup\{x\}$ is not $k$-centerpole for (Borel) colorings of the topological group $X\oplus\IR$.
\end{lemma}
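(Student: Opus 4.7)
The plan is to construct an explicit $($Borel$)$ $k$-coloring of $X\oplus\IR$ witnessing that $C\cup\{x\}$ is not $k$-centerpole, by grafting a carefully chosen ``separator layer'' at height $2t_0$ onto the sign-splitting construction of Lemma~\ref{l:+0}. Write $x=(x_0,t_0)$ with $t_0>0$. From the hypotheses I take a (Borel) witness $\chi_C:X\to k$ for $C$ and, using that $c_2^B(X)\ge 2$ means no singleton is $2$-centerpole in $X$, a (Borel) $2$-coloring $\chi_1:X\to\{0,1\}$ with no unbounded $\chi_1$-monochromatic subset of $X$ symmetric about $x_0$. The essential new ingredient is a Borel map $\psi:X\to\{1,\ldots,k-1\}$ satisfying $\psi(y)\ne\chi_C(2x_0-y)$ for every $y\in X$; a concrete choice is $\psi(y)=\chi_C(2x_0-y)+1$ when $\chi_C(2x_0-y)\le k-2$ and $\psi(y)=1$ otherwise, where both the availability of a nonzero target range and the avoidance of $\chi_C(2x_0-y)$ use $k\ge 3$. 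The extended coloring is then
$$\tilde\chi(y,t)=\begin{cases} \chi_C(y) & t=0,\\ 0 & t<0,\\ \chi_1(y)+1 & t>0,\; t\ne 2t_0,\\ \psi(y) & t=2t_0.\end{cases}$$

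For a center $(c,0)$ with $c\in C$ the verification runs parallel to Lemma~\ref{l:+0}: the color at $t<0$ is $0$ while the colors at $t>0$ all lie in $\{1,\ldots,k-1\}$, so the reflection $(y,t)\mapsto(2c-y,-t)$ admits no monochromatic orbit crossing $t=0$; any monochromatic $(c,0)$-symmetric set $S$ is squeezed onto $X\times\{0\}$, where it is $\chi_C$-monochromatic and symmetric about $c$, contradicting the choice of $\chi_C$.

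The main obstacle is handling the new center $x$. Under the reflection $(y,t)\mapsto(2x_0-y,2t_0-t)$ the slices $t=0$ and $t=2t_0$ are paired, and without the separator layer $S$ could have unbounded pieces on both. The inequality $\psi(y)\ne\chi_C(2x_0-y)$ is engineered precisely to sever this pairing: if $(y,0)$ and $(2x_0-y,2t_0)$ both lay in $S$ of color $i$, then $\chi_C(y)=i=\psi(2x_0-y)$ would violate the defining property of $\psi$, and the same argument dually eliminates any $(y,2t_0)\in S$. Points with $t\notin[0,2t_0]$ are ruled out because either their own color or the color of their mirror fails to be $i$, and what remains is $S\subset\chi_1^{-1}(i-1)\times(0,2t_0)$ when $i\in\{1,2\}$ and $S=\emptyset$ otherwise. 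Since the height range is bounded, unboundedness of $S$ forces its projection to $X$ to be unbounded; but this projection is symmetric about $x_0$ and $\chi_1$-monochromatic, contradicting the choice of $\chi_1$. The whole argument reduces to engineering the $t=2t_0$ layer to simultaneously avoid color $0$ (to preserve the half-space separation for centers in $C$) and be pointwise different from $\chi_C\circ(2x_0-\,\cdot\,)$ (to decouple the critical slices for the center $x$), which is exactly the slack provided by $k\ge 3$ together with the auxiliary coloring $\chi_1$ supplied by $c_2^B(X)\ge 2$.
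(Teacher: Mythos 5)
Your construction is correct and is essentially the paper's own argument: a sign-splitting filler to handle centers in $C$, the auxiliary coloring $\chi_1$ supplied by $c_2^B(X)\ge 2$ to kill symmetric sets about the new center's $X$-coordinate, and a decoupling layer at height $2t_0$ chosen pointwise different from $\chi_C$ reflected through $x$ (the paper's $\chi_2$). The only differences are cosmetic — the paper normalizes $x=(e,1)$ and uses constant fillers off the three critical slices, whereas you spread $\chi_1+1$ over the whole strip $(0,2t_0)$ and then project — and both verifications go through.
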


\begin{proof} 
Without loss of generality we may assume that $x=(e,1)$ where $e$ is the neutral element of topological group $X$. Fix a (Borel) coloring $\chi:X\to k$ witnessing that the subset $C\subset X$ is not $k$-centerpole (for Borel colorings). 

This coloring induces a (Borel) 2-coloring $\chi_2:X\to 2$ defined by
$$\chi_2(x)=\min\big(\{0,1\}\setminus\chi(x^{-1})\big)\mbox{ \ for $x\in X$}.$$
 
Since $c_2^B(X)\ge 2$,  there exists a Borel coloring $\chi_1: X\to 2$ witnessing that the singleton $\{e\}$ is not 2-centerpole for Borel colorings of $X$.

It is easy to see that the (Borel) coloring $\tilde \chi:X\times\IR\to k$ defined by
$$ \tilde \chi(x,t)=\begin{cases}
\chi(x),&\mbox{if $t=0$},\\
\chi_1(x),&\mbox{if $t=1$},\\
\chi_2(x),&\mbox{if $t=2$},\\
0,&\mbox{if $1<t\neq 2$},\\
1,&\mbox{if $0<t<1$},\\
2&\mbox{if $t<0$}
\end{cases}
$$witnesses that the set $C\cup\{(e,1)\}$ fails to be $k$-centerpole for (Borel) colorings of the topological group $X\oplus\IR$.
\end{proof}

\begin{lemma}\label{l:c3=6} $c_3^B(\IR^m)\ge 6$ for all $m\ge 3$.
\end{lemma}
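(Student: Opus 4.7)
My plan is to exhibit, for every $m\ge 3$, a five-element subset of $\IR^m$ that fails to be $3$-centerpole for Borel colorings; from this the definition of $c_3^B$ at once yields $c_3^B(\IR^m)\ge 6$.

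For the base case $m=3$, I would combine two results already proved in Section~\ref{s:Tshape}: Theorem~\ref{t5}(3) gives $t(\IR^3)=6$, and Proposition~\ref{p1} gives $t(\IR^k)\le c_k^B(\IR^k)$ for finite $k$. Together these imply $c_3^B(\IR^3)\ge 6$, so by definition every five-element subset $C\subset\IR^3$ fails to be $3$-centerpole for Borel colorings of $\IR^3$. I would fix any such $C$.

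For the inductive step to $m\ge 4$, I would view $\IR^m$ as $\IR^3$ with $m-3$ copies of $\IR$ successively attached and apply the Borel version of Lemma~\ref{l:+0} exactly $m-3$ times with $k=3$; the hypothesis $k\ge 2$ is trivially met. Each application extends the witnessing Borel coloring from the ambient group $X$ to $X\oplus\IR$ and preserves the failure of $C$ to be $3$-centerpole. After $m-3$ iterations, the same set $C\subset\IR^3\subset\IR^m$ is not $3$-centerpole for Borel colorings of $\IR^m$, so $c_3^B(\IR^m)>|C|=5$, i.e.\ $c_3^B(\IR^m)\ge 6$.

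There is no serious obstacle in this lemma itself; it is a short assembly of previously established tools. The genuine content lies earlier in the paper, namely in the equality $t(\IR^3)=6$ (whose upper half rests on Theorem~\ref{sandwich} applied to $\Xiup^2_0$) and in the dimension-lifting mechanism of Lemma~\ref{l:+0}; once both are in hand, Lemma~\ref{l:c3=6} follows by induction on $m$ without any further geometric input.
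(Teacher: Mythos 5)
Your argument has a genuine gap, and it starts in the first sentence: to prove $c_3^B(\IR^m)\ge 6$ you must show that \emph{every} subset of $\IR^m$ of cardinality at most $5$ fails to be $3$-centerpole for Borel colorings, since $c_3^B$ is defined as the \emph{minimal} cardinality of a centerpole set. Exhibiting one particular five-element set that fails proves nothing about the minimum. Your construction fixes a single $C\subset\IR^3$ and pushes it up through $\IR^3\subset\IR^4\subset\dots\subset\IR^m$ via Lemma~\ref{l:+0}, which only shows that this one set (and, at best, any five-element set lying in a $3$-dimensional affine subspace) is not $3$-centerpole in $\IR^m$.

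The case that your route cannot reach is a five-element subset of $\IR^4$ that affinely spans $\IR^4$: such a set does not lie in any copy of $\IR^3$, so Lemma~\ref{l:+0} is inapplicable to it. This is exactly why the paper treats $m=4$ separately: given $|C|\le 5$ in $\IR^4$, one chooses a hyperplane $H_3$ containing at least four of the points, so that $|C\setminus H_3|\le 1$, notes that $|C\cap H_3|<6=c_3^B(\IR^3)$ makes $C\cap H_3$ non-centerpole in $H_3$, and then invokes Lemma~\ref{l:+1} (not Lemma~\ref{l:+0}), whose hypothesis $c_2^B(H_3)\ge 2$ is supplied by $c_2^B(\IR^3)=3$, to absorb the one point lying off $H_3$. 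Once $c_3^B(\IR^4)\ge 6$ is secured, the induction for $m\ge 5$ does proceed essentially as you describe, because any set of at most five points lies in an $(m-1)$-dimensional affine hyperplane when $m\ge 5$, so Lemma~\ref{l:+0} applies there. Your base case via $t(\IR^3)=6$ and Proposition~\ref{p1} is correct; the missing ingredient is the one-point-enlargement lemma needed to cross from dimension $3$ to dimension $4$.
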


\begin{proof} By Theorem~\ref{t5}(3) and Proposition~\ref{p1}, $c_3^B(\IR^3)\ge t(\IR^3)=6$.

Next, we check that $c_3^B(\IR^4)\ge 6$. Assuming that $c_3^B(\IR^4)<6$ find a subset $C\subset \IR^4$ of cardinality $|C|\le 5$, which is 3-centerpole for Borel colorings of $\IR^4$.

Since $|C|\le 5$, there is a 3-dimensional hyperplane $H_3\subset\IR^4$ such that $|C\setminus H_3|\le 1$. Since $|C\cap H_3|\le|C|<6=c_3^B(\IR^3)$, the set $C\cap H_3$ is not 3-centerpole for Borel colorings of $H_3$. By (the proof of) Proposition 4.1 of \cite{BDR}, $c_2^B(\IR^3)=3\ge 2$. By  Lemma~\ref{l:+1}, the set $C$ is not 3-centerpole for Borel colorings of $H_3\oplus \IR$ (which can be identified with $\IR^4$).

Now assume that the inequality $c_3^B(\IR^{m-1})\ge 6$ has been proved for some $m\ge 4$. Assuming that $c_3^B(\IR^m)\le 5$ find a subset $C\subset \IR^{m}$ of cardinality $|C|\le 5$ which is 3-centerpole for Borel colorings of $\IR^{m}$. This set lies in a $m-1$ dimensional hyperplane and according to Lemma~\ref{l:+0}, is 3-centerpole for Borel colorings of $\IR^{m-1}$. Then   $c_3^B(\IR^{m-1})\le|C|\le 5$, which contradicts the inductive assumption.
\end{proof}

\begin{lemma}\label{l:+2} If for $k\ge 4$ a subset $C\subset X$ of a topological group $X$ with $c_2^B(X)\ge 3$ is not $k$-centerpole (for Borel colorings), then for any 2-element set $A\subset X\times(0,\infty)$ the set $C\cup A$ is not $k$-centerpole for (Borel) colorings of the topological group $X\oplus\IR$.
\end{lemma}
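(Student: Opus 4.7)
The plan is to adapt and expand the layered coloring construction of Lemma~\ref{l:+1}, introducing additional horizontal slabs in $X\times\IR$ and richer auxiliary colorings to cope with the second extra center, with the extra color budget $k\ge 4$ and the stronger blocking hypothesis $c_2^B(X)\ge 3$ providing the flexibility needed.

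Write $A=\{a_1,a_2\}$ with $a_i=(x_i,t_i)$ and $t_i>0$. Fix a (Borel) $k$-coloring $\chi:X\to k$ witnessing that $C$ is not $k$-centerpole in $X$. Using $c_2^B(X)\ge 3$, pick a Borel $2$-coloring $\chi_{12}:X\to\{0,1\}$ witnessing that $\{x_1,x_2\}$ is not $2$-centerpole, together with individual $2$-colorings $\chi_i:X\to\{0,1\}$ blocking each singleton $\{x_i\}$. As in Lemma~\ref{l:+1}, define the opposite $2$-colorings $\chi^*_i(v):=\min(\{0,1\}\setminus\{\chi(2x_i-v)\})$, which satisfy $\chi^*_i(v)\ne\chi(2x_i-v)$ whenever $\chi(2x_i-v)\in\{0,1\}$. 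Partition $\IR$ by the at most five special heights $\{0,t_1,t_2,2t_1,2t_2\}$ and the open intervals they determine, and define an extension $\tilde\chi:X\times\IR\to k$ by placing $\chi$ at height $0$, the coloring $\chi_i$ (or $\chi_{12}$ when $t_1=t_2$) at height $t_i$, the opposite coloring $\chi^*_i$ at height $2t_i$, and a constant background color on each open zone, chosen from $\{0,1,2,3\}$.

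Verification proceeds by a case analysis on the symmetry center. For $c\in C$ the heights of a symmetric pair sum to $0$: pairs at $(0,0)$ are killed by $\chi$, and the backgrounds are chosen so that the color on the zone $(-\infty,0)$ differs from every positive-height background and from the ranges of the special slices, killing every cross-height pair. For $c=a_i$ the heights of a symmetric pair sum to $2t_i$: the pair $(t_i,t_i)$ is killed by $\chi_i$ or $\chi_{12}$, the pair $(0,2t_i)$ is killed by the defining relation $\chi^*_i(v)\ne\chi(2x_i-v)$, and all other cross-zone pairs are killed by pairwise distinct background colors, which are available because $k\ge 4$ gives four colors to allocate across the open zones.

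The main technical obstacle is the handling of coincidences among the special heights, in particular $t_1=t_2$ and $2t_1=t_2$, because then a single shared slice has to simultaneously block a center $x_j$ and carry an opposite-of-$\chi$ relation for the other center $x_i$. My plan for these cases is to promote the shared slice to a $3$-coloring built from a $2$-coloring $\psi:X\to\{0,1\}$ that blocks $x_j$, engineered so that each of its three level sets is contained in a level set of $\psi$ (which transports the blocking property from $\psi$ to the new slice coloring) while the pointwise opposite-of-$\chi$ constraint is satisfied by case-splitting on the value of $\chi(2x_i-v)$. The fourth color then remains available as the background on the zone $(-\infty,0)$, disjoint from the range of the shared slice, so that the corresponding cross-pair for centers in $C$ is still killed. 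It is exactly this delicate interplay between the enlarged slice coloring, the blocking power of $\psi$ guaranteed by $c_2^B(X)\ge 3$, and the pointwise opposite-of-$\chi$ constraint that makes the hypotheses $k\ge 4$ and $c_2^B(X)\ge 3$ essential.
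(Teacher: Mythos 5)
Your architecture (slice colorings at special heights, constant backgrounds on the open zones, an antisymmetric background pattern, extra colors from $k\ge 4$) is the same as the paper's, but the verification step has a real hole: the list of special heights $\{0,t_1,t_2,2t_1,2t_2\}$ is too short. For the center $(x_1,t_1)$ you must also control the pairs at heights $(t_2,\,2t_1-t_2)$, $(2t_2,\,2t_1-2t_2)$ and, symmetrically, $(t_1,\,2t_2-t_1)$ for the center $(x_2,t_2)$. These are \emph{not} background--background pairs, so they are not ``killed by pairwise distinct background colors'': one member of such a pair lies on a special slice carrying a $2$-valued coloring that blocks a center, and at least one of its two colors occurs on an unbounded subset of $X$ that you cannot control (blocking a center says nothing about which color classes are unbounded). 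If the constant background color of the zone containing the reflected height equals that color, you obtain an unbounded monochromatic set symmetric about $(x_i,t_i)$. Your only provision for special-versus-background pairs concerns the zone $(-\infty,0)$. The paper spends most of its effort exactly here: it normalizes $w-v=1$ and splits into $v=1$, $v=2$, $v\notin\{1,2\}$; in the case $v=2$ it introduces an \emph{additional} special slice at height $2v-w=1$ defined to be opposite to the reflected slice at $w$; in the generic case it gives the slice at $w$ the range $\{1,2\}$ precisely so that it misses the background color $0$ sitting at height $2v-w$, and it builds avoidance of $\psi(2)$ into the slice at $2v$ to neutralize the height $2w-2v=2$. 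None of these devices appears in your plan, and with only four colors they are not automatic.

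Second, your mechanism for the coincidence $2t_1=t_2$ does not work as described. You ask for a $3$-coloring of the shared slice whose level sets refine a blocking $2$-coloring $\psi$ and which is simultaneously pointwise different from $\chi(2x_1-\cdot)$. A $3$-coloring refining a $2$-coloring must assign a single color to one of the two $\psi$-classes, and on that class the pointwise constraint cannot be guaranteed; enlarging to four colors would put the reserved bottom-zone color into the range of a positive-height slice and break the argument for centers in $C\times\{0\}$. The paper's $\chi_2$ satisfies only the weaker condition that it separates the pair $\{x,\,2b-x\}$ exactly when $\phi$ does --- which still blocks $b$, since any $\chi_2$-monochromatic set symmetric about $b$ is contained in the bounded set $\{x:\phi(x)=\phi(2b-x)\}$ --- and it is this weaker condition that can be reconciled with the pointwise opposite-of-$\chi$ requirement inside three colors, at the cost of the five-case formula in the paper's case 2a. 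Your refinement idea would have to be replaced by something of that kind, so as written the plan does not close.
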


\begin{proof} Let $(a,v)$ and $(b,w)$ be the points of the 2-element set $A\subset X\times(0,\infty)$. We can assume that $v\le w$. Let $\chi_0:X\to k$ be a (Borel) coloring witnessing that the set $C$ is not $k$-centerpole for (Borel) colorings of the group $X$.

Consider the Borel 4-coloring 
$\psi:\IR\to 4$ of the real line defined by
$$\psi(t)=\begin{cases}
3&\mbox{if $t\le 0$}\\
0&\mbox{if $0<t\le v$}\\
1&\mbox{if $v<t\le w$}\\
2&\mbox{if $w<t$}
\end{cases}
$$
and observe that for each $c\in\{0,v,w\}$ and $t\in\IR\setminus\{c\}$ we get $\psi(t)\ne\psi(2c-t)$.

We consider 2 cases.
\smallskip

1) $v=w$. In this case we can assume that $v=w=1$. Since $c_2^B(X)\ge 3$, there exists a Borel coloring $\chi_1: X\to 2$ witnessing that the 2-element set $\{a,b\}\subset X$ is not 2-centerpole for Borel colorings of $X$.
The (Borel) coloring $\chi_0$ induces the (Borel) coloring $\chi_2: X\to 3$ defined by the formula 
$$\chi_2(x)=\min \big(\{0,1,2\}\setminus \{\chi_0(a x^{-1}a),
\chi_0(bx^{-1}b)\}\big).$$

Now we see that the (Borel) coloring $\tilde\chi:X\times\IR\to k$ defined by
$$ \tilde \chi(x,t)=\begin{cases}
\chi_t(x),&\mbox{if $t\in\{0,1,2\}$},\\
\psi(t),&\mbox{otherwise}
\end{cases}
$$
witnesses that the set $C\cup A$ is not $k$-centerpole for (Borel) colorings of the topological group $X\oplus\IR$.
\smallskip

2) The second case occurs when $v\ne w$. Without loss of generality, $v<w$ and $w-v=1$. This case has three subcases.
\smallskip

2a) $v=1$ and $w=2$. In this case we can assume that $b=e$ is the neutral element of the group $X$.

Since $c_2^B(X)\ge 3$, there is a Borel 2-coloring $\chi_1:X\to 2$ witnessing that the singleton $\{a\}$ is not 2-centerpole in $X$. By the same reason, there is a Borel 2-coloring $\phi:X\to 2$ witnessing that the singleton $\{b\}=\{e\}$ is not 2-centerpole for Borel colorings of $X$.
Using the colorings $\phi$ and $\chi_0$ one can define a (Borel) 3-coloring $\chi_2:X\to 3$ such that $\chi_2(x)\ne \chi_0(ax^{-1}a)$ for all $x\in X$ and $\chi_2(x)\ne \chi_2(x^{-1})$ if and only if $\phi(x)\ne\phi(x^{-1})$.

Such a coloring $\chi_2:X\to 3$ can be defined by the formula
$$
\chi_2(x)=\begin{cases}
\min\big(3\setminus\{\chi_0(axa),\chi_0(ax^{-1}a)\}\big),&\mbox{if $\phi(x)=\phi(x^{-1})$};\\
\phi(x),
&\mbox{if $\chi_0(ax^{-1}a){\ne}\phi(x){\ne}\phi(x^{-1}){\ne}\chi_0(axa)$};\\
\min\big(3\setminus\{\phi(x^{-1}),\chi_0(ax^{-1}a)\}\big),
&\mbox{if $\chi_0(ax^{-1}a){=}\phi(x){\ne}\phi(x^{-1}){\ne}\chi_0(axa)$};\\
\phi(x),
&\mbox{if $\chi_0(ax^{-1}a){\ne}\phi(x){\ne}\phi(x^{-1}){=}\chi_0(axa)$};\\
\phi(x^{-1}),
&\mbox{if $\chi_0(ax^{-1}a){=}\phi(x){\ne}\phi(x^{-1}){=}\chi_0(axa)$}.
\end{cases}
$$

Let $\chi_3:X\to 2$ be the Borel 2-coloring defined by $\chi_3(x)=1-\chi_1(x^{-1})$ for $x\in X$. It is clear that $\chi_3(x^{-1})\ne \chi_1(x)$ for all $x\in X$.
Finally, consider the Borel 2-coloring $\chi_4:X\to 2$ defined by 
$$\chi_4(x)=\min\big(\{0,1\}\setminus\{\chi_0(x^{-1})\}\big)\mbox{ \ for $x\in X$}.$$

The (Borel) colorings $\psi,\chi_0,\chi_1,\chi_2,\chi_3,\chi_4$ compose a (Borel) $k$-coloring
$\tilde\chi:X\times\IR\to k$,
$$\tilde\chi(x,t)=
\begin{cases}
\chi_t(x),&\mbox{if $t\in\{0,1,2,3,4\}$},\\
\psi(t),&\mbox{otherwise},\\
\end{cases}
$$
witnessing that the set $C\cup A$ is not $k$-centerpole for (Borel) colorings of $X\oplus \IR$.
\smallskip

2b) $v=2$ and $w=3$. Since $c_2^B(X)\ge 3>1$, there is a Borel 2-coloring $\chi_2:X\to 2$ witnessing that the singleton $\{a\}$ is not 2-centerpole for Borel colorings of $X$. By the same reason, there is a Borel 2-coloring $\chi_3:X\to 2$ witnessing that the singleton $\{b\}$ is not 2-centerpole for Borel colorings of $X$. 

Next consider the (Borel) colorings $\chi_1:X\to 2$, $\chi_4:X\to 3$, and $\chi_6:X\to 2$ defined by the formulas
$$
\begin{aligned}
\chi_1(x)&=1-\chi_3(ax^{-1}a),\\
\chi_4(x)&=\min\big(3\setminus\{\chi_0(ax^{-1}a),\chi_2(bx^{-1}b)\}\big),\\
\chi_6(x)&=\min\big(2\setminus\{\chi_0(bx^{-1}b)\}).
\end{aligned}$$

The (Borel) colorings $\psi$ and $\chi_t$, $t\in\{0,1,2,3,4,6\}$, compose the (Borel) coloring
$\tilde\chi:X\times\IR\to k$ defined by
$$\tilde\chi(x,t)=\begin{cases}
\chi_t(x),&\mbox{if $t\in\{0,1,2,3,4,6\}$},\\
\psi(t),&\mbox{otherwise}.
\end{cases}
$$
This coloring $\tilde \chi$ witnesses that the set $C\cup A$ is not $k$-centerpole for (Borel) colorings of $X\oplus\IR$.
\smallskip

2c) $v\notin\{1,2\}$. Since $c_2^B(X)>1$ there is a Borel 2-coloring $\chi_v:X\to 2$ witnessing that the singleton $\{a\}$ is not 2-centerpole for Borel colorings of $X$. By the same reason, there is a Borel 2-coloring $\chi_w:X\to \{1,2\}$ witnessing that the singleton $\{b\}$ is not 2-centerpole for Borel colorings of $X$.

Next, define the (Borel) colorings $\chi_{2v},\chi_{2w}:X\to 3$ by the formula
$$\chi_{2v}(x)=\min\big(3\setminus\{\chi_0(ax^{-1}a),\psi(2)\}\big)\mbox{ \ and \ }
\chi_{2w}(x)=\min\big(2\setminus\{\chi_0(bx^{-1}b)\}\big).$$
Here let us note that the points $2v$ and $2$ are symmetric with respect to $w$ in the group $\IR$. 
 
Finally, define a (Borel) $k$-coloring $\tilde\chi:X\oplus\IR\to k$ letting
$$\tilde\chi(x,t)=\begin{cases}
\chi_t(x)&\mbox{if $t\in\{0,v,w,2v,2w\}$}\\
\psi(t)&\mbox{otherwise}.
\end{cases}
$$This coloring witnesses that the set $C\cup A$ is not $k$-centerpole for (Borel) colorings of the topological group $X\oplus\IR$.  
\end{proof}

\begin{lemma}\label{l:c4>8} $c_4^B(\IR^m)\ge 8$ for all $m\ge 4$.
\end{lemma}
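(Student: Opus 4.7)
The plan is to parallel the proof of Lemma~\ref{l:c3=6}: dispose of the base case $m=4$ via the $T$-shape lower bound, and then induct on $m \ge 5$ using the enlargement lemmas of this section.

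For the base case, by Theorem~\ref{t5}(4) we have $t(\IR^4) = 12$, so Proposition~\ref{p1} gives $c_4^B(\IR^4) \ge t(\IR^4) = 12 \ge 8$.

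For the inductive step, suppose $c_4^B(\IR^{m-1}) \ge 8$ for some $m \ge 5$, and let $C \subset \IR^m$ be any subset with $|C| \le 7$. I would first observe the elementary fact that any $5$ points of $\IR^m$ lie in some affine subspace of dimension at most $4$; since $m-1 \ge 4$, such a subspace can be enlarged to an affine hyperplane $H \subset \IR^m$, and we may choose $H$ so that $|C \cap H| \ge 5$, i.e.\ $|C \setminus H| \le 2$. After a suitable affine transformation, identify $H$ with $\IR^{m-1} \times \{0\}$ and arrange $C \setminus H \subset \IR^{m-1} \times (0,\infty)$. By the inductive hypothesis, $|C \cap H| \le 7 < 8 \le c_4^B(\IR^{m-1})$, so $C \cap H$ is not $4$-centerpole for Borel colorings of $\IR^{m-1}$.

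To finish, I would apply, depending on $|C \setminus H| \in \{0,1,2\}$, one of Lemmas~\ref{l:+0}, \ref{l:+1}, or \ref{l:+2} with $X = \IR^{m-1}$ and $k=4$. The hypotheses $c_2^B(X) \ge 2$ (for Lemma~\ref{l:+1}) and $c_2^B(X) \ge 3$ (for Lemma~\ref{l:+2}) are both satisfied, since Theorem~\ref{t:exact}(2) gives $c_2^B(\IR^{m-1}) = 3$ whenever $m-1 \ge 2$. In each case the conclusion is that $C$ fails to be $4$-centerpole for Borel colorings of $\IR^{m-1} \oplus \IR \cong \IR^m$, contradicting the assumption $|C| \le 7$ and yielding $c_4^B(\IR^m) \ge 8$. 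The only step with any content is the linear-algebraic placement of a hyperplane that absorbs at least five of the seven points; once we have that, Lemma~\ref{l:+2} (the strongest of the enlargement lemmas we need) handles the worst case $|C \setminus H|=2$ uniformly, so no real obstacle remains.
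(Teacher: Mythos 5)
Your overall strategy --- base case from $t(\IR^4)=12$ plus an induction driven by the enlargement lemmas --- is exactly the paper's, but there is a genuine gap in how you produce the hyperplane $H$. Lemmas~\ref{l:+1} and \ref{l:+2} require the leftover points to lie in $X\times(0,\infty)$, i.e.\ strictly on \emph{one} side of $H$. You obtain $H$ by taking an arbitrary $5$-element subset of $C$, passing to its (at most $4$-dimensional) affine hull and enlarging it to a hyperplane; nothing in this construction prevents the two remaining points of $C$ from lying on opposite sides of $H$. Since any affine transformation carrying $H$ onto $\IR^{m-1}\times\{0\}$ permutes the two open half-spaces determined by $H$, you cannot then ``arrange'' $C\setminus H\subset\IR^{m-1}\times(0,\infty)$: for such an $H$ the hypothesis $A\subset X\times(0,\infty)$ of Lemma~\ref{l:+2} simply fails, and the argument stalls at the one step you describe as having no real obstacle.

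The repair is what the paper actually does: choose $H$ to be a \emph{support} hyperplane for $C$ meeting $C$ in at least $\min\{m,|C|\}\ge 5$ points. If $C$ lies in a hyperplane, take that hyperplane; then $C\setminus H=\emptyset$ and Lemma~\ref{l:+0} finishes. Otherwise $\conv(C)$ is a full-dimensional polytope in $\IR^m$, and the supporting hyperplane of any facet contains at least $m\ge 5$ affinely independent points of $C$ while keeping all of $C$ in one closed half-space, so the at most two points of $C\setminus H$ lie strictly on one side. With this choice (and a shift putting a point of $C\cap H$ at the origin so that $H$ is a subgroup and $\IR^m=H\oplus\IR$), the rest of your argument --- the inductive hypothesis applied to $C\cap H$, the bound $c_2^B(H)\ge 3$, and Lemma~\ref{l:+0}, \ref{l:+1} or \ref{l:+2} according to $|C\setminus H|$ --- goes through as you describe.
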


\begin{proof} This lemma will be proved by induction on $m\ge 4$. For $m=4$ the inequality $c_4^B(\IR^4)\ge t(\IR^4)=12\ge 8$ follows from Lemma~\ref{l11}.
Assume that for some $m\ge 4$ we know that $c_4^B(\IR^m)\ge 8$. The inequality $c_4^B(\IR^{m+1})\ge 8$ will follow as soon as we check that each 7-element subset $C\subset\IR^{m+1}$ is not 4-centerpole for Borel colorings of $\IR^{m+1}$. 

Given a 7-element subset $C\subset\IR^{m+1}$, find a support $m$-dimensional hyperplane $H\subset\IR^{m+1}$ that has at least $\min\{m+1,|C|\}\ge 5$ common points with the set $C$. After a suitable shift, we can assume that the intersection $C\cap H$ contains the origin of $\IR^{m+1}$. In this case $H$ is a linear subspace of $\IR^{m+1}$ and $\IR^{m+1}$ can be written as the direct sum $\IR^{m+1}=H\oplus \IR$.

Since $|H\cap C|\le|C|\le7$, the inductive assumption guarantees that $H\cap C$ is not 4-centerpole for Borel colorings of $H$. By Lemma~\ref{l:c3=6}, $c_3^B(\IR^m)\ge 3$. Since $|C\setminus H|\le 2$, we can apply Lemma~\ref{l:+2} and conclude that $C$ is not 4-centerpole for Borel colorings of the topological group $H\oplus\IR=\IR^{m+1}$.
\end{proof}

\section{centerpole sets in subgroups and groups}\label{s:group-subgroup}

It is clear that each $k$-centerpole subset $C\subset H$ in a subgroup $H$ of a topological group $G$ is $k$-centerpole in $G$. In some cases the converse statement also is true.

\begin{lemma}\label{l:sub} If a subset $C$ of an abelian topological group $G$ is $k$-centerpole in $G$ for some $k\ge 2$, then it is $k$-centerpole in the subgroup $H=\langle C\rangle+G[2]$. 
\end{lemma}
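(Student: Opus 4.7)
The plan is to leverage the hypothesis $G[2]\subset H$ (implicit in the definition of $H$) to extend any $k$-coloring of $H$ to a $k$-coloring of $G$ without creating ``new'' symmetric monochromatic subsets. More precisely, given any $k$-coloring $\chi:H\to k$, I will construct an extension $\tilde\chi:G\to k$ satisfying
\[
\tilde\chi(g)\ne\tilde\chi(2c-g)\qquad\text{for every }c\in C\text{ and every }g\in G\setminus H.
\]
Since $C\subset H$, we have $2c\in H$, so the map $g\mapsto 2c-g$ preserves $G\setminus H$, making the condition well-posed. Once such $\tilde\chi$ is in hand, any $\tilde\chi$-monochromatic set $S\subset G$ symmetric about some $c\in C$ cannot meet $G\setminus H$ (else $g$ and $2c-g$ would both lie in $S$ but have distinct colors), and hence $S\subset H$.

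The construction of $\tilde\chi|_{G\setminus H}$ reduces to a graph-coloring problem. Consider the symmetry graph $\Gamma$ with vertex set $G\setminus H$ and an edge joining $g$ to $2c-g$ whenever $c\in C$ and $g\ne 2c-g$. The key claim is that $\Gamma$ is bipartite; this uses both ingredients of $H=\langle C\rangle+G[2]$. Loops are ruled out because $g=2c-g$ forces $g-c\in G[2]\subset H$ and hence $g\in H$. For a closed walk $g_0\to g_1\to\cdots\to g_n=g_0$ with $g_{i+1}=2c_i-g_i$ and $c_i\in C$, a direct induction yields
\[
g_n=(-1)^n g_0+2\sum_{i=0}^{n-1}(-1)^{n-1-i}c_i.
\]
If $n$ is odd, then $g_n=g_0$ gives $2\bigl(g_0-\sum_{i=0}^{n-1}(-1)^{n-1-i}c_i\bigr)=0$, so $g_0\in\langle C\rangle+G[2]=H$, contradicting $g_0\in G\setminus H$. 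Hence $\Gamma$ has no odd closed walks, so it is bipartite and admits a proper $2$-coloring, which I realize using any two colors from $\{0,\dots,k-1\}$ (available since $k\ge 2$). Setting $\tilde\chi=\chi$ on $H$ and this $2$-coloring on $G\setminus H$ produces the required extension.

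Finally, apply the hypothesis that $C$ is $k$-centerpole in $G$ to obtain $c\in C$ and an unbounded $\tilde\chi$-monochromatic $S\subset G$ with $S=2c-S$. By the preceding paragraph $S\subset H$, so $S$ is $\chi$-monochromatic and symmetric about $c$; its unboundedness in $H$ follows from its unboundedness in $G$ because $H$ carries the subspace topology, whence total boundedness of subsets of $H$ is the same in $H$ and in $G$. The main obstacle is the bipartiteness of $\Gamma$, which is precisely where both inclusions $C\subset H$ and $G[2]\subset H$ enter; once this is in place the rest of the argument is routine.
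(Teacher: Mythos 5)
Your argument is correct and is essentially the paper's proof: both extend $\chi$ to $G$ by a two-coloring of $G\setminus H$ that separates every pair $\{g,\,2c-g\}$, and both hinge on the same algebraic fact that a relation of the form $2(g-z)=0$ with $z\in\langle C\rangle$ would force $g\in\langle C\rangle+G[2]=H$, which is impossible for $g\in G\setminus H$. The paper realizes the two-coloring slightly more directly, by making each coset $x+2\langle C\rangle$ in $G\setminus H$ monochromatic with a color different from that of $-x+2\langle C\rangle$ (these two cosets are disjoint by the same computation), which sidesteps your bipartiteness/odd-closed-walk analysis.
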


\begin{proof} Observe that for each $x\in G\setminus H$ the cosets $c+2\langle C\rangle$ and $-x+2\langle C\rangle$ are disjoint. Assuming the opposite, we would conclude that $2x\in 2\langle C\rangle$ and hence $x\in\langle C\rangle+G[2]=H$, which contradicts the choice of $x$.  

Now we are able to prove that the set $C$ is $k$-centerpole in the group $H$. Given any $k$-coloring $\chi:H\to k$, extend $\chi$ to a $k$-coloring $\tilde \chi:G\to k$ such that for each $x\in G\setminus H$ the coset $x+2\langle C\rangle$ is monochromatic and its color is different from the color of the coset $-x+2\langle C\rangle$.

Since $C$ is $k$-centerpole in the group $G$, there is an unbounded monochromatic subset $S\subset G$ such that $S=2c-S$ for some $c\in C$. We claim that $S\subset H$. Assuming the converse, we would find a point $x\in S\setminus H$ and conclude that the coset $x+2\langle C\rangle$ has the same color as the coset $2c-x+2\langle C\rangle=-x+2\langle C\rangle$, which contradicts the choice of the coloring $\tilde\chi$.
\end{proof}

The Borel version of this result is a bit more difficult.

\begin{lemma}\label{l:subB} Let $k\ge 2$ and $H$ be a Borel subgroup of an abelian topological group $G$ such that $G[2]\subset H$. A subset $C\subset H$ is $k$-centerpole for Borel colorings of $H$ if $C$ is $k$-centerpole for Borel colorings of $G$, the subgroup $2H=\{2x:x\in H\}$ is closed in $G$, and  the subspace $X=(G/2H)\setminus (H/2H)$ contains a Borel subset $B$ that has one-point intersection with each set $\{x,-x\}$, $x\in X$. Such a Borel set $B\subset X$ exists if the space $X$ is paracompact.
\end{lemma}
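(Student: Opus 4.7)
The plan is to mimic the proof of Lemma~\ref{l:sub}, replacing the ad hoc assignment of colors on non-$H$ cosets by a Borel assignment driven by the selector $B$.

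First I would record the coset-disjointness that makes the construction work: for $x\in G\setminus H$ the cosets $x+2H$ and $-x+2H$ are disjoint, since $2x\in 2H$ would force $x\in H+G[2]=H$ under the hypothesis $G[2]\subset H$. Consequently the involution $x\mapsto -x$ on the Hausdorff quotient $X=(G/2H)\setminus(H/2H)$ (Hausdorff because $2H$ is closed in $G$) is fixed-point-free, and $X$ partitions as $B\sqcup(-B)$.

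Given a Borel coloring $\chi:H\to k$, I would extend it to $\tilde\chi:G\to k$ by setting $\tilde\chi|_H=\chi$, $\tilde\chi(x)=0$ for $q(x)\in B$, and $\tilde\chi(x)=1$ for $q(x)\in -B$, where $q:G\to G/2H$ is the quotient homomorphism. Borelness of $\tilde\chi$ is immediate: $H$ is Borel in $G$ by hypothesis, and $q^{-1}(B)$, $q^{-1}(-B)$ are Borel in $G\setminus H$ because any Borel set in the subspace $X$ is the trace of a Borel set in $G/2H$, whose $q$-preimage is Borel by continuity of $q$. Now apply the hypothesis that $C$ is $k$-centerpole for Borel colorings of $G$ to obtain an unbounded monochromatic $S\subset G$ with $S=2c-S$ for some $c\in C\subset H$. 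If some $x\in S$ lay in $G\setminus H$, then $2c-x\in S$ has the same color; but $2c\in 2H$ gives $q(2c-x)=-q(x)$, so $q(x)\in B$ forces the colors $0$ and $1$ on $x$ and $2c-x$ respectively, a contradiction. Hence $S\subset H$, and since total boundedness of subsets of $H$ coincides in $H$ and in $G$, $S$ remains an unbounded monochromatic subset of $H$ symmetric about $c\in C$. This proves that $C$ is $k$-centerpole for Borel colorings of $H$.

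The remaining task is the existence of $B$ when $X$ is paracompact, and this I expect to be the most delicate step. The fixed-point-free continuous involution $\iota$ together with Hausdorffness of $X$ let one cover $X$ by open sets $U$ with $U\cap(-U)=\emptyset$; paracompactness yields a locally finite open refinement $\{V_\alpha\}_{\alpha\in A}$ retaining this separation property. Well-ordering $A$ and putting
\[
B=\bigcup_{\alpha\in A}\Bigl(V_\alpha\setminus\bigcup_{\beta<\alpha}(V_\beta\cup(-V_\beta))\Bigr)
\]
picks exactly one representative from each orbit: the minimal $\alpha$ with $V_\alpha\cap\{x,-x\}\ne\emptyset$ is well defined by local finiteness, and exactly one of $x,-x$ lies in $V_\alpha$ by the separation property. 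The subtle point is the Borelness of this possibly uncountable union: local finiteness makes $B$ locally a finite Boolean combination of open sets, and one must assemble these local descriptions globally, for instance by first replacing $\{V_\alpha\}$ with a $\sigma$-discrete refinement (Michael's theorem) so that $B$ becomes a countable union of Borel pieces. This descriptive step is the principal technical obstacle I foresee.
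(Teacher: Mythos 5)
Your proof of the main implication is correct and coincides with the paper's: the same fixed-point-free observation on $X$, the same two-color extension of $\chi$ over $G\setminus H$ driven by the selector $B$, and the same argument that a monochromatic $S$ with $S=2c-S$, $c\in H$, cannot meet $G\setminus H$ because $q(2c-x)=-q(x)$ would receive the opposite color.

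For the second assertion (existence of $B$ when $X$ is paracompact) you have the right construction but, as you yourself flag, you stop short of verifying Borelness of the transfinite union, so this part is not yet a proof. The verification does go through, and essentially along the line you anticipate. The paper's route: the orbit map $q_2:X\to X/C_2$ of the involution is closed, so $X/C_2$ is paracompact (Michael's theorem on closed images); one covers $X/C_2$ by sets $q_2(U_x)$ with $U_x\cap(-U_x)=\emptyset$ and $U_x=-U_{-x}$, takes a $\sigma$-discrete open refinement $\U=\bigcup_{n\in\w}\U_n$, puts $W_n=\bigcup_{U\in\U_n}q_2^{-1}(U)\cap U_{x_U}$ (open, since each $\U_n$ is discrete and hence its members are pairwise disjoint), and sets $B=\bigcup_{n}\big(W_n\setminus\bigcup_{i<n}(W_i\cup(-W_i))\big)$, a \emph{countable} union of differences of open sets. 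Your variant — a $\sigma$-discrete refinement $\{V_\alpha\}$ of a cover of $X$ itself, well-ordered level by level — also works, but you must still check that the within-level union is Borel despite being indexed by a possibly uncountable discrete family. The saving observation is that, within a discrete level $A_n$, the $V_\alpha$ are pairwise disjoint, so $\bigcup_{\alpha\in A_n}\big(V_\alpha\setminus\bigcup_{\beta<\alpha}(V_\beta\cup(-V_\beta))\big)$ equals $\big(\bigcup_{\alpha\in A_n}V_\alpha\big)\setminus\Big(O_n\cup\bigcup_{\beta<\alpha,\ \alpha,\beta\in A_n}\big(V_\alpha\cap(-V_\beta)\big)\Big)$, where $O_n$ is the (open) union of all earlier levels and the second union is open as an arbitrary union of open sets; hence each level contributes a difference of two open sets and $B$ is Borel. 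Without this (or the paper's equivalent bookkeeping in the orbit space), the "locally a finite Boolean combination of open sets" remark does not by itself yield global Borelness, so this step must be written out.
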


\begin{proof}  Given any Borel $k$-coloring $\chi:H\to k$, extend $\chi$ to a Borel $k$-coloring $\tilde \chi:G\to k$ defined by
$$\tilde \chi(x)=\begin{cases}
\chi(x),&\mbox{if $x\in H$},\\
0,&\mbox{if $x\in G\setminus H$ and $x+2H\in B$},\\
1,&\mbox{if $x\in G\setminus H$ and $x+2H\notin B$}
\end{cases}
$$
Since $C$ is $k$-centerpole for Borel colorings of the group $G$, there is an unbounded monochromatic subset $S\subset G$, symmetric with respect to some point $c\in C$. We claim that $S\subset H$, witnessing that $C$ is $k$-centerpole for Borel colorings of $H$. 

Assuming conversely that $S\not\subset H$, find a point $x\in S\setminus H$. It follows that $x$ and $2c-x$ have the same color. If this color is 0, then the cosets $x+2H$ and $2c-x+2H=-x+2H=-(x+2H)$ both belong to the set $B\subset G/2H$. By our hypothesis $B$ has one-point intersection with the set $\{x+2H,-(x+2H)\}$. Consequently, $x+2H=-(x+2H)$ and hence $2x\in 2H$ and $x\in H+G[2]=H$, which contradicts the choice of the point $x$. If the color of the cosets $x+2H$ and $2c-x+2H=-(x+2H)$ is 1, then $(x+2H),-(x+2H)\notin B$ and then $x+2H=-(x+2H)$ because $B$ has one-point intersection with the set $\{x+2H,-(x+2H)\}$. This again leads to a contradiction. 

\begin{claimm} If the space $X=(G/2H)\setminus (H/2H)$ is paracompact, then $X$ contains a Borel subset $B\subset X$ that has one-point intersection with each set $\{x,-x\}$, $x\in X$.
\end{claimm}

Consider the action
$$\alpha:C_2\times X\to X,\quad\alpha:(\e,x)\mapsto\e\cdot x,$$
of the cyclic group $C_2=\{1,-1\}$ on the space $X$ and let $X/C_2=\big\{\{x,-x\}:x\in X\big\}$ be the orbit space of this action. It is easy to check that the orbit map $q:X\to X/C_2$ is closed and then the orbit space $X/C_2$ is paracompact as the image of a paracompact space under a closed map, see Michael Theorem 5.1.33 in \cite{En}.
 
Since $H\supset 2H+G[2]$, for every $x\in G\setminus H$ the cosets $x+2H$ and $-x+2H$ are disjoint, which implies that each point $x\in X$ is distinct from $-x$.
Then each point $x\in X$ has a neighborhood $U_x\subset X$ such that $U_x\cap -U_x=\emptyset$. Replacing $U_x$ by $U_x\cap(-U_{-x})$ we can additionally assume that $U_x=-U_{-x}$. Now consider the open neighborhood $U_{\pm x}=q(U_x)=q(U_{-x})\subset X/C_2$ of the orbit $\{x,-x\}\in X/C_2$ of the point $x\in X$. By the paracompactness of $X/C_2$ the open cover $\{U_{\pm x}:x\in X\}$ of $X/C_2$ has a $\Sigmaup$-discrete refinement $\U=\bigcup_{n\in\w}\U_n$. This means that each family $\U_n$, $n\in\w$, is discrete in $X/C_2$. For each $U\in\U$ find a point $x_U\in X$ such that $U\subset U_{\pm x_U}$. For every $n\in\w$ consider the open subset $W_n=\bigcup_{U\in\U_n}q^{-1}(U)\cap U_{x_U}$ of the space $X$ and let $\pm W_n=-W_n\cup W_n$. One can check that the Borel subset  $$B=\bigcup_{n\in\w}(W_n\setminus\bigcup_{i<n}\pm W_i)$$ of  $X$ 
 has one-point intersection 
 with each orbit $\{x,-x\}$, $x\in X$. 
\end{proof}

The following lemma will be helpful in the proof of the upper bound $rc^B_k(G)\le c_k^B(G)-2$ from Proposition~\ref{p:rc}. 

\begin{lemma}\label{l:afhull} Let $k\ge 4$ and $C\subset\IR^\w$ be a finite $k$-centerpole subset for Borel colorings of $\IR^\w$. Then the affine hull of $C$ in $\IR^\w$ has dimension $\le |C|-3$.
\end{lemma}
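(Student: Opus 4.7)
The plan is to first reduce from $\IR^\w$ to a finite-dimensional subgroup. Since being a $k$-centerpole (for Borel colorings) is translation-invariant, I may shift $C$ to arrange that $0\in C$; then $\mathrm{aff}(C)$ becomes a linear subspace $V\subset\IR^\w$ of dimension $d:=\dim\mathrm{aff}(C)$. The subspace $V$ is closed in the Fr\'echet space $\IR^\w$, $2V=V$, and the quotient $\IR^\w/V$ is again a Fr\'echet space, so $X:=(\IR^\w/V)\setminus\{0\}$ is metrizable and hence paracompact. Lemma~\ref{l:subB} therefore applies with $G=\IR^\w$ and $H=V$ (noting $(\IR^\w)[2]=\{0\}\subset V$), yielding that $C$ is $k$-centerpole for Borel colorings of $V\cong\IR^d$. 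In particular $c_k^B(\IR^d)<\binfty$, which forces $d\ge k$ by Theorem~\ref{t:exact}(6). It thus suffices to prove $|C|\ge d+3$, which I would establish by induction on $d\ge k$.

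For the base case $d=k$, Theorem~\ref{t:bounds}(2) gives
$|C|\ge c_k^B(\IR^k)\ge \tfrac12(k^2+3k-4)\ge k+3$,
the last inequality being equivalent to $k^2+k-10\ge 0$, which holds for all $k\ge 4$.

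For the inductive step $d>k$, consider the $d$-polytope $P:=\conv(C)\subset\IR^d$. Any facet of $P$ is a $(d-1)$-polytope with at least $d$ vertices, and all vertices of $P$ lie in $C$, so the supporting hyperplane $H$ of this facet satisfies $|C\cap H|\ge d$. If $|C\setminus H|\ge 3$, the bound $|C|\ge d+3$ is immediate, and $|C\setminus H|=0$ is incompatible with $\dim\mathrm{aff}(C)=d$. In the remaining cases $|C\setminus H|\in\{1,2\}$, identify $\IR^d=H\oplus\IR$ so that $C\setminus H\subset H\times(0,\infty)$. Since $d-1\ge k-1\ge 3$, Theorem~\ref{t:exact}(2) gives $c_2^B(H)=c_2^B(\IR^{d-1})=3$, and the contrapositives of Lemma~\ref{l:+1} (when $|C\setminus H|=1$) and Lemma~\ref{l:+2} (when $|C\setminus H|=2$) apply and conclude that $C':=C\cap H$ is itself $k$-centerpole for Borel colorings of $H\cong\IR^{d-1}$, and hence also of $\IR^\w$ (a subset of the closed subgroup $H$ has the same boundedness in $H$ as in $\IR^\w$).

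Set $d':=\dim\mathrm{aff}(C')$. Since adjoining $j$ points raises affine-hull dimension by at most $j$, we have $d'\ge d-|C\setminus H|$, while $C'\subset H$ gives $d'\le d-1$; thus $d-|C\setminus H|\le d'\le d-1$. The condition $d'\ge k$ needed to invoke the inductive hypothesis is automatic, since otherwise $c_k^B(\IR^{d'})=\binfty$ would contradict the existence of the finite $k$-centerpole set $C'\subset\IR^{d'}$. Applying the inductive hypothesis to $C'$ gives $|C'|\ge d'+3$, whence
$$|C|=|C'|+|C\setminus H|\ge d'+3+|C\setminus H|\ge d+3$$
in each of the three possibilities $(|C\setminus H|,d')\in\{(1,d{-}1),(2,d{-}1),(2,d{-}2)\}$. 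The main obstacle, and the source of the constant $3$ in the bound, is the last case $(|C\setminus H|,d')=(2,d{-}2)$, where the argument is exactly tight; this tightness is precisely what is reflected in the asserted inequality $\dim\mathrm{aff}(C)\le|C|-3$.
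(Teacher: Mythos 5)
Your proof is correct and follows essentially the same route as the paper's: both hinge on taking a supporting hyperplane $H$ of $\conv(C)$ containing all but at most two points of $C$, then using Lemmas~\ref{l:+1} and~\ref{l:+2} (in contrapositive form) together with Lemma~\ref{l:subB} to descend to $C\cap H$ and close an induction. The only organizational differences are that you induct directly on $\dim\mathrm{aff}(C)$ rather than by contradiction on $|C|$, and you anchor the base case $d=k$ with the bound $c_k^B(\IR^k)\ge\tfrac12(k^2+3k-4)\ge k+3$ from Theorem~\ref{t:bounds}(2) instead of the bound $c_3^B(\IR^\w)\ge 6$ of Lemma~\ref{l:c3=6}; both inputs are available non-circularly at this point of the paper.
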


\begin{proof} This lemma will be proved by induction on the cardinality $|C|$.

First observe that $|C|\ge c_k^B(\IR^\w)\ge c_3^B(\IR^\w)\ge 6$ by Lemma~\ref{l:c3=6}. So, we start the induction with $|C|=6$. 

Suppose that either $m=6$ or $m>6$ and the lemma is true for all $C$ with $6\le|C|<m$. Fix a $k$-centerpole subset $C\subset\IR^\w$ for Borel colorings of cardinality $|C|=m$. We need to show that the affine hull $A$ of $C$ has dimension $\dim A\le m-3$. Assuming the opposite, we can find a support hyperplane $H\subset A$ for $C$ such that $|H\cap C|\ge \dim H+1=\dim A\ge|C|-2$ and hence $0<|C\setminus H|\le 2$.
 After a suitable shift, we can assume that $H$ contains the origin of $\IR^\w$ and hence is a subgroup of $\IR^\w$. In this case the affine hull $A$ is a linear subspace in $\IR^\w$ that can be identified with the direct sum $H\oplus\IR$. It follows that $\dim H=\dim A-1\ge |C|-2-1\ge|C\cap H|-2$.

We claim that the set $H\cap C$ is not $k$-centerpole for Borel colorings of the topological group $H$. 

If $6\le |C\cap H|<|C|=m$, then by the inductive assumption, the set $C\cap H$ is not $k$-centerpole for Borel colorings of $\IR^\w$ because its affine hull $H$ has dimension $\dim H\ge |C\cap H|-2$. If $|C\cap H|<6$ (which happens for $m=6$), then 
the inequalities
$c_k^B(H)\ge c_3^B(H)\ge 6=m=|C|>|H\cap C|$ given by Lemma~\ref{l:c3=6} guarantee that $C\cap H$ is not $k$-centerpole for Borel colorings of $\IR^\w$.

By (the proof) of Proposition 1 in \cite{BDR}, $c_2^B(H)=3$. Since $H$ is a support hyperplane for $C$ and $|C\setminus H|\le 2$, we can apply Lemma~\ref{l:+2} and conclude that $C$ is not $k$-centerpole for Borel colorings of $H\oplus \IR=A$.
Since the subgroup $2A$ is closed in the metrizable group $\IR^\w$, by Lemma~\ref{l:subB}, $C$ is not $k$-centerpole for Borel colorings of $\IR^\w$ and this is a desired contradiction that completes the proof of the inductive step and base of the induction.
\end{proof}

\section{Stability Properties}\label{s:stab}

In this section we shall prove some particular cases of the Stability Theorem~
\ref{t:stab}.

\begin{lemma}\label{l1:stab} For any numbers $k\ge 2$ and $n\le m$
$$c_k^B(\IR^n\times\IZ^{m-n})=\begin{cases}
c_k^B(\IR^n\times \IZ^{\w}),&\mbox{if $m\ge rc_k^B(\IR^n\times\IZ^\w)$,}\\
c_k^B(\IR^\w),&\mbox{if $n\ge rc_k^B(\IR^\w)$.}
\end{cases}
$$
\end{lemma}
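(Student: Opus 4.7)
The plan is to treat both equalities by the same two-step pattern: subgroup monotonicity of $c_k^B$ supplies one inequality, while Lemma \ref{l:subB} combined with a topological embedding of the group generated by an optimal centerpole set supplies the reverse.

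For the easy direction, since $\IR^n\times\IZ^{m-n}$ embeds as a topological subgroup of $\IR^n\times\IZ^\w$ (using the first $m-n$ copies of $\IZ$) and of $\IR^\w$ (as a coordinate subspace together with a discrete lattice), monotonicity yields both $c_k^B(\IR^n\times\IZ^{m-n})\ge c_k^B(\IR^n\times\IZ^\w)$ and $c_k^B(\IR^n\times\IZ^{m-n})\ge c_k^B(\IR^\w)$. For the reverse inequalities, I would choose an optimal witness $C$: a Borel $k$-centerpole subset of the larger group of cardinality $c_k^B$ realizing the minimum $r_\IZ(\langle C\rangle)=rc_k^B$. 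The hypothesis bounds this $\IZ$-rank by $m$ in case (1) and by $n$ in case (2). Since the ambient group $G$ is torsion-free, $G[2]=\{0\}\subset\langle C\rangle$; the finitely generated subgroup $2\langle C\rangle$ is closed in $G$, and the quotient $G/2\langle C\rangle$ is metrizable hence paracompact. Lemma \ref{l:subB} then asserts that $C$ remains $k$-centerpole for Borel colorings of $\langle C\rangle$.

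Next I would build a topological group embedding $\iota\colon\langle C\rangle\hookrightarrow\IR^n\times\IZ^{m-n}$ and transfer $C$ along it. Since $C$ is finite, $\langle C\rangle$ is a finitely generated torsion-free abelian group whose closure in the ambient group is a Lie-type subgroup $\overline{\langle C\rangle}\cong\IR^s\times\IZ^{t+d'}$: here $\IR^s\times\IZ^t$ is the closure in $\IR^n$ (respectively in the Euclidean span inside $\IR^\w$) of the kernel of the projection of $\langle C\rangle$ to the discrete component, and $d'$ is the rank of this projection. The topological $\IZ$-rank of $\langle C\rangle$ equals the rank $s+t+d'$ of its closure, which by hypothesis is at most $m$ in case (1) and at most $n$ in case (2); together with $s$ not exceeding the ambient $\IR$-rank, these rank bounds allow us to embed the closure $\overline{\langle C\rangle}$ topologically into $\IR^n\times\IZ^{m-n}$ by a coordinate-block inclusion, absorbing the extra discrete factors either into the free part of $\IR^n$ (discretely as a lattice) or into $\IZ^{m-n}$. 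Restricting this inclusion to $\langle C\rangle$ produces $\iota$. Then $\iota(C)$ is $k$-centerpole in $\iota(\langle C\rangle)$, and monotonicity upward to the overgroup $\IR^n\times\IZ^{m-n}$ gives $c_k^B(\IR^n\times\IZ^{m-n})\le|C|$, as required.

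The delicate step will be the identification $r_\IZ(\langle C\rangle)=s+t+d'$, i.e., the coincidence of the topological $\IZ$-rank of a finitely generated subgroup of $\IR^n\times\IZ^\w$ (or of $\IR^\w$) with the rank of its topological closure. Once this structural fact is in place, the hypotheses $m\ge rc_k^B(\IR^n\times\IZ^\w)$ and $n\ge rc_k^B(\IR^\w)$ provide exactly the number of $\IZ$-factors (respectively $\IR$-factors) required to house $\langle C\rangle$ in $\IR^n\times\IZ^{m-n}$, and the two-step scheme closes.
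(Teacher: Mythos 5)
Your overall architecture coincides with the paper's: the easy inequality comes from subgroup monotonicity, and the reverse inequality comes from taking a witness $C$ realizing $rc_k^B$, pushing the centerpole property down to a small subgroup via Lemma~\ref{l:subB}, and re-embedding that subgroup into $\IR^n\times\IZ^{m-n}$ using the rank hypotheses. However, there is a genuine gap in the step where you invoke Lemma~\ref{l:subB} with $H=\langle C\rangle$: you assert that ``the finitely generated subgroup $2\langle C\rangle$ is closed in $G$,'' and this is false in general. A finitely generated subgroup of a group with a nontrivial $\IR$-factor need not be closed: already for $C=\{0,(1,0,0,\dots),(\sqrt2,0,0,\dots)\}\subset\IR^\w$ the group $2\langle C\rangle\supset 2\IZ+2\sqrt2\,\IZ$ is dense in the first coordinate line, so $G/2\langle C\rangle$ is not even Hausdorff, and the hypotheses of Lemma~\ref{l:subB} (closedness of $2H$, paracompactness of $(G/2H)\setminus(H/2H)$) cannot be met. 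So the conclusion that $C$ is $k$-centerpole for Borel colorings of $\langle C\rangle$ is not justified as written.

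The repair is exactly what the paper does: apply Lemma~\ref{l:subB} not to $\langle C\rangle$ but to the \emph{closed} subgroup $H=L\cap G$, where $L$ is the $\IR$-linear span of $C$ (of dimension $r=r_\IZ(\langle C\rangle)$). By Theorem~6 of \cite{Morris}, $H\cong\IR^s\times\IZ^{r-s}$ with $s\le n$ and $r\le m$, so $2H$ is closed and the quotient is metrizable; Lemma~\ref{l:subB} then yields that $C$ is $k$-centerpole for Borel colorings of $H$, and $H$ embeds into $\IR^n\times\IZ^{m-n}$ by the coordinate-block argument you describe. Note that your later passage to the closure $\overline{\langle C\rangle}$ shows you only ever need the centerpole property in this closed group, not in $\langle C\rangle$ itself, so the stronger (and unproved) intermediate claim is unnecessary; your ``delicate step'' identifying $r_\IZ(\langle C\rangle)$ with the rank of the closure is correct and is implicitly used by the paper as well.
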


\begin{proof} First assume that $m\ge rc_k^B(\IR^n\times\IZ^\w)$. By the definition of the number $r=rc_k^B(\IR^n\times\IZ^\w)$, the topological group $G=\IR^n\times \IZ^\w$ contains a $k$-centerpole subset $C\subset G$ of cardinality $|C|=c_k^B(G)$ that generates a subgroup $\langle C\rangle\subset \IZ^\w$ of $\IZ$-rank $r$. 
It follows that the linear subspace $L\subset \IR^n\times\IR^\w$ generated by the set $C$ has dimension $r$. Then $H=L\cap G$, being a closed 
subgroup of $\IZ$-rank $r$ in the $r$-dimensional vector space $L$ is topologically isomorphic to $\IR^s\times\IZ^{r-s}$ for some $s\le r\le m$, see Theorem 6 in \cite{Morris}. Taking into account that $H$ is a closed subgroup of $G=\IR^n\times\IZ^\w$, we conclude that $s\le n$. By Lemma~\ref{l:subB}, the set $C$ is $k$-centerpole in $H$ for Borel colorings. Consequently,
$$c_k^B(\IR^n\times\IZ^\w){\le} c_k^B(\IR^n\times \IZ^{m-n}){\le} c_k^B(\IR^s\times\IZ^{r-s}){=}c_k^B(H){\le} |C|{=}c_k^B(G){=}c_k^B(\IR^n\times\IZ^\w)$$implies 
 the desired equality $c_k^B(\IR^n\times\IZ^{m-n})=c_k^B(\IR^n\times\IZ^\w)$.
\smallskip

Now assume that $n\ge rc_k^B(\IR^\w)$. In this case we can repeat the above argument for a set $C\subset \IR^\w$ of cardinality $|C|=c_k^B(\IR^\w)$ that generates a subgroup $\langle C\rangle \subset\IR^\w$ of $\IZ$-rank $r=rc_k^B(\IR^\w)$. Then the linear subspace $L\subset\IR^\w$ generated by the set $C$ is topologically isomorphic to $\IR^r$. By Lemma~\ref{l:subB}, the set  $C$ is $k$-centerpole for Borel colorings of $L$. Since $\IR^r\hookrightarrow \IR^n\times\IZ^{m-n}\hookrightarrow\IR^\w$, we get
$$c_k^B(\IR^\w)\le c_k^B(\IR^n\times\IZ^{m-n})\le c_k^B(\IR^r)=c_k^B(L)\le|C|=c_k^B(\IR^\w)$$and hence $c_k^B(\IR^n\times\IZ^{m-n})=c_k^B(\IR^\w)$. 
\end{proof}

\begin{lemma}\label{l2:stab} $c_k(\IR^n\times\IZ^{m-n})=c_k^B(\IZ^\w)$ for any numbers $k\in\IN$ and $n\le m$ with $m\ge c_k^B(\IZ^\w)$.
\end{lemma}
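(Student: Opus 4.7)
The plan is to verify both inequalities $c_k(\IR^n\times\IZ^{m-n})\le c_k^B(\IZ^\w)$ and $c_k(\IR^n\times\IZ^{m-n})\ge c_k^B(\IZ^\w)$, the first being a short application of monotonicity together with Lemma~\ref{l1:stab}, the second an adaptation of the coset-extension construction from the proof of Lemma~\ref{l:sub}.

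For the upper bound, observe that $\IZ^m=\IZ^n\times\IZ^{m-n}$ embeds as a closed discrete subgroup of $G:=\IR^n\times\IZ^{m-n}$ via the natural inclusion $\IZ^n\subset\IR^n$. The monotonicity of $c_k$ with respect to subgroups therefore yields $c_k(G)\le c_k(\IZ^m)=c_k^B(\IZ^m)$, the second equality holding because $\IZ^m$ is discrete. Since $rc_k^B(\IZ^\w)\le c_k^B(\IZ^\w)\le m$, Lemma~\ref{l1:stab} applied with $n$ replaced by $0$ gives $c_k^B(\IZ^m)=c_k^B(\IZ^\w)$, so $c_k(G)\le c_k^B(\IZ^\w)$.

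For the lower bound, I would fix an arbitrary finite set $C\subset G$ with $|C|<c_k^B(\IZ^\w)$ and produce a $k$-coloring of $G$ witnessing that $C$ is not $k$-centerpole. Put $H:=\langle C\rangle$; since $G$ is torsion-free and $C$ is finite, $H$ is a finitely generated torsion-free abelian group, hence abstractly isomorphic to $\IZ^s$ for some $s\le|C|$. Because $\IZ^s\hookrightarrow\IZ^\w$, monotonicity yields $c_k(\IZ^s)=c_k^B(\IZ^s)\ge c_k^B(\IZ^\w)>|C|$, so $C$, identified with its image in $\IZ^s$, fails to be $k$-centerpole in the discrete group $\IZ^s$; I pick a coloring $\chi_0\colon H\to k$ admitting no infinite monochromatic subset symmetric about any point of $C$. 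Using $G[2]=\{0\}$, for every $x\in G\setminus H$ one has $2x\notin 2H$, so the cosets $x+2H$ and $-x+2H$ are distinct, and I extend $\chi_0$ to $\tilde\chi\colon G\to k$ exactly as in the proof of Lemma~\ref{l:sub}, coloring each such coset $x+2H$ constantly so that $x+2H$ and $-x+2H$ receive different colors. If $S\subset G$ were an unbounded monochromatic set symmetric about some $c\in C$, the Lemma~\ref{l:sub} argument forces $S\subset H$, since any $x\in S\setminus H$ together with $2c-x\in -x+2H$ would have different $\tilde\chi$-colors; but then $S\subset H$ is an infinite (unboundedness in $G$ implies infiniteness) monochromatic subset of $H$ symmetric about $c$, contradicting the choice of $\chi_0$.

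The main subtlety is that $H=\langle C\rangle$ need not be a discrete subgroup of $G$ — it may be dense in some closed subtorus of $\IR^n\times\IZ^{m-n}$ — but this is harmless, because $\chi_0$ has been chosen to forbid the strictly stronger property of admitting any infinite symmetric monochromatic subset, which suffices to rule out unbounded ones once the coset-colouring has forced $S\subset H$.
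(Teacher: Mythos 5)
Your proof is correct and follows essentially the same route as the paper: the upper bound via $c_k(G)\le c_k(\IZ^m)=c_k^B(\IZ^m)=c_k^B(\IZ^\w)$ using Lemma~\ref{l1:stab}, and the lower bound by reducing to the finitely generated subgroup $\langle C\rangle\cong\IZ^s$ through the $2H$-coset coloring of Lemma~\ref{l:sub} (the paper invokes that lemma as a black box for an optimal centerpole set in $\IR^m$, whereas you rerun its proof contrapositively for an arbitrary small $C\subset G$ — a cosmetic difference). The only caveat is that the coset 2-coloring needs $k\ge 2$, so the case $k=1$ (where both sides are trivially $1$) should be dispatched separately, as the paper does.
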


\begin{proof} For $k=1$ the equality $c_k(\IR^n\times\IZ^{m-n})=1=c_k^B(\IZ^\w)$ is trivial. So we assume that $k\ge 2$. 

We claim that $c_k^B(\IZ^\w)\le c_k(\IR^m)$. Indeed, take any $k$-centerpole subset $C\subset\IR^\w$ of cardinality $|C|=c_k(\IR^m)$. By Lemma~\ref{l:sub}, the set $C$ is $k$-centerpole in the subgroup $\langle C\rangle\subset \IR^\w$ generated by $C$. 
Being a torsion-free finitely-generated abelian group, $\langle C\rangle$ is algebraically isomorphic to $\IZ^r$ for some $r\in\w$. Then 
$$c_k(\IZ^r)\le c_k(\langle C\rangle)\le|C|=c_k(\IR^m).$$ 

On the other hand, Lemma~\ref{l1:stab} ensures that
$$c_k(\IR^m)\le c_k(\IZ^m)=c_k^B(\IZ^m)=c_k^B(\IZ^\w).$$
Unifying these inequalities  we get
$$c_k^B(\IZ^\w){\le} c_k^B(\IZ^r){=}c_k(\IZ^r){\le} c_k(\IR^m){\le} c_k(\IR^n\times\IZ^{m-n}){\le} c_k(\IZ^m){=}c_k^B(\IZ^m){=}c_k^B(\IZ^\w),$$ which implies the desired equality $c_k(\IR^n\times\IZ^{m-n})=c_k^B(\IZ^\w)$.
\end{proof}

\section{Proof of Theorem~\ref{t:bounds}}\label{s:t:bounds}

1. The upper bound $c_k(\IZ^n)\le c_k(\IZ^k)\le 2^k-1-\max_{s\le k-2}\binom{k-1}{s-1}$ for $k\le n$ follows from Theorem~\ref{sandwich}.

2. By Proposition~\ref{p1} and Theorem~\ref{t5}(7), 
$c_n(\IZ^n)\ge c_n(\IR^n)\ge c_n^B(\IR^n)\ge t(\IR^n)\ge\frac12(n^2+3n-4).$
\smallskip

For technical reasons, first we prove the statement (4) of Theorem~\ref{t:bounds} and after that return back to the statement (3).
\smallskip

4. Let $1\le k\le m\le\w$ be two numbers. We need to prove that $c_k^B(\IR^m)<c_{k+1}^B(\IR^{m+1})$ and  $c_k(\IR^m)<c_{k+1}(\IR^{m+1})$. 

First we assume that $m$ is finite. The strict inequality $c_k^B(\IR^m)<c_{k+1}^B(\IR^{m+1})$ will follow as soon as we show that any subset $C\subset\IR^{m+1}$ of cardinality $|C|\le c_k^B(\IR^{m})$ fails to be $(k+1)$-centerpole for Borel colorings of $\IR^{m+1}$. If $C$ is a singleton, then it is not $(k+1)$-centerpole since $c_{k+1}^B(\IR^{m+1})\ge c_2^B(\IR^{m+1})\ge 3$ by (the proof of) Proposition 4.1 in \cite{BDR}. So, $C$ contains two distinct points $a,b$. Let $L=\IR\cdot (a-b)\subset\IR^{m+1}$ be the linear subspace generated by the vector $a-b$.  Write the space $\IR^{m+1}$ as the direct sum $\IR^{m+1}=H\oplus L$ where $H$ is a linear $m$-dimensional subspace of $\IR^{m+1}$ and consider the projection $\pr:\IR^{m+1}\to H$ whose kernel is equal to $L$. Since $\pr(a)=\pr(b)$, the projection of the set $C$ onto the subspace $H$ has cardinality
$|\pr(C)|<|C|\le c_k^B(\IR^m)=c_k^B(H)$ and hence $\pr_H(C)$ is not $k$-centerpole for Borel $k$-colorings of the group $H$. Consequently, there is a Borel $k$-coloring $\chi:H\to k$ such no monochromatic unbounded subset of $H$ is symmetric with respect to a point $c\in\pr(C)$.

 For a real number $\gamma\in\IR$, consider the half-line $L^+_\gamma=\{t(a-b):t\ge\gamma\}$ of $L$. Since the subset $C\subset \IR^{m+1}$ is finite, there is $\gamma\in\IR$ such that $C\subset H+L^+_\gamma$.

Now define a Borel $(k+1)$-coloring $\tilde\chi:H\oplus L\to k+1=\{0,\dots,k\}$ by the formula
$$\tilde\chi(x)=\begin{cases}
\chi(\pr(x)),&\mbox{if $x\in H+L_\gamma^+$},\\
k,&\mbox{otherwise}.
\end{cases}
$$
It can be shown that this coloring witnesses that $C$ is not $(k+1)$-centerpole for Borel colorings of $\IR^{m+1}=H\oplus L$.

Now assume that the number $m$ is infinite. Then for the finite number $r=\max\{rc_k^B(\IR^\w),rc_{k+1}^B(\IR^\w)\}$ we get $c_k^B(\IR^r)=c_k^B(\IR^\w)$ and $c_{k+1}^B(\IR^{r+1})=c_{k+1}^B(\IR^\w)$ by the stabilization Lemma~\ref{l1:stab}. Since $r$ is finite, the case considered above guarantees that
$$c_k^B(\IR^m)=c_k^B(\IR^m)=c_k^B(\IR^r)<c_{k+1}^B(\IR^{r+1})=c_{k+1}^B(\IR^\w)=c_{k+1}(\IR^{m+1}).$$

By analogy we can prove the strict inequality $c_k(\IR^m)<c_k(\IR^{m+1})$. 
\smallskip

3. Now we are able to prove the lower bound $c_k^B(\IR^\w)\ge k+4$ from the statement (3) of Theorem~\ref{t:bounds}. By the preceding item, $c^B_{k+1}(\IR^\w)\ge 1+c_{k}^B(\IR^\w)$ for all $k\in\IN$. By induction, we shall show that $c_k^B(\IR^\w)\ge k+4$ for all $k\ge 4$. For $k=4$ the inequality $c_4^B(\IR^\w)\ge 8\ge 4+4$ was proved in Lemma~\ref{l:c4>8}. Assuming that $c_k^B(\IR^\w)\ge k+4$ for some $k\ge 4$, we conclude that $c_{k+1}^B(\IR^\w)>c_k^B(\IR^\w)\ge k+4$ and hence $c_{k+1}^B(\IR^\w)\ge (k+1)+4$.

Now we see that for every $n\ge k\ge 4$ we have the desired lower bound:
$$c_k^B(\IR^n)\ge c_k^B(\IR^\w)\ge k+4.$$

5. Let $k\in\IN$ and $n,m\in\w\cup\{\w\}$ be numbers with $1\le k\le n+m$. We need to prove that $c_k^B(\IR^n\times \IZ^m)<c_{k+1}^B(\IR^n\times\IZ^{m+1})$ and  $c_k(\IR^n\times \IZ^m)<c_{k+1}(\IR^n\times\IZ^{m+1})$. According to the Stabilization Lemma~\ref{l1:stab}, it suffices to consider the case of finite numbers $n,m$.

First we prove the inequality $c_k^B(\IR^n\times \IZ^m)<c_{k+1}^B(\IR^n\times\IZ^{m+1})$.
We need to show that each subset $C\subset \IR^n\times\IZ^{m+1}$ of cardinality $|C|\le c_k^B(\IR^n\times\IZ^m)$ is not $(k+1)$-centerpole in $\IR^n\times\IZ^{m+1}$ for Borel colorings. We shall identify $\IR^n\times\IZ^{m+1}$ with the direct sum $\IR^n\oplus\IZ^{m+1}$. Since $k\le n+m$, Theorem~\ref{sandwich} implies that the numbers $|C|\le c_k^B(\IR^n\times\IZ^m)\le c_k(\IZ^{n+m})\le c_k(\IZ^k)$ all are finite.
\smallskip

Three cases are possible:
\smallskip

(i) $|C|\le 1$. In this case we can assume that $C=\{0\}$ and take any coloring $\chi:\IR^n\oplus\IZ^{m+1}\to k+1$ such that the color of each non-zero element 
$x\in\IR^n\times\IZ^{m+1}$ differs from the color of $-x$. This coloring witnesses that $C$ is not $(k+1)$-centerpole in $\IR^n\times\IZ^{m+1}$.
\smallskip

(ii) $|C|>1$ and $C\subset z+\IR^n$ for some $z\in\IZ^{m+1}$. Without lose of generality, $z=0$ and hence $C\subset \IR^n$. Take two distinct points $a,b\in C$ and consider the 1-dimensional linear subspace $L=\IR\cdot(a-b)\subset\IR^n$ generated by the vector $a-b$. Write the space $\IR^n$ as the direct sum $\IR^n=L\oplus H$ where $H$ is a linear $(n-1)$-dimensional subspace of $\IR^n$ and consider the projection $\pr:\IR^n\oplus\IZ^{m+1}\to H\oplus\IZ^{m+1}$ whose kernel is equal to $L$. Since $\pr(a)=\pr(b)$, the projection of the set $C$ onto the subgroup $H\oplus\IZ^{m+1}$ of $\IR^n\oplus\IZ^{m+1}$ has cardinality
$$|\pr(C)|<|C|\le c_k^B(\IR^n\times\IZ^m)\le c_k^B(\IR^{n-1}\times\IZ^{m+1})=c_k^B(H\oplus\IZ^{m+1})$$ and hence $\pr_H(C)$ is not $k$-centerpole for Borel colorings of the group $H\oplus \IZ^{m+1}$. Consequently, there is a Borel $k$-coloring $\chi:H\oplus\IZ^{m+1}\to k$ such no monochromatic unbounded subset of $H\oplus\IZ^{m+1}$ is symmetric with respect to a point $c\in\pr(C)$.

 For a real number $\gamma\in\IR$, consider the half-line $L^+_\gamma=\{t(a-b):t\ge\gamma\}$ of $L$. Since the subset $C\subset \IR^n\oplus\IZ^{m+1}=H\oplus L\oplus\IZ^{m+1}$ is finite, there is $\gamma\in\IR$ such that $C\subset H+L^+_\gamma+\IZ^{m+1}$.

Now define a Borel $(k+1)$-coloring $\tilde\chi:H\oplus L\oplus\IZ^{m+1}\to k+1=\{0,\dots,k\}$ by the formula
$$\tilde\chi(x)=\begin{cases}
\chi(\pr(x)),&\mbox{if $x\in H+L_\gamma^++\IZ^{m+1}$},\\
k,&\mbox{otherwise}.
\end{cases}
$$
It can be shown that this coloring witnesses that $C$ is not $(k+1)$-centerpole for Borel colorings of $\IR^n\oplus\IZ^{m+1}=H\oplus L\oplus\IZ^{m+1}$. 
\smallskip

(iii) The set $C\subset\IR^n\oplus\IZ^{m+1}$ contains two points $a,b$ whose projections on the subspace $\IZ^{m+1}$ are distinct. Without loss of generality, the projections of $a,b$ on the last coordinate are distinct. Then the 1-dimensional subspace $L=\IR\cdot(a-b)$ of $\IR^n\times\IR^{m+1}$ meets the subspace $\IR^n\oplus \IR^m$ and hence $\IR^n\oplus\IR^{m+1}$ can be identified with the direct sum $\IR^n\oplus\IR^m\oplus L$. 
 Let $\pr:\IR^n\times\IR^{m+1}\to\IR^n\times\IR^m$ be the projection whose kernel coincides with $L$. Since $\pr$ is an open map, the image $H=\pr(\IR^n\times\IZ^{m+1})$ is a locally compact (and hence closed) subgroup of $\IR^n\times\IR^m$, which can be written as the countable union of shifted copies of the space $\IR^n$. By Theorem 6 of \cite{Morris}, $H$ is topologically isomorphic to $\IR^n\times\IZ^m$. 
It follows from the definition of $H$ that $\IR^n\oplus\IZ^{m+1}\subset H\oplus L$.

Since $\pr(a)=\pr(b)$, the projection of the set $C$ has cardinality $|\pr(C)|<|C|\le c_k^B(\IR^n\oplus\IZ^m)=c_k^B(H)$, which means that $\pr(C)$ is not $k$-centerpole for Borel colorings of $H$.
Consequently, there is a Borel $k$-coloring $\chi:H\to k$ such no monochromatic unbounded subset of $H$ is symmetric with respect to a point $c\in\pr(C)$.

 For a real number $\gamma\in\IR$, consider the half-line $L^+_\gamma=\{t(a-b):t\ge\gamma\}$ of $L$. Since the subset $C\subset H\oplus L$ is finite, there is $\gamma\in\IR$ such that $C\subset H+L^+_\gamma$.

Now define a Borel $(k+1)$-coloring $\tilde\chi:H\oplus L\to k+1$ by the formula
$$\tilde\chi(x)=\begin{cases}
\chi(\pr(x)),&\mbox{if $x\in H+L_\gamma^+$},\\
k,&\mbox{otherwise}.
\end{cases}
$$
It can be shown that this coloring witnesses that $C$ is not $(k+1)$-centerpole for Borel colorings of $H\oplus L\supset \IR^n\oplus\IZ^{m+1}$. 
\smallskip

After considering these three cases, we can conclude that $c_{k+1}^B(\IR^n\times\IZ^{m+1})>c_k^B(\IR^n\times\IZ^m)$.
\smallskip

Deleting the adjective ``Borel'' from the above proof, we get the proof of the strict inequality    $$c_{k}(\IR^n\times\IZ^{m})<c_{k+1}(\IR^n\times\IZ^{m+1}).$$

\section{Proof of Theorem~\ref{t:exact}}\label{Pf:exact}

In this section we prove Theorem~\ref{t:exact}.
Let $k,n,m$ are cardinals.
 We shall use known upper bounds for the numbers $c_k(\IZ^n)$, lower bounds for $t(\IR^n)$ and the inequality
$$t(\IR^{n+m})\le c_k^B(\IR^{n+m})\le c_k^B(\IR^n\times\IZ^m)\le c_k(\IR^n\times\IZ^m)\le c_k(\IZ^m)$$
established in Proposition~\ref{p1}.
\smallskip

1. Assume that $n+m\ge 1$. Since each singleton is 1-centerpole for (Borel) colorings of the group $\IR^n\times\IZ^m$, we conclude that $c_1(\IR^n\times\IZ^m)=c_1^B(\IR^n\times\IZ^m)=1$.
\smallskip

2. Assume that $n+m\ge 2$. The inequalities $3\le t(\IR^2)\le c_2^B(\IR^2)\le c_2(\IZ^2)\le 3$ follow from  Theorem~\ref{sandwich}, \ref{t5}(2) and Proposition~\ref{p1}.

We claim that $c_2^B(\IR^\w)\ge 3$. Assuming that $c_2^B(\IR^\w)<3$ we conclude that $rc_k^B(\IR^\w)\le c_2^B(\IR^\w)-1\le 1$. Then by the Stabilization Lemma~\ref{l1:stab}, we get that $c_2(\IR^1)=c_2(\IR^\w)$ is finite. On the other hand, the real line has the 2-coloring $\chi:\IR\to 2$, $\chi^{-1}(1)=(0,\infty)$,  without unbounded monochromatic symmetric subsets. This coloring witnesses that $c_2(\IR^1)=\binfty$ and this is a contradiction. Therefore, 
$$3\le c_2^B(\IR^\w)\le c_2^B(\IR^n\times\IZ^{m-n})\le c_2^B(\IR^n\times\IZ^{m-n})\le c_2(\IZ^2)=3.$$
\smallskip

3. Assume that $n+m\ge 3$. Lemma~\ref{l:c3=6} and Theorem~\ref{sandwich} imply the inequalities 
$$6\le c_3^B(\IR^m)\le c_3^B(\IR^n\times\IZ^{m-n})\le c_3^B(\IR^n\times\IZ^{m-n})\le c_3(\IZ^3)=6$$
that turn into equalities.
\smallskip

4. Assume that $n+m=4$. Theorem~\ref{sandwich}, \ref{t5}(4) and Proposition~\ref{p1} imply the inequalities $$12\le t(\IR^4)\le c_4^B(\IR^4)\le  c_4^B(\IR^n\times\IZ^m)\le c_4(\IR^n\times\IZ^m)\le c_4(\IZ^4)\le 12,$$which actually are equalities. 
\smallskip

5. We need to prove that $c_k^B(\IR^n\times\IZ^m)=\binfty$ if $k\ge n+m+1<\w$.
This equality will follow as soon as we check that $c_k^B(\IR^{n+m})=\binfty$. 
Let $\Delta$ be a simplex in $\IR^{n+m}$ centered at the origin. Write the boundary $\partial \Delta$ as the union $\partial \Delta=\bigcup_{i=0}^{n+m}\Delta_i$ of its facets. Define a Borel $k$-coloring $\chi:\IR^n\to\{0,\dots,n+m\}\subset k$ assigning to each point $x\in\IR^n\setminus\{0\}$ the smallest number $i\le n+m$ such that the ray $\IR_+\cdot x$ meets the facet $\Delta_i$. Also put $\chi(0)=0$. It is easy to check that the coloring $\chi$ witnesses that the set $\IR^{n+m}$ is not $k$-centerpole for Borel colorings of $\IR^{n+m}$ and consequently, $c_k^B(\IR^{n+m})=\binfty$.
\smallskip

6. Assuming that $k\ge n+m+1$, we shall show that $c_k(\IR^n\times\IZ^m)=\binfty$. If $n+m$ is finite, then this follows from the preceding item. So, we assume that $n+m$ is infinite. Then the group $G=\IR^n\times\IZ^m$ has cardinality $2^{n+m}$. By Theorem 4 of \cite{BP}, for the group $G$ endowed with the discrete topology, we get $\nu(G)=\log |G|=\min\{\gamma:2^\gamma\ge |G|\}\le n+m\le k$, which means that $G$ admits a $k$-coloring without infinite monochromatic symmetric subset. This implies that the set $G$ is not $k$-centerpole in $G$ and thus $c_k(G)=\binfty$. 
\smallskip

7. Assume that $n+m\ge \w$ and $\w\le k<\cov(\M)$. The lower bound from Theorem~\ref{t:bounds}(3) implies that $\w\le c_k^B(\IR^\w)\le c_k^B(\IZ^\w)$. The upper bound $c_\kappa^B(\IZ^\w)\le\w$ will follow as soon as we check that each countable dense subset $C\subset\IZ^\w$ is $\kappa$-centerpole for Borel colorings of $\IZ^\w$. Let
$\chi:\IZ^\w\to\kappa$ be a Borel $\kappa$-coloring of $\IZ^\w$. Taking into account that $\IZ^\w=\bigcup_{i\in\kappa}\chi^{-1}(i)$ is homeomorphic to a dense $G_\delta$-subset of the real line, we conclude that for some color $i\in\kappa$ the preimage $A=\chi^{-1}(i)$ is not meager in $\IZ^\w$. Being a Borel subset of $\IZ^\w$, the set $A$ has the Baire property, which means that for some open subset $U\subset\IZ^\w$ the symmetric difference $A\triangle U$ is meager in $\IZ^\w$. Since $A$ is not meager, the set $U$ is not empty. Take any point $c\in U\cap C$ and observe that $V=U\cap (2c-U)$ is an open symmetric neighborhood of $c$. It 
follows that for the set $B=A\cap (2c-A)$ the symmetric difference $B\triangle V$ is meager. Since $V$ is not meager in $\IZ^\w$, the set $B$ is not meager and hence is unbounded in $\IZ^\w$ (since totally bounded subsets of $\IZ^\w$ are nowhere dense in $\IZ^\w$). Now we see that $B=A\cap (2c-A)$ is a monochromatic unbounded subset, symmetric with respect to the point $c$, witnessing that  the set $C$ is $\w$-centerpole for Borel coloring of $\IZ^\w$.

\section{Proof of Theorem~\ref{t:ckG}}\label{s:Pf:lc}

Let $k\ge 2$ be a finite cardinal  number and $G$ be an abelian $\LLC$-group with totally bounded Boolean subgroup $G[2]$ and ranks $n=r_\IR(G)$ and $m=r_\IZ(G)$. Let $\bar G$ be the completion of the group with respect to its (two-sided) uniformity.

First we give a proof the statements (3) and (4) of Theorem~\ref{t:ckG} holding under the additional assumption of the metrizability of the group $G$.

Since $c_k^B(\IR^n\times\IZ^{m-n})<\w$ iff $k\le m$, the Borel version of Theorem~\ref{t:ckG} will follow as soon as we prove two inequalities:

1) $c_k^B(G)\le c_k^B(\IR^n\times\IZ^{m-n})$ if $k\le m$ and
 
2) $c_k^B(\IR^n\times\IZ^{m-n})\le c_k^B(G)$ if $c_k^B(G)$ is finite.
\smallskip

1. Assume that $k\le m$. If the $\IZ$-rank $m=r_\IZ(G)$ is finite, then so is the $\IR$-rank $n=r_\IR(G)$ and we can find copies of the topological groups $\IR^n$ and $\IZ^m$ in $G$. Now consider the closure $H$ of the subgroup $\IR^n+\IZ^m$ in $G$. Since $G$ is an $\LLC$-group and $\IR^n+\IZ^m$ contains a dense finitely generated subgroup, the group $H$ is locally compact.
By the structure theorem of locally compact abelian groups \cite[Theorem 25]{Morris}, $H$ is topologically isomorphic to $\IR^r\oplus Z$ for some $r\in\w$ and a closed subgroup $Z\subset H$ that contains an open compact subgroup $K$. It follows from the inclusion $\IR^n\subset H$ that $n\le r$. On the other hand, $r\le r_\IZ(G)=n$. By the same reason, $r_\IZ(H)=m=r_\IZ(G)$. In particular, $r_\IZ(Z)=m-n$ and hence $H$ contains an isomorphic copy of the group $\IR^n\times\IZ^{m-n}$.  
Now we see that $r_k^B(G)\le r_k^B(\IR^n\times\IZ^{m-n})$.
\smallskip

Next, assume that the $\IZ$-rank $m=r_\IZ(G)$ is infinite but $n=r_\IR(G)$ is finite. By the Stabilization Lemma~\ref{l1:stab}, $c_k^B(\IR^n\times\IZ^{m-n})=c_k^B(\IR^n\times\IZ^\w)=c_k^B(\IR^n\times\IZ^{r-n})$ for $r=rc^B_k(\IR^n\times\IZ^\w)\le c_k^B(\IR^n\times\IZ^\w)<\binfty$. Repeating the above argument we can find a copy of the group $\IR^n\oplus \IZ^{s-n}$ in $G$ for some finite $s\ge r$ and conclude that $c_k^B(G)\le c_k^B(\IR^n\times\IZ^{s-n})\le c_k^B(\IR^n\times\IZ^{r-s})=c_k^B(\IR^n\times\IZ^{m-n})$. 
\smallskip

Finally, assume that the $\IR$-rank $n=r_\IR(G)$ is infinite. Then $c_k^B(\IR^n\times\IZ^{m-n})=c_k^B(\IR^\w)=c_k^B(\IR^r)$ for $r=rc_k^B(\IR^\w)\le c_k^B(\IR^\w)<\w$. By the definition of the $\IR$-rank $r_\IR(G)=n=\w$, we can find a copy of the group $\IR^r$ in $G$ and conclude that $c_k^B(G)\le c_k^B(\IR^r)=c_k^B(\IR^n\times\IZ^{m-n})$. This completes the proof of the inequality $c_k^B(G)\le c_k^B(\IR^n\times\IZ^{m-n})$.

Deleting the adjective ``Borel'' from the above proof we get the proof of the inequality $c_k(G)\le c_k(\IR^n\times\IZ^{m-n})$ holding for each $k\le m$.
\smallskip

2. Now assuming that $c_k^B(G)$ is finite and the group $G$ is metrizable, we prove the inequality $c_k^B(\IR^n\times\IZ^{m-n})\le c_k^B(G)$.

Fix a $k$-centerpole subset $C\subset G$ for Borel colorings of $G$ with cardinality $|C|=c_k^B(G)$. The subgroup $G[2]$ is totally bounded and hence has compact closure $K_2$ in the completion $\bar G$ of the group $G$. It follows that $K_2\subset\bar G[2]$. Since $G$ is an $\LLC$-group, the finitely-generated subgroup $\langle C\rangle$ has locally compact closure $\overline{\langle C\rangle}$ in $G$. It follows from the compactness of the subgroup $K_2$ that the sum $H=\overline{\langle C\rangle}+K_2$ is a locally compact subgroup of $\bar G$. This subgroup is compactly generated because it contains a dense subgroup generated by the compact set $C+K_2$. 

By the Structure Theorem for compactly generated locally compact abelian groups \cite[Theorem 24]{Morris}, $H$ is topologically isomorphic to $\IR^r\oplus \IZ^{s-r}\oplus K$ for some compact subgroup $K$ that contains all torsion elements of $H$. In particular, $K_2\subset K$. Now we see that the subgroup $2H=\{2x:x\in H\}$ is closed in $H$ and consequently, the subgroup $2H\cap G$ is closed in $G$. 
The group $G$ is metrizable and so is the quotient group $G/2H$. Then the subspace $X=(G/2H)\setminus(H/2H)$ is metrizable and thus paracompact. Since $H\supset G[2]$ we can apply Lemma~\ref{l:subB} and conclude that the set $C$ is $k$-centerpole for Borel colorings of the subgroup $H\cap G$. Since $H\cap G\subset H$, the set $C$ is $k$-centerpole for Borel colorings of the group $H$.

The compactness of the subgroup $K\subset H$ implies that the image $q(C)$ of $C$ under the quotient map $q:H\to H/K$ is a $k$-centerpole set for Borel colorings of the quotient group $H/K=\IR^r\times\IZ^{s-r}$. Since $H=\overline{\langle C\rangle}+K_2$ and $K_2\subset K$, we conclude that $\overline{\langle C\rangle}/(\overline{\langle C\rangle}\cap K)=q(\overline{\langle C\rangle})=H/K=\IR^r\times\IZ^{s-r}$ and hence $r\le n$ and $s\le m$. Consequently, $\IR^r\times\IZ^{s-r}\hookrightarrow \IR^n\times\IZ^{m-n}$ and $$c_k^B(\IR^n\times\IZ^{m-n})\subset c_k^B(\IR^r\times\IZ^{s-r})=c_k^B(H/K)\le|C|=c_k^B(G).$$

Deleting the adjective ``Borel'' from the above proof and applying Lemma~\ref{l:sub} instead of Lemma~\ref{l:subB}, we get the proof of the inequality
$c_k(\IR^n\times\IZ^{m-n})\le c_k(G)$ under the assumption that the number $c_k(G)$ is finite. Since Lemma~\ref{l:sub} does not require the metrizability of $G$, this upper bound hold without this assumption.

\section{Proof of Proposition~\ref{p:rc}}\label{s:p:rc}

Let $G$ be a metrizable abelian $\LLC$-group with totally bounded Boolean subgroup $G[2]$ and $k\in\IN$ be  such that $2\le k\le r_\IZ(G)$.
Theorems~\ref{t:ckG} and \ref{t:bounds} guarantee that $c_k^B(G)=c_k^B(\IR^n\times\IZ^{m-n})<\binfty$ where $n=r_\IZ(G)$ and $m=r_\IZ(G)$.

Let $r=rc_k(G)$ and $C\subset G$ be a subset of cardinality $|C|=c_k^B(G)$ such that $r_\IZ(\langle C\rangle)=r$. Without loss of generality, $0\in C$.
Since $G$ is an $\LLC$-group, the finitely generated subgroup $\langle C\rangle$ has locally compact closure in $G$. 

The totally bounded Boolean subgroup $G[2]$ has compact closure $K_2$ in the completion $\bar G$ of the abelian topological group $G$. It follows that the subgroup $H=\overline{\langle C\rangle}+K_2$ of $\bar G$ is locally compact and  compactly generated. Consequently, it contains a compact subgroup $K\supset K_2$ such that the quotient group $H/K$ is topologically isomorphic to $\IR^s\times\IZ^{r-s}$ for some $r\le s$. It follows from Lemma~\ref{l:sub} that the set $C$ is $k$-centerpole for Borel colorings of the group $H$. The compactness of the subgroup $K\subset H$ implies that the image $q(C)\subset H/K$ of $C$ 
under the quotient homomorphism $q:H\to H/K$ is a $k$-centerpole set for Borel colorings of $H/K$. Consequently, $$c_k^B(\IR^r)\le c_k^B(\IR^s\times\IZ^{r-s})=c_k^B(H/K)\le|q(C)|\le|C|=c_k^B(G)<\binfty$$ and hence $r\ge k$ by Theorem~\ref{t:bounds}(5). 

Now assume that $k\ge 4$. Since the set $q(C)$ is $k$-centerpole for Borel colorings of $H/K=\IR^s\times\IZ^{r-s}\subset\IR^r$, Lemma~\ref{l:afhull} implies that the affine hull
of $q(C)$ in the linear space $\IR^r$ has dimension $\le |q(C)|-3$. Since $0\in q(C)$, the affine hull of the set $q(C)$ coincides with its linear hull. Consequently, $r=r_\IZ(\langle C\rangle)=r_\IZ(\langle q(C)\rangle)\le |q(C)|-3\le|C|-3=c_k^B(G)-3$. 
This completes the proof of the lower and upper bounds
$$k\le rc_k(G)\le c_k^B(G)-3$$for all $k\ge 3$.

Next,we show that $rc_k(G)=k$ for $k\in\{2,3\}$. In this case $c_k^B(G)=c_k(\IZ^k)$ by Theorems~\ref{t:ckG} and \ref{t:exact}. Since $r_\IZ(G)\ge k$, the group $G$ contains an isomorphic copy of the group $\IZ^k$. Then each $k$-centerpole subset $C\subset \IZ^k\subset G$ with $|C|=c_k(\IZ^k)$ is $k$-centerpole for Borel colorings of $G$ and thus $k\le rc_k^B(G)\le r_\IZ(\langle C\rangle)\le k$, which implies the desired equality $rc_k^B(G)=k$.

\section{Proof of Stabilization Theorem~\ref{t:stab}}\label{s:t:stab}

Let $k\ge2$ and $G$ be an abelian $\LLC$-group with totally bounded Boolean subgroup $G[2]$. Let $n=r_\IR(G)$ and $m=r_\IZ(G)$. 
\smallskip

1. Assume that $m=r_\IZ(G)\ge rc_k^B(\IZ^\w)$. By Proposition~\ref{p:rc}, $k\le rc_k^B(\IZ^\w)\le r_\IZ(G)$ and then $c_k(G)=c_k(\IR^n\times\IZ^{m-n})$ by Theorem~\ref{t:ckG}.
Since $m=r_\IZ(\IR^n\times\IZ^{m-n})\ge rc_k^B(\IZ^\w)$, Lemma~\ref{l2:stab} guarantees that $c_k(G)=c_k^B(\IR^n\times\IZ^{m-n})=c_k^B(\IZ^\w)$.
\smallskip

2. Assume that the group $G$ is metrizable and $r_\IZ(G)\ge rc_k^B(\IR^n\times\IZ^\w)$. By Proposition~\ref{p:rc}, 
$k\le rc_k^B(\IR^n\times\IZ^\w)\le r_\IZ(G)= m$ and hence $c_k^B(G)=c_k^B(\IR^n\times\IZ^{m-n})$ by Theorem~\ref{t:ckG}.
Since $m=r_\IZ(\IR^n\times\IZ^{m-n})\ge rc_k^B(\IR^n\times\IZ^\w)$, Lemma~\ref{l1:stab} guarantees that $c_k^B(G)=c_{\IZ}^B(\IR^n\times\IZ^{m-n})=c_k^B(\IR^n\times\IZ^\w)$.
\smallskip

3. By analogy with the preceding case we can prove that 
$c_k^B(G)=c_k^B(\IR^\w)$ if $G$ is metrizable and $r_\IR(G)\ge rc^B_k(\IR^\w)$.

\section{Selected Open Problems}\label{s:OP}

By Theorem~\ref{t:exact}, $c_k^B(\IR^\w)=c_k(\IZ^\w)=c_k(\IZ^k)$ for all $k\le 4$.

\begin{problem} Is $c_k(\IZ^\w)=c_k(\IZ^k)$ for all $k\in\IN$? In particular, is $c_4(\IZ^n)=12$ for every $n\ge 4$?
\end{problem}

\begin{problem} Is $c_k^B(\IR^n)=c_k(\IR^n)$ for every $k\le n$?
\end{problem}

Theorem~\ref{t:bounds} gives an upper and lower bounds for the numbers $c_k(\IZ^k)$ that have exponential and polynomial growths, respectively.

\begin{problem} Is the growth of the sequence $\big(c_n(\IZ^n)\big)_{n\in\IN}$ exponential? 
\end{problem}

By \cite{Ba}, for every $k\in\{1,2,3\}$ any $k$-centerpole subset $C\subset\IZ^k$ of cardinality $|C|=c_k(\IZ^k)$ is affinely equivalent to the $\binom{k-1}{k-3}$-sandwich $\Xiup^{k-1}_{k-3}$.

\begin{problem} Is each 12-element 4-centerpole subset of $\IZ^4$ affinely equivalent to the $\binom{3}{1}$-sandwich $\Xiup^3_1$?
\end{problem}

It follows from the proof of Theorem 1 in \cite{Gr} that the free group $F_2$ with two generators and discrete topology has $c_2(F_2)\le 13$.

\begin{problem} What is the value of the cardinal $c_2(F_2)$? Is $c_3(F_2)$ finite?
\end{problem}

The last problem can be posed in a more general context.

\begin{problem} Investigate the cardinal characteristics $c_k(G)$ and $c_k^B(G)$ for non-commutative topological groups $G$.
\end{problem}

\end{document}